\documentclass{article}

\usepackage[english]{babel}

\usepackage[letterpaper,top=2cm,bottom=2cm,left=3cm,right=3cm,marginparwidth=1.75cm]{geometry}

\usepackage[applemac]{inputenc} 		
\usepackage[T1]{fontenc}    		
\usepackage[colorlinks]{hyperref}      
\usepackage{url}            			
\usepackage{amsthm}
\usepackage{booktabs}       		
\usepackage{amsfonts}       		
\usepackage{amsmath}
\usepackage{wrapfig}
\usepackage{nicefrac}       		
\usepackage{microtype}      		
\usepackage{lipsum}				
\usepackage[square,numbers]{natbib}
\usepackage{mathtools}
\usepackage{algorithm}
\usepackage{algorithmicx}
\usepackage{algpseudocode}
\usepackage{graphicx}
\usepackage[most]{tcolorbox}
\usepackage{multicol,multirow}
\usepackage{indentfirst,latexsym,bm}
\usepackage{amsmath}
\usepackage{subfigure}
\usepackage{amssymb}
\usepackage{xcolor}
\usepackage{comment}
\usepackage{enumitem}
\usepackage{bbm}
\usepackage{tikz}
\usepackage{mdframed}
\usepackage{nicematrix}
\usepackage{bbding}
\usepackage{pifont}
\usepackage{stmaryrd}

\usetikzlibrary{arrows.meta,positioning}
\usepackage{pgfplots}
\pgfplotsset{compat=newest}
\usepgfplotslibrary{fillbetween}
\usetikzlibrary{shapes,decorations}
\usetikzlibrary{fit}

\colorlet{color1}{blue}
\colorlet{color2}{red!50!black}

\definecolor{ivory}{RGB}{218,215,203}

\definecolor{cuhkp}{RGB}{98,56,105} 	
\definecolor{cuhkpl}{RGB}{152,24,147} 	
\definecolor{cuhkb}{RGB}{219,160,1} 	
\definecolor{cuhkbd}{RGB}{178,129,0} 	
\definecolor{cuhkr}{RGB}{88,35,155}  	
\definecolor{blackp}{RGB}{0,0,0} 
\definecolor{redp}{RGB}{255,0,0}
\definecolor{orangep}{RGB}{255,128,0}
\definecolor{brownp}{RGB}{128,77,0}
\definecolor{yellowp}{RGB}{255,230,0}
\definecolor{greenp}{RGB}{128,230,0}
\definecolor{bluep}{RGB}{0,128,255}
\definecolor{purplep}{RGB}{152,24,147}
\definecolor{pinkp}{RGB}{230,0,128}
\definecolor{lavender}{rgb}{0.9, 0.9, 0.98}

\usepackage{hyperref}[6.83]

\hypersetup{
    colorlinks=true, 
    linkcolor=blue!80!black,  
    citecolor=blue!80!black, 
    urlcolor=magenta 
}

\RequirePackage[capitalize,nameinlink]{cleveref}

\crefformat{equation}{\textup{#2(#1)#3}}
\crefrangeformat{equation}{\textup{#3(#1)#4--#5(#2)#6}}
\crefmultiformat{equation}{\textup{#2(#1)#3}}{ and \textup{#2(#1)#3}}
{, \textup{#2(#1)#3}}{, and \textup{#2(#1)#3}}
\crefrangemultiformat{equation}{\textup{#3(#1)#4--#5(#2)#6}}%
{ and \textup{#3(#1)#4--#5(#2)#6}}{, \textup{#3(#1)#4--#5(#2)#6}}{, and \textup{#3(#1)#4--#5(#2)#6}}

\Crefformat{equation}{#2Equation~\textup{(#1)}#3}
\Crefrangeformat{equation}{Equations~\textup{#3(#1)#4--#5(#2)#6}}
\Crefmultiformat{equation}{Equations~\textup{#2(#1)#3}}{ and \textup{#2(#1)#3}}
{, \textup{#2(#1)#3}}{, and \textup{#2(#1)#3}}
\Crefrangemultiformat{equation}{Equations~\textup{#3(#1)#4--#5(#2)#6}}%
{ and \textup{#3(#1)#4--#5(#2)#6}}{, \textup{#3(#1)#4--#5(#2)#6}}{, and \textup{#3(#1)#4--#5(#2)#6}}

\crefdefaultlabelformat{#2\textup{#1}#3}

\theoremstyle{plain}
\newtheorem{theorem}{Theorem}[section]
\newtheorem{lemma}[theorem]{Lemma}
\newtheorem{corollary}[theorem]{Corollary}
\newtheorem{proposition}[theorem]{Proposition}
\newtheorem{definition}[theorem]{Definition}
\newtheorem{remark}[theorem]{Remark}

\theoremstyle{definition}

\newtheorem{example}[theorem]{Example}
\newtheorem{assumption}[theorem]{Assumption}
\crefname{assumption}{Assumption}{Assumptions}
\Crefname{assumption}{Assumption}{Assumptions}

\theoremstyle{remark}

\newtheorem*{fact*}{Fact}

\DeclareMathOperator*{\argmax}{argmax}
\DeclareMathOperator*{\argmin}{argmin}

\newcommand{\intset}[1]{\llbracket #1 \rrbracket}

\newcommand{\cL}{\mathcal L}
\newcommand{\cC}{\mathcal C}
\newcommand{\R}{\mathbb{R}}
\newcommand{\N}{\mathbb{N}}
\newcommand{\Rd}{\mathbb{R}^d}
\newcommand{\Prob}{\mathbb{P}}
\newcommand{\Exp}{\mathbb{E}}

\newcommand{\dist}{\mathrm{dist}}
\newcommand{\crit}{\mathrm{crit}}
\newcommand{\sL}{{\sf L}}
\newcommand{\sG}{{\sf G}}
\newcommand{\sA}{{\sf A}}
\newcommand{\sB}{{\sf B}}
\newcommand{\sC}{{\sf C}}
\newcommand{\sD}{{\sf D}}

\newcommand{\cO}{\mathcal O}
\newcommand{\cA}{\mathcal A}
\newcommand{\cB}{\mathcal B}
\newcommand{\cG}{\mathcal G}
\newcommand{\cF}{\mathcal F}
\newcommand{\cE}{\mathcal E}

\newcommand{\cX}{\mathcal{X}}
\newcommand{\cY}{\mathcal{Y}}
\newcommand{\cZ}{\mathcal{Z}}
\newcommand{\cD}{\mathcal{D}_{h}}

\newcommand{\err}{\mathcal{E}}

\newcommand{\dom}[1]{\mathrm{dom}(#1)}
\newcommand{\idom}[1]{\mathrm{int\ dom}(#1)}
\newcommand{\diam}{\mathrm{diam}}
\newcommand{\iprod}[2]{\langle #1, #2 \rangle}

\title{Shuffling the Stochastic Mirror Descent via Dual Lipschitz Continuity and Kernel Conditioning}
\author{Junwen Qiu\thanks{Industrial Systems Engineering and Management, National University of Singapore
  \\ Email: \{\texttt{jwqiu@nus.edu.sg,leileimei@u.nus.edu,junyuz@nus.edu.sg}\}} \and Leilei Mei${}^*$ \and Junyu Zhang${}^*$ }
\begin{document}
\maketitle
\begin{abstract}
    The global Lipschitz smoothness condition underlies most convergence and complexity analyses via two key consequences: the descent lemma and the gradient Lipschitz continuity. How to study the performance of optimization algorithms in the absence of Lipschitz smoothness remains an active area. The relative smoothness framework from Bauschke--Bolte--Teboulle (2017) and Lu--Freund--Nesterov (2018) provides an extended descent lemma, ensuring convergence of Bregman-based proximal gradient methods and their vanilla stochastic counterparts. However, many widely used techniques (e.g., momentum schemes, random reshuffling, and variance reduction) additionally require the Lipschitz-type bound for gradient deviations, leaving their analysis under relative smoothness an open area. To resolve this issue, we introduce the dual kernel conditioning (DKC) regularity condition to regulate the local relative curvature of the kernel functions. Combined with the relative smoothness, DKC provides a dual Lipschitz continuity for gradients: even though the gradient mapping is not Lipschitz in the primal space, it preserves Lipschitz continuity in the dual space induced by a mirror map. We verify that DKC is widely satisfied by popular kernels and is closed under affine composition and conic combination. With these novel tools, we establish the first complexity bounds as well as the iterate convergence of random reshuffling mirror descent for constrained nonconvex relative smooth problems. 
\end{abstract}

\textbf{Keywords:} Nonconvex optimization, non-Euclidean distance, random reshuffling, dual kernel conditioning, dual Lipschitz continuity, complexity, last-iterate convergence

\maketitle
\section{Introduction}
\label{sec:intro}
Consider a nonconvex problem $\min_{x\in\mathcal{Z}} f(x)$ over a closed and convex set $\mathcal{Z}\subseteq\Rd$. A standard assumption for both deterministic and stochastic methods is the \emph{global Lipschitz smoothness} of $f$ over $\mathcal{Z}$, which yields two key properties that facilitate the convergence analysis:
\begin{subequations}
   \begin{align}
       \label{eq:traditional descent}
    |f(y) -  f(x) - \langle \nabla f(x),y-x \rangle| &\leq  \frac{\sL}{2}\,\|x-y\|^2,\qquad \forall x,y\in\mathcal{Z},\\[2mm]
    \label{eq:traditional Lipschitz bound}
    \|\nabla f(x) - \nabla f(y)\| &\leq \sL\|x-y\|, \!\qquad\quad \forall x,y\in\mathcal{Z}.
   \end{align}
\end{subequations}
However, the Lipschitz smoothness under $\ell_2$-distance may not properly capture the problem structure in many applications. For example, many reinforcement learning algorithms suffer much worse dimension-dependence if $\ell_2$-distance is used (instead of KL-divergence) \cite{wang2017primal,jin2020efficiently}, and the D-optimal design problem \cite{lu2018relatively} even fail to satisfy this condition. More examples include the quadratic inverse problem \cite{bolte2018first}, the Poisson inverse problem \cite{gao2021convergence}, and log-portfolio optimization \cite{fernholz2002stochastic}, etc.

To overcome the lack of a descent lemma \eqref{eq:traditional descent}, the concept of \emph{relative smoothness} was proposed in \cite{bauschke2017descent,bolte2018first,lu2018relatively}, from which the \emph{extended descent lemma} follows:
\begin{equation}
\label{eq:extended descent}
\big|f(x) - f(y) - \iprod{\nabla f(y)}{x-y}\big| \leq \sL \cD(x,y),
\end{equation}
where $\cD$ is the Bregman divergence induced by some kernel $h$, providing a mirror descent update:
\begin{equation}
    \label{eq:majorization}
    x^{k+1}=\argmin_z \, f(x^k)+ \langle v^k,z-x^k\rangle + \frac{1}{\alpha_k}\, \cD(z,x^k),  
\end{equation} 
with $v^k = \nabla f(x^k)$ and $0< \alpha_k \leq  1/\sL$. As a majorization minimization scheme, a sufficient descent in terms of $\cD(x^{k+1},x^k)$ is ensured by \eqref{eq:extended descent}, and this 
enabled a wealth of work on relative smooth optimization, as well as their vanilla stochastic counterparts, see \cite{bauschke2017descent,bolte2018first,lu2018relatively,zhang2018convergence,davis2018stochastic,ding2023nonconvex,fatkhullin2024taming}.

However, merely the extended descent lemma may not be sufficient to move beyond vanilla stochastic approximation schemes. For many widely adopted  stochastic optimization techniques such as momentum \cite{liu2020improved}, random reshuffling \cite{gurbuzbalaban2021random,mishchenko2020random,nguyen2021unified}, and variance reduction \cite{johnson2013accelerating,cutkosky2020momentum}, a Lipschitz-type bound like \eqref{eq:traditional Lipschitz bound} is explicitly required for controlling stochastic error, which is not directly available under the relative smoothness framework. 
To bridge this gap, we demonstrate that if a function $f$ satisfies the relative smoothness condition with some constant $\sL$ and kernel $h$, then a \emph{dual Lipschitz continuity} holds for all $x,y\in\cX\subseteq\idom{h}$,
\begin{equation}\label{eq:dual Lipschitz} 
\|\nabla f(x) - \nabla f(y)\|  \leq \sL\,\sqrt{\kappa(\cX)}\cdot \rho_h(x,y),  
\end{equation} 
where $\rho_h(x,y):=\|\nabla h(x)-\nabla h(y)\|$ is a distance in the dual space induced by the mirror map $\nabla h$, and 
$$\kappa(\cX):=\;\max\, \left\{\frac{\lambda_{\max}(\nabla^2h(x))} {\lambda_{\min}(\nabla^2h(y))}: x,y\in\cX \right\}$$ is the condition number of $h$ over $\cX$. When $h(x)=\frac12\|x\|^2$, one has $\kappa(\cX)\equiv1$. But for non-quadratic kernels, $\kappa(\cX)$ may vary as $\cX$ changes. To resolve the issue of a varying constant $\kappa(\cX)$, we note that Lipschitz bound is typically applied to consecutive iterates $x^k$ and $x^{k+1}$ whose dual space distance  can be \emph{uniformly bounded} by any predetermined constant $\delta>0$, under suitable step-size.  Hence, we expect to uniformly control $\kappa(\cX)$ for any $\cX$ with $\delta$-\emph{bounded dual diameter}. In this paper, we formalize this intuition and introduce a dual kernel conditioning (DKC) regularity assumption. Combining DKC and \eqref{eq:dual Lipschitz}, we can say the gradient $\nabla f$ satisfies a \emph{uniform local Lipschitz continuity} w.r.t. the non-Euclidean distance $\rho_h$, which is already sufficient for the purpose of bounding errors in many stochastic optimization methods.


In fact, the DKC assumption is very general and versatile. We show that this assumption holds for a wide class of kernels, including Shannon entropy, Burg's entropy, Fermi-Dirac entropy, exponential kernel, and power kernels. We also prove that the kernel function class satisfying DKC is closed under nondegenerate affine compositions and compatible conic combinations.  


Finally, to elaborate the power of the dual Lipschitz bound \eqref{eq:dual Lipschitz} and DKC regularity, we design and analyze a random reshuffling mirror descent (RRMD) method for solving the nonconvex relatively smooth empirical risk minimization problem: 
\begin{equation}\label{eq:opt problem}
\min_{x\in \mathcal{Z}}\quad f(x) := \frac1n{\sum}_{i=1}^n\; f_i(x).
\end{equation}
We show that RRMD requires $\cO(\epsilon^{-1.5})$ to find some properly defined $\epsilon$-stationary point. 
Using the dual metric $\rho_h$, if in addition the objective function $f$ is definable, we are further able to prove the full-sequence convergence to a critical point of $f$ without necessitating $\mathcal{Z}=\dom{h}=\Rd$. This is in significant contrast to existing results on relatively smooth definable problems that all require the full domain assumption \cite{ahookhosh2021bregman,bolte2018first,latafat2022bregman,mukkamala2020convex}, which excludes many important kernels like Shannon entropy and Burg's entropy.


\subsection{Related works} Besides the related literature on relative smoothness that are already mentioned in the previous introduction, in this section, we review a few closely related works on stochastic mirror descent (SMD) and random reshuffling (RR). \vspace{0.2cm}

\noindent{\bf Random reshuffling. } RR is widely adopted technique to speed up stochastic gradient methods that replaces the unbiased with-replacement sampling scheme by a without-replacement scheme. Despite being biased, RR offers superior empirical speed up \cite{bottou2009curiously,bottou2012stochastic,goodfellow2016deep} to SGD without causing computational overhead. 
Under strong convexity, RR achieves an $\cO(\epsilon^{-0.5})$ sample complexity for finding some $\bar{x}$ s.t. $\Exp[\|\bar{x}-x^*\|^2]\leq \epsilon$ \cite{gurbuzbalaban2021random,mishchenko2020random}, significantly outperforming $\cO(\epsilon^{-1})$ complexity of SGD \cite{bottou2018optimization}. For nonconvex problems, RR requires $\cO(\epsilon^{-1.5})$ samples to find some $\bar{x}$ s.t.  $\Exp[\|\nabla f(\bar{x})\|^2]\leq\epsilon$ \cite{mishchenko2020random,nguyen2021unified,qiu2025new}, superior to the $\cO(\epsilon^{-2})$ complexity of SGD. Besides, the asymptotic rates of RR and the fast convergence under the \L ojasiewicz inequality have also been discussed in \cite{li2023convergence}. \vspace{0.2cm}

\noindent{\bf Stochastic mirror descent. } Taking $v^k = \nabla f_{i_k}(x^k)$ in \eqref{eq:majorization} with $i_k$ being independently and randomly selected from $\intset{n}$, we obtain the standard stochastic mirror descent method, which is a Bregman counterpart of SGD method. Early SMD, also called stochastic mirror-prox, emerged from the convex optimization \cite[etc.]{nemirovski2009robust,ghadimi2012optimal} 
under standard Lipschitz smoothness condition, 
and is recently extended to a more general relative smoothness setting \cite{nazin2019algorithms,d2021stochastic,hanzely2021fastest}. The SMD is also studied under the nonconvex relatively smooth setting considered in this paper \cite{bolte2018first,zhang2018convergence,davis2018stochastic,ding2023nonconvex,fatkhullin2024taming}, with a typical $\cO(\epsilon^{-2})$ sample complexity under suitable stationarity measure. In this paper, we show how random reshuffling can speed up SMD and achieve an $\cO(\epsilon^{-1.5})$ complexity.  

\subsection{Notation} 
By convention, $\|\cdot\|$ represents the $\ell_2$-norm in Euclidean space. Now let us fix the block partition of $x$, then for any $\cX\subseteq \Rd$, we denote $\cX_j$ its projection to the subspace of the $j$-th block. We use $\cC^2(\cZ)$ to denote the set of twice continuously differentiable functions defined on $\cZ$. For a matrix $A$, we use $\lambda_{\min}(A)$ and $\lambda_{\max}(A)$ to denote its minimum and maximum eigenvalues, respectively.

\section{Dual Lipschitz continuity and dual kernel conditioning}  
\subsection{Relative smoothness}
In this paper, we default the kernels to be associated with the feasible region $\mathcal{Z}$. Namely, we require the kernel $h\in\cC^2(\mathrm{int}(\cZ))$ to be strictly convex and is essentially smooth on its domain $\dom{h}$. By essential smoothness, the iterates \eqref{eq:majorization} always remain in the interior $\mathrm{int}(\mathcal{Z})$, but are allowed to converge to a boundary point when necessary.

\begin{definition}[Relative smoothness] \label[definition]{def:relative smooth}
\emph{We say a function $f$ is $\sL$-smooth relative to the kernel function $h$ for some constant $\sL>0$ if
$\sL  h \pm f$ are both convex on $\mathcal{Z}$.}
\end{definition}

If the function $f$ is also twice continuously differentiable, \cref{def:relative smooth} can also be restated as $-\sL \nabla^2h(x)\preceq \nabla^2 f(x)\preceq \sL \nabla^2h(x)$ for $\forall x\in \mathcal{Z}$, see \cite{lu2018relatively}. 
Denote
$\cD(x,y):=h(x) - h(y) - \langle \nabla h(y),x-y \rangle$ the Bregman divergence induced by the kernel $h$, then the extended descent lemma \eqref{eq:extended descent} holds. In particular, by setting $h(x) = \frac{1}{2}\|x\|^2$, the relative smoothness reduces to the standard Lipschitz smoothness and the extended descent lemma \eqref{eq:extended descent} reduces to the standard descent lemma \eqref{eq:traditional descent}.


\subsection{Dual kernel conditioning and dual Lipschitz continuity}  Denote $h^*$ the convex conjugate of $h$, it is known that $(\nabla h)^{-1}=\nabla h^*$. 
Then the mirror descent step \eqref{eq:majorization} allows an alternative form: $ x^{k+1} = \nabla h^*\left(\nabla h(x^k)-\alpha^kv^k\right)$ with $v^k = \nabla f(x^k)$. This corresponds to the original interpretation when mirror descent was first introduced \cite{nemirovsky1983problem}, where the descent step in fact takes place in the \emph{dual space} $\mathrm{Im}(\nabla h)$ induced by the \emph{mirror map} $\nabla h$, hoping that the dual space can better adapt to the problem geometry under properly selected kernel $h$, see \Cref{fig:mirror}.  

Next, as motivated in the introduction, we establish a Lipschitz-type bound to fulfill the needs for more intricate theoretical tools in stochastic algorithms. First, adopting this dual space perspective of mirror descent, we define
$\rho_h(x,y):=\|\nabla h(x)-\nabla h(y)\|,$ which  clearly induces a (non-Euclidean) distance because it is positive definite, symmetric, and satisfies triangle inequality. 

Observe that many kernels like Shannon entropy possess a separable structure, i.e., the variable $x$ allows a partition $x = (x_{[1]},x_{[2]},\ldots,x_{[m]})$ into $m$ blocks so that 
$h(x)=\sum_{j=1}^m h_{j}(x_{[j]})$,
where $x_{[j]}$ denotes the $j$-th block of $x\in\Rd$, and $h_j$ functions are often identical but not necessarily so. Then we introduce the local kernel condition number. 

\begin{figure}[h]
\centering
\begin{minipage}[b]{0.34\textwidth}
    \centering
    \includegraphics[width=\textwidth]{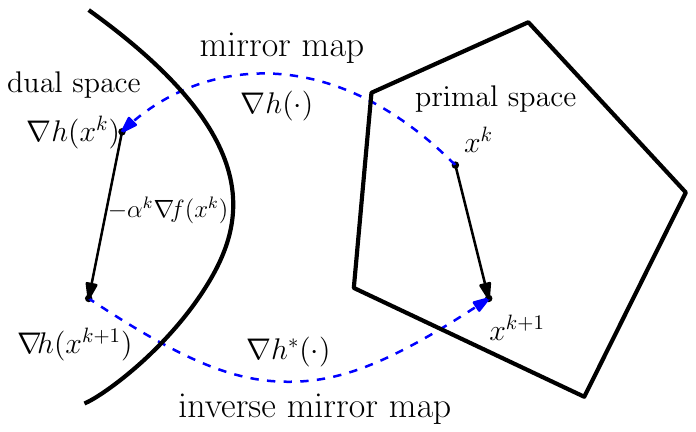} 
\end{minipage}
\hspace{-0.3cm}
\begin{minipage}[b]{0.42\textwidth}
    \centering
    \includegraphics[width=\textwidth]{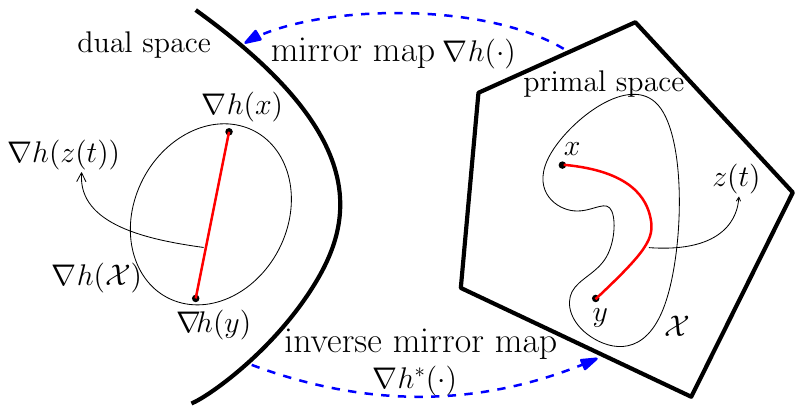}  
\end{minipage}
\hfill
\begin{minipage}[b]{0.22\textwidth}
    \centering
    \includegraphics[width=\textwidth]{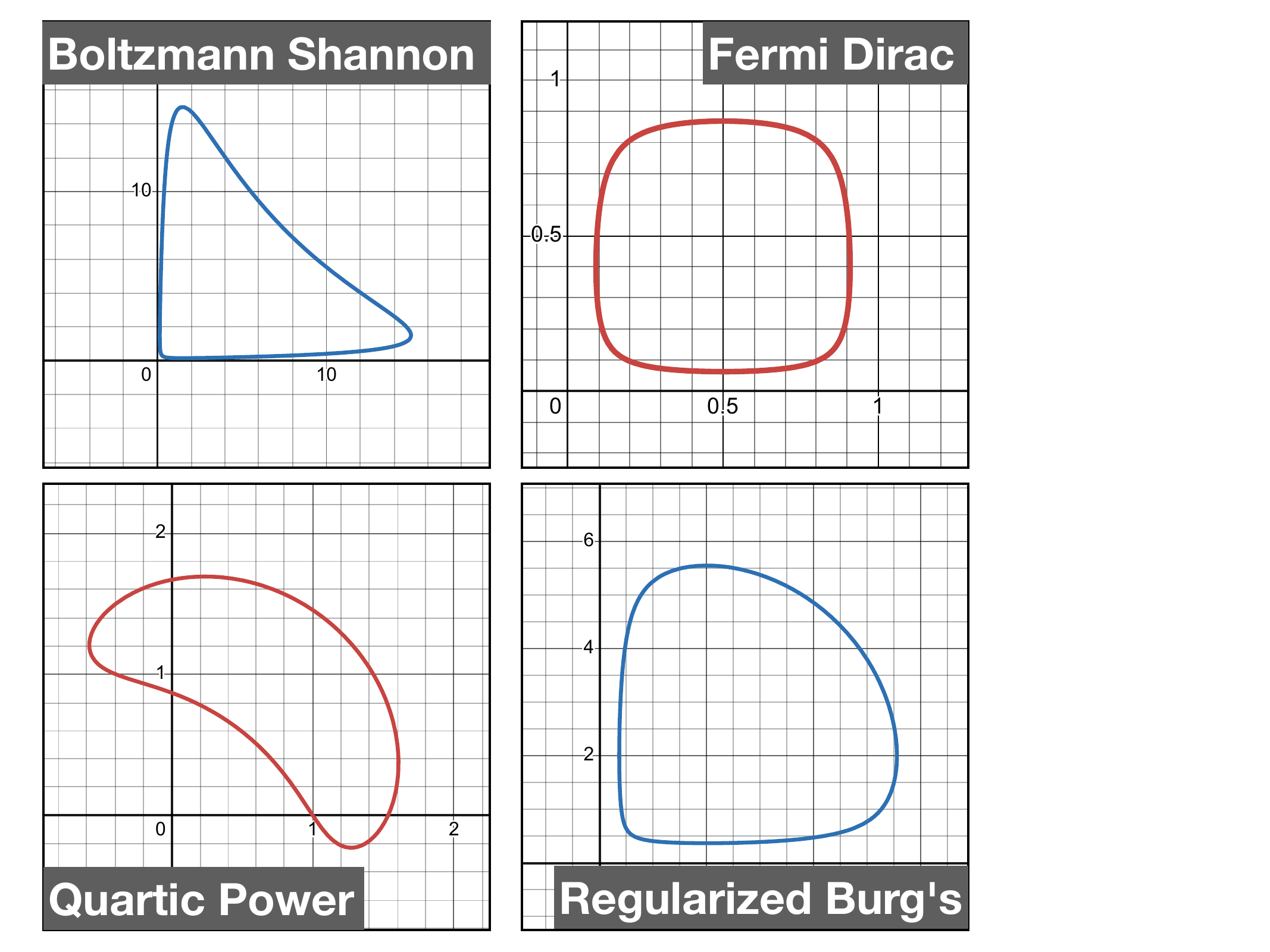}  
\end{minipage}
\caption{The first illustrates the dual space interpretation of mirror descent. The second illustrates the construction of the integration path $z(t)$ in \cref{lem:Lipschitz continuity,lem:important bounds}, by mapping the line segment between $\nabla h(x)$ and $\nabla h(y)$ back to the primal space. The third illustrates  $\nabla h^*(\mathcal{B})$ under quartic power kernel, where $\mathcal{B}$ is a ball in the dual space. This shows that when $\nabla h(\cX)$ is convex in dual space, $\cX$ may not necessarily be convex in primal space. } 
\label{fig:mirror}
\end{figure}
\vspace{-0.2cm}

\begin{definition}\label[definition]{def:mu and L}
    \emph{Let $h(x)=\sum_{j=1}^m h_{j}(x_{[j]})$ be a separable kernel associated with $\cZ$, define \[\mu_j(\cX_{j} ):={\min}_{x\in\cX_{j} } \lambda_{\min}\left(\nabla^2h_{j}(x)\right) \quad \text{and} \quad  \cL_j(\cX_{j} ):={\max}_{x\in\cX_{j} } \lambda_{\max}\left(\nabla^2h_{j}(x)\right),\]
    for any compact set $\cX\subseteq\idom{h}$. Then the local condition number of the kernel $h$ over $\cX$ is defined as $\kappa(\cX):=\max_{j\in\intset{m}}\big\{\kappa_j(\cX_j)\big\}$ with $\kappa_j(\cX_j):={\cL_j(\cX_{j} )}/{\mu_j(\cX_{j} )}$, and $\cX_j$ denotes the projection of $\cX$ to the subspace of the $j$-th block $x_{[j]}$, for each $j\in\intset{m}$.}
\end{definition}

By strict convexity and twice continuous differentiability of $h$, the quantity $\kappa(\cX)$ is always well-defined on compact $\cX$. Defining $\diam_{h}(\cY):=\sup\left\{\rho_{h}(x,y):x,y\in\cY\right\}$ the diameter of a set $\cY$ under the non-Euclidean distance $\rho_{h}(\cdot\,,\cdot)$, we introduce the dual kernel conditioning (DKC) regularity. 

\begin{definition}[DKC regularity]
\label[definition]{def:kernel cond}
\emph{We say an $m$-block kernel $h$ satisfies the DKC regularity if for any given $\delta>0$,   there exists a finite constant $\kappa_{\delta}>0$ such that $\kappa(\cX) \leq \kappa_{\delta}$, for all $\cX\in\Rd$ that satisfies $\diam_{h_j}(\cX_{j})\leq \delta$, for all  $\forall j\in\intset{m}$. } 
\end{definition}

That is, for any fixed $\delta>0$, every component $h_j$ is well-behaved with a \emph{uniformly} bounded condition number $\kappa_{\delta}$, as long as it is restricted in a local region $\cX_{j}$  with $\delta$-bounded $\rho_{h_j}$ diameter. This constant can often serve as a substitute for global Lipschitz constant in the analysis, enabling the analysis of random reshuffling mirror descent method (\cref{alg:shuffle}). Moreover, the concept of DKC is a natural extension of the kernel conditioning regularity proposed in \cite{zhang2024stochastic} that measures the size of local region $\cX_j$ by Euclidean distance and works exclusively for ``unconstrained'' kernels with $\dom{h} = \mathbb{R}^d$, excluding various important entropy-based kernels. By leveraging the novel non-Euclidean distance $\rho_{h_j}(\cdot\,,\cdot)$, DKC is naturally suitable for characterizing the local behavior of kernels with finite singularity, as illustrated in the following proposition, with verification details presented in Appendix \ref{app:kernel}.

\begin{proposition}\label[proposition]{prop kernel}
The following kernels all satisfy the DKC regularity condition:
\begin{itemize}[leftmargin=0.5cm, itemsep=0.00cm]
    \item Boltzmann-Shannon entropy $h(x):=\sum_{i=1}^d x_i\log(x_i)$ with $\dom{h}=\R_{++}^d$.
    \item Regularized Burg's entropy $ h(x) := \sum_{i=1}^d - \log(x_i) + \frac{\sigma}{2}\|x\|^2$, with $\sigma>0$ and  $\dom{h}=\R_{++}^d$.
    \item Fermi-Dirac entropy $ h(x):= \sum_{i=1}^d x_i\log(x_i) + (1-x_i) \log (1-x_i)$, with domain $\dom{h}=(0,1)^d$.\vspace{1mm}
    \item Power kernel: $h(x) := \frac{1}{r+2}\|x\|^{r+2} + \frac12\|x\|^2$, with $r\geq 0$ and domain $\dom{h}=\Rd$.
\end{itemize} 
\end{proposition}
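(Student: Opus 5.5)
Each kernel is checked separately. For the three entropies we take the blocks to be single coordinates ($m=d$); for the power kernel we take one block ($m=1$). In every case the task reduces to a statement about a scalar or radial function. If $\rho_{h_j}(x,y)=|h_j'(x)-h_j'(y)|\le\delta$ for all $x,y\in\cX_j$, then the ratio $\max_{\cX_j}h_j''/\min_{\cX_j}h_j''$ must be bounded by a constant $\kappa_\delta$ that does not depend on where $\cX_j$ sits in the domain. The unifying device is to write $h_j''$ as a function of the dual variable $s=h_j'(x)$, i.e. $h_j''(x)=1/(h_j^*)''(s)$. We then need to show that $\log h_j''$, viewed as a function of $s$, is Lipschitz, or at least has bounded oscillation on intervals of length $\delta$. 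Since $\cX_j$ maps into an interval of length at most $\delta$ in $s$, this gives $\kappa_j\le\exp(\mathrm{Lip}\cdot\delta)$, or the corresponding oscillation bound.

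\emph{Shannon.} Here $h'(x)=\log x+1$ and $h''=1/x=e^{1-s}$, so $\log h''=1-s$ is $1$-Lipschitz in $s$ and $\kappa_\delta=e^{\delta}$.

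\emph{Fermi--Dirac.} Here $s=\log\frac{x}{1-x}$, so $x=\sigma(s)$ is the logistic function, and $h''=\frac{1}{x(1-x)}=1/(\sigma(s)(1-\sigma(s)))$. We have $\frac{d}{ds}\log(\sigma(1-\sigma))=1-2\sigma\in[-1,1]$, so again $\kappa_\delta\le e^{\delta}$.

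\emph{Regularized Burg.} Here $h'(x)=-1/x+\sigma x$ and $h''=1/x^2+\sigma$. Computing $\frac{d}{ds}\log h''=\frac{h'''}{(h'')^2}=\frac{-2/x^3}{(1/x^2+\sigma)^2}$, its absolute value is at most $\frac{2x}{(1+\sigma x^2)^2}$. This is bounded uniformly in $x>0$ by a constant $c_\sigma$: it is $\cO(x)$ near $0$ and $\cO(x^{-3})$ at infinity. Hence $\kappa_\delta\le e^{c_\sigma\delta}$.

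\emph{Power kernel.} This is the one genuinely multivariate case and the main obstacle. Write $h=\phi(\|x\|)$ with $\phi(t)=\frac{t^{r+2}}{r+2}+\frac{t^2}{2}$. Then $\nabla h(x)=\psi(\|x\|)\frac{x}{\|x\|}$ with $\psi(t)=t^{r+1}+t$, and the Hessian has eigenvalues $\psi'(t)=(r+1)t^r+1$ (radial) and $\psi(t)/t=t^r+1$ (tangential). At a single point the ratio is therefore at most $r+1$. Across points we get $\kappa(\cX)\le (r+1)\max_{\cX}(1+t^r)/\min_{\cX}(1+t^r)$ with $t=\|x\|$. Since $\nabla h$ preserves direction, the reverse triangle inequality gives $|\psi(\|x\|)-\psi(\|y\|)|\le\rho_h(x,y)\le\delta$. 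So it remains to bound $(1+t_1^r)/(1+t_2^r)$ whenever $|\psi(t_1)-\psi(t_2)|\le\delta$.

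For this last step, set $u=\psi(t)$ and bound $\frac{d}{du}\log(1+t^r)=\frac{rt^{r-1}}{(1+t^r)\psi'(t)}$. For $t\ge1$ this is at most $\frac{r t^{r-1}}{t^r\cdot t^r}\le r$. For $t\le1$ it is at most $r t^{r-1}$, which is fine when $r\ge1$. When $0<r<1$ this derivative blows up at $0$, so instead we argue directly. On $[0,T]$ with $T=\psi^{-1}(\delta+\psi(1))$ the function $1+t^r$ lies in $[1,1+T^r]$, and outside this range the Lipschitz bound applies. Combining the two regimes gives a finite $\kappa_\delta$. The case $r=0$ is trivial, since the Hessian is $2I$.
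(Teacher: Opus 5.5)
Your argument is correct, but it takes a different route from the paper's kernel-by-kernel verification.

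For the three entropies you use a single criterion: $\log h_j''$, viewed as a function of the dual coordinate $s=h_j'(x)$, has bounded oscillation on intervals of length $\delta$, and you check this through $|h_j'''/(h_j'')^2|$. The paper works case by case:
\begin{itemize}
\item For Shannon it computes $\kappa_i=\exp(\diam_{h_i}(\cX_i))$ directly, matching your $e^{\delta}$.
\item For regularized Burg it uses $\sigma$-strong convexity to bound $y/z+z/y$, which gives a polynomial bound in $\delta$ and $\sigma$. Your bound is $e^{c_\sigma\delta}$ with $c_\sigma=9/(8\sqrt{3\sigma})$.
\item For Fermi--Dirac it needs a four-subcase analysis to reach the same $e^{\delta}$ that your identity $\frac{d}{ds}\log\bigl(x(1-x)\bigr)=1-2x\in[-1,1]$ gives in one line.
\end{itemize}
Your criterion is in fact exact for scalar blocks: the best $\kappa_\delta$ is $\exp$ of the largest oscillation of $\log h_j''\circ(h_j')^{-1}$ over dual sets of diameter $\delta$. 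So it explains where the constants come from and gives a reusable test. The price is that your constants are sometimes exponential where the paper's explicit computations are polynomial in $\delta$.

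For the power kernel, the paper uses only $1$-strong convexity, $\|y-z\|\le\rho_h(y,z)\le\delta$. It then bounds $\sup_{t\ge0}\frac{(r+1)(t+\delta)^r+1}{t^r+1}$ by splitting at $t=1$, which treats all $r\ge0$ at once. Your radial reduction $|\psi(t_1)-\psi(t_2)|\le\delta$ is valid and slightly sharper. However, differentiating in $u=\psi(t)$ is what forces your separate $0<r<1$ case. Since $\psi'\ge1$, you also have $|t_1-t_2|\le\delta$ and could simply finish as the paper does.

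If you keep your route, state the step behind ``combining the two regimes''. If one point has $t>T$, then the other point satisfies $\psi(t)\ge\psi(T)-\delta=\psi(1)$. So either both points lie in $[0,T]$, or both lie in $[1,\infty)$, where your Lipschitz bound applies.
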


As can be observed from the analysis of \cref{prop kernel}, the key motivation of introducing the DKC-regularity for separable kernels in a block-wise sense lies in the potentially inconsistent scaling among different variables. Consider the bivariate Boltzmann-Shannon entropy
$h(x)= \sum_{i=1}^2h_i(x_i)$ with  $h_i(x_i):= x_i\log x_i$, then $\nabla^2h(x) = \mathrm{diag}(x_1^{-1},x_2^{-1})$. Grouping $(x_1,x_2)$ as one whole block yields
\[\frac{\lambda_{\max}(\nabla^{2}h(x))}{\lambda_{\min}(\nabla^{2}h(x))} = \frac{\max\{x_1,x_2\}}{\min\{x_1,x_2\}} \to +\infty \quad \text{if}\quad  \frac{x_2}{x_1} \to +\infty.\]
That is, the kernel condition number can be huge even for singletons $\cX = \{x\}$ (hence $\diam_{h}(\cX)=0$), when $x_1$ and $x_2$ has largely different magnitudes. On the contrary, treating the kernel block-wise, \cref{prop kernel} demonstrates the well-behavedness of the kernel on each block.  Nevertheless, if the blocks do not diverge excessively, the blocks can still be stacked together as a larger block without encountering pathological condition numbers, which will be useful in the analysis of \cref{lem:Lipschitz continuity-m}.


Next, we introduce a desirable closedness property for the class of DKC-regular functions. For matrix $A$, denote $\sigma_{\min}(A)$ and $\sigma_{\max}(A)$ the minimum and maximum singular values, and denote $\kappa_A:=\frac{\sigma_{\max}(A)}{\sigma_{\min}(A)}$ the matrix condition number. The next proposition shows that the class of DKC-regular functions are well-posed and stay robust under various common operations.

\begin{proposition}[Closedness of DKC regularity]
\label[proposition]{theorem:closedness}
The DKC regularity is closed under scaling, compatible affine composition and conic combination in the following sense:\vspace{0.2cm}\\
\noindent{\emph{(a)}.} Let $h$ be a kernel with $m\geq1$ blocks, $A=\mathrm{Diag}(A_1,\cdots,A_m)\in\R^{d\times d}$ and $b\in\Rd$. If each $A_j$ is square and non-singular with size compatible to $x_{[j]}$, then the composition
$\phi(x):= h(Ax+b)$ satisfies DKC with
$\kappa_{\phi,\delta}\leq \max_{j\in\intset{m}} \{ \kappa_{A_j}^2\cdot\kappa_{h,{\delta}/{\sigma_{\min}(A_j)}} \}$.\vspace{0.25cm}\\
\noindent{\emph{(b)}.} Let $h$ and $g$ be two compatible kernels in the sense that $h$ and $g$ share the same block structure and $ \langle\nabla h(x)-\nabla h(y),\nabla g(x)-\nabla g(y)\rangle\geq0$. Then the conic combination $\phi(x):= \alpha h(x) + \beta g(y)$, $\alpha,\beta>0$ satisfies DKC with 
$\kappa_{\phi,\delta}\leq \max \left\{ \kappa_{h,{\delta}/{\alpha}},\kappa_{g,{\delta}/{\beta}}\right\}$.
\end{proposition}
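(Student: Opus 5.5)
Both parts follow the same template. Fix $\delta>0$ and an arbitrary set $\cX$ with $\diam_{\phi_j}(\cX_j)\le\delta$ for every block $j$. First I transfer this dual-diameter bound to a dual-diameter bound for the original kernel(s), with a modified radius. Then I invoke the DKC constant of that kernel, and finally I compare the Hessian eigenvalues of $\phi_j$ with those of $h_j$ (and $g_j$). Since $\kappa$ is a blockwise maximum, it suffices to argue one block at a time.

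\emph{Part (a).} Set $u=A_jx_{[j]}+b_{[j]}$ and $\cU_j:=A_j\cX_j+b_{[j]}$. The chain rule gives $\nabla\phi_j(x_{[j]})=A_j^\top\nabla h_j(u)$ and $\nabla^2\phi_j(x_{[j]})=A_j^\top\nabla^2h_j(u)A_j$. For $x,y\in\cX_j$ with images $u,v$ we then have
\[
\|\nabla\phi_j(x)-\nabla\phi_j(y)\|=\|A_j^\top(\nabla h_j(u)-\nabla h_j(v))\|\ge\sigma_{\min}(A_j)\,\rho_{h_j}(u,v).
\]
Hence $\diam_{h_j}(\cU_j)\le\delta/\sigma_{\min}(A_j)$. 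The DKC property of $h$, applied to the set with block projections $\cU_j$ (the bound holds blockwise), yields $\cL_j(\cU_j)/\mu_j(\cU_j)\le\kappa_{h,\delta/\sigma_{\min}(A_j)}$. From the Rayleigh-quotient bounds,
\[
\lambda_{\max}\big(A_j^\top H A_j\big)\le\sigma_{\max}(A_j)^2\lambda_{\max}(H),\qquad \lambda_{\min}\big(A_j^\top H A_j\big)\ge\sigma_{\min}(A_j)^2\lambda_{\min}(H).
\]
Taking the max over points for the numerator and the min over points for the denominator gives $\kappa_j(\cX_j;\phi)\le\kappa_{A_j}^2\,\kappa_j(\cU_j;h)$. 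Maximizing over $j$ gives the claim.

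\emph{Part (b).} This part reads $g(x)$ in place of the typo $g(y)$. The key step is to use the compatibility condition blockwise. I assume it holds for each block, which is what "share the same block structure" should mean; otherwise take $x,y$ differing only in block $j$. Expanding the square then gives
\[
\rho_{\phi_j}(x,y)^2=\alpha^2\rho_{h_j}^2+\beta^2\rho_{g_j}^2+2\alpha\beta\langle\nabla h_j(x)-\nabla h_j(y),\nabla g_j(x)-\nabla g_j(y)\rangle\ge\max\{\alpha^2\rho_{h_j}^2,\beta^2\rho_{g_j}^2\}.
\]
So $\diam_{h_j}(\cX_j)\le\delta/\alpha$ and $\diam_{g_j}(\cX_j)\le\delta/\beta$, and DKC for each kernel bounds $\kappa_j$ of $h$ and of $g$ on $\cX_j$ by $K:=\max\{\kappa_{h,\delta/\alpha},\kappa_{g,\delta/\beta}\}$.

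For the Hessians, at any $x,y\in\cX_j$,
\[
\lambda_{\max}\big(\nabla^2\phi_j(x)\big)\le\alpha\cL^h_j+\beta\cL^g_j\le K\big(\alpha\mu^h_j+\beta\mu^g_j\big)\le K\,\lambda_{\min}\big(\nabla^2\phi_j(y)\big),
\]
using Weyl's inequalities for the outer steps. This gives $\kappa_j(\cX_j;\phi)\le K$, as claimed.

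The main obstacle is the blockwise use of compatibility in (b): the hypothesis as stated involves the full gradient inner product, while DKC is defined through blockwise diameters. I would resolve this by restricting to points differing in one block, or by stating the compatibility blockwise. Separability makes the two formulations equivalent whenever the condition holds for all $x,y$, because varying one block at a time isolates each block's term. Part (a) is routine by comparison, apart from noting that the set with block projections $\cU_j$ is the admissible set to which DKC of $h$ is applied.
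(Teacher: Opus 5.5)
Your proposal is correct and follows the paper's proof essentially step for step. In (a) it uses the bound $\rho_{\phi_j}\geq\sigma_{\min}(A_j)\,\rho_{h_j}$ followed by the eigenvalue bounds for $A_j^\top\nabla^2h_jA_j$; in (b) it expands $\rho_{\phi_j}^2$ under blockwise compatibility to get $\rho_{h_j}\leq\delta/\alpha$, $\rho_{g_j}\leq\delta/\beta$ and then combines the Hessian bounds. Your version is, if anything, slightly more careful than the paper's: you use Weyl's inequalities where the paper writes $\lambda_{\max}(\nabla^2\phi_j)$ and $\lambda_{\min}(\nabla^2\phi_j)$ as exact sums, and you keep the radius $\delta/\sigma_{\min}(A_j)$ where the paper's intermediate display reads $\kappa_{h,\delta}$.
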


It can be verified that all the kernels mentioned  in \Cref{prop kernel} satisfies the conditions of \Cref{theorem:closedness}, and are hence closed under the corresponding operations. \\

\noindent\emph{Proof of (a).}
As $h$ can be separated into $m$ blocks, it is then expressed as $
h(x) = \sum_{j=1}^m h_j(x_{[j]}).$ 
Noting that $A=\mathrm{Diag}(A_1,\ldots,A_m)$ with each $A_j$ being square and non-singular with size compatible to $x_{[j]}$, we can further write the composition $\phi(x):=h(Ax+b)$ as:
\[
\phi(x)={\sum}_{j=1}^m h_j(A_j x_{[j]} +b_{[j]}).
\]
We next focus on the $j$-th block. Define $\phi_j(z):=h_j(A_j z +b_{[j]})$, then 
\begin{align*}
\nabla \phi_j(z)&=A_j^\top \nabla h_j(A_j z + b_{[j]}),\quad \nabla^2 \phi_j(z)=A_j^\top \nabla^2 h_j(A_j z + b_{[j]}) A_j \\ \text{and} \quad \rho_{\phi_j}(y,z)&=\Big\|A_j^\top\big(\nabla h_j(A_j z + b_{[j]}) - \nabla h_j(A_j z + b_{[j]}) \big)\Big\|.    
\end{align*}

Let the set $\cX_j$, which is compatible to $x_{[j]}$, satisfy $\diam_{h_j}(\cX_j)\leq \delta$ under the metric $\rho_{\phi_j}(\cdot,\cdot)$. Thus, for all $y,z\in\cX_j$, we have 
\begin{align*}
    \delta \geq \rho_{\phi_j}(y,z) \geq \sigma_{\min}(A_j)\cdot \|\nabla h_j(A_j y + b_{[j]}) - \nabla h_j(A_j z + b_{[j]}) \|.
\end{align*}
Since $h_j$ fulfills DKC, based on the above relation, it holds that
\begin{equation}
    \label{eq:close DKC 1}
    \sup_{y,z\in\cX_j} \frac{\lambda_{\max}(\nabla^2 h_j(A_j y + b_{[j]}))}{\lambda_{\min}(\nabla^2 h_j(A_j z + b_{[j]}))}  \leq \kappa_{h,\delta}.
\end{equation}
Consider the condition number of $\phi_j$ on $\cX_j$. It readily follows from \eqref{eq:close DKC 1} that
\begin{align*}
    \sup_{y,z\in\cX_j} \frac{\lambda_{\max}(\nabla^2 \phi_j(y))}{\lambda_{\min}(\nabla^2 \phi_j(z))} &= \sup_{y,z\in\cX_j} \frac{\lambda_{\max}(A_j^\top \nabla^2 h_j(A_j y + b_{[j]}) A_j)}{\lambda_{\min}(A_j^\top \nabla^2 h_j(A_j z + b_{[j]}) A_j)} \\[1mm]
    &\leq \sup_{y,z\in\cX_j} \frac{\sigma_{\max}(A_j)\sigma_{\max}(A_j^\top) \cdot \lambda_{\max}( \nabla^2 h_j(A_j y + b_{[j]})) }{\sigma_{\min}(A_j)\sigma_{\min}(A_j^\top) \cdot\lambda_{\min}(\nabla^2 h_j(A_j z + b_{[j]}))} \leq \kappa_{A_j}^2 \cdot  \kappa_{h,\delta}.\qquad \Box
\end{align*}
\noindent\emph{Proof of (b).}
Due to the shared block structure of  $h$ and $g$, we can decompose $h,g$ and $\phi$ as
\[
h(x) = \sum_{j=1}^m h_{j}(x_{[j]}),\quad g(x) = \sum_{j=1}^m g_{j}(x_{[j]}) \quad \text{and} \quad \phi(x)=\sum_{j=1}^m \phi_j(x_{[j]}) = \sum_{j=1}^m \big(\alpha h_j(x_{[j]}) + \beta g_j(x_{[j]}) \big),
\]
where the integer $m \geq 1$. We analyze the $j$-th block, whose result can extend to all blocks.  Let the set $\cX_j$ satisfy $\diam_{h_j}(\cX_j) \leq \delta$ under the metric $\rho_{\phi_j}(\cdot,\cdot)$. It then follows for all $y,z\in\cX_j$ that
\begin{align*}
    \delta^2 &\geq \rho_{\phi_j}^2(y,z) = \|\nabla \phi_j(y) - \nabla \phi_j(z)\|^2\\
    &=\|\alpha (\nabla h_j(y)  - \nabla h_j(z)) + \beta (\nabla g_j(y) - \nabla g_j(z)) \|^2\\
    &=\alpha^2\|\nabla h_j(y)  - \nabla h_j(z)\|^2 + \beta^2 \|\nabla g_j(y) - \nabla g_j(z)\|^2 + 2\alpha\beta \langle \nabla h_j(y)  - \nabla h_j(z), \nabla g_j(y) - \nabla g_j(z)\rangle \\
    & \geq \alpha^2 \rho_{h_j}^2(y,z) + \beta^2 \rho_{g_j}^2(y,z),
\end{align*}
where the last line is due to $\langle \nabla h_j(y)  - \nabla h_j(z), \nabla g_j(y) - \nabla g_j(z)\rangle\geq 0$. Hence, we have $\rho_{h_j}(y,z)\leq \delta/\alpha$ and $\rho_{g_j}(y,z)\leq \delta/\beta$ for any $y,z\in\cX_j$. Recall that both $h$ and $g$ satisfy DKC, 
\[
\sup_{y,z\in\cX_j}\frac{\lambda_{\max}(\nabla^2 h_j(y))}{\lambda_{\min}(\nabla^2 h_j(z))}  \leq \kappa_{h,\delta/\alpha} \quad \text{and} \quad \sup_{y,z\in\cX_j}\frac{\lambda_{\max}(\nabla^2 g_j(y))}{\lambda_{\min}(\nabla^2 g_j(z))}  \leq \kappa_{g,\delta/\beta}.
\]
We now consider the condition number of $\phi_j$ on $\cX_j$, i.e.,
\begin{align*}
    \sup_{y,z\in\cX_j}\frac{\lambda_{\max}(\nabla^2 \phi_j(y))}{\lambda_{\min}(\nabla^2 \phi_j(z))} = \sup_{y,z\in\cX_j}\frac{\alpha \lambda_{\max}(\nabla^2 h_j(y)) + \beta \lambda_{\max}(\nabla^2 g_j(y)) }{\alpha \lambda_{\min}(\nabla^2 h_j(z)) + \beta \lambda_{\min}(\nabla^2 g_j(z))} \leq \max\{ \kappa_{h,\delta/\alpha}, \kappa_{g,\delta/\beta}\},
\end{align*}
where we utilize $\frac{a+b}{c+d} \leq \max\{\frac{a}{c},\frac{b}{d}\}$ to obtain the final inequality. \hfill$\Box$

\subsection{Dual Lipschitz continuity} Based on the previous preparation, we are able to provide the dual Lipschitz continuity bound under the $\rho_h$ distance in the dual space of mirror descent. 

\begin{lemma}\label[lemma]{lem:Lipschitz continuity}
Suppose $f$ is $\sL$-smooth relative to $h$ with $m=1$ block. Let $\cX\subseteq \idom{h}$ with image $\nabla h(\cX)$ being convex in the dual space. Then 
$\|\nabla f(x) - \nabla f(y)\| \leq \sL \sqrt{\kappa(\cX)}\cdot \rho_{h}(x,y)$, for $\forall x,y\in\cX$. 
\end{lemma}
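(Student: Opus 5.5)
Since $\nabla h(\cX)$ is convex, we integrate along the curve obtained by mapping the dual segment back to the primal space, as in the second panel of \Cref{fig:mirror}. Define $z(t):=\nabla h^*\big((1-t)\nabla h(x)+t\nabla h(y)\big)$ for $t\in[0,1]$. Convexity of $\nabla h(\cX)$ ensures $z(t)\in\cX$ for every $t$, and $z(0)=x$, $z(1)=y$. Because $h\in\cC^2$ is strictly convex with $\nabla^2 h\succ0$ on $\idom{h}$, the inverse function theorem makes $\nabla h^*$ continuously differentiable with Jacobian $\nabla^2 h(z)^{-1}$. Hence
\[
z'(t)=\big[\nabla^2 h(z(t))\big]^{-1}\big(\nabla h(y)-\nabla h(x)\big).
\]
I will assume $f\in\cC^2$ near $\cX$, so that relative smoothness gives $-\sL\nabla^2 h\preceq\nabla^2 f\preceq\sL\nabla^2 h$. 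If $f$ is only $\cC^1$, I would first mollify $f$ or argue through monotonicity of $\nabla(\sL h\pm f)$.

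The fundamental theorem of calculus then gives
\[
\nabla f(y)-\nabla f(x)=\int_0^1 \nabla^2 f(z(t))\,H_t^{-1}\,g\,dt,
\]
where $H_t:=\nabla^2 h(z(t))$ and $g:=\nabla h(y)-\nabla h(x)$. It remains to bound the operator norm of $M_t:=\nabla^2 f(z(t))H_t^{-1}$. Write $M_t=H_t^{1/2}\,\big(H_t^{-1/2}\nabla^2 f(z(t))H_t^{-1/2}\big)\,H_t^{-1/2}$. Relative smoothness gives $-\sL I\preceq H_t^{-1/2}\nabla^2 f H_t^{-1/2}\preceq \sL I$, so the middle factor has norm at most $\sL$. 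It follows that
\[
\|M_t\|\le \sL\,\sqrt{\lambda_{\max}(H_t)/\lambda_{\min}(H_t)}\le \sL\sqrt{\kappa(\cX)},
\]
because $z(t)\in\cX$ and $\kappa(\cX)$ (with $m=1$) dominates the ratio $\lambda_{\max}(\nabla^2 h(u))/\lambda_{\min}(\nabla^2 h(v))$ over $u,v\in\cX$. Integrating then yields $\|\nabla f(x)-\nabla f(y)\|\le \sL\sqrt{\kappa(\cX)}\,\|g\|=\sL\sqrt{\kappa(\cX)}\,\rho_h(x,y)$.

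The main obstacle is obtaining the factor $\sqrt{\kappa}$ rather than $\kappa$. The naive bound $\|\nabla^2 f\|\,\|H^{-1}\|\le\sL\lambda_{\max}/\lambda_{\min}$ gives only $\kappa$, and the symmetric sandwich factorization above is what improves it. A secondary technical point is well-definedness. The dual segment lies in $\nabla h(\cX)\subseteq\mathrm{Im}(\nabla h)$, so $z(t)$ is defined and stays in $\idom{h}$. If $\cX$ is not compact, $\kappa(\cX)$ is read as a supremum and the bound holds trivially when that supremum is infinite.
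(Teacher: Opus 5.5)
Your proposal is correct and follows essentially the same argument as the paper. Both integrate along the pulled-back dual segment $z(t)=\nabla h^*(\nabla h(x)+t(\nabla h(y)-\nabla h(x)))$, and both bound $\|\nabla^2 f(z)\,[\nabla^2 h(z)]^{-1}\|\le \sL\sqrt{\kappa(\cX)}$ via the symmetric factorization $H^{1/2}\,(H^{-1/2}\nabla^2 f\,H^{-1/2})\,H^{-1/2}$. Your added remarks on the $\cC^2$ assumption for $f$ and on reading $\kappa(\cX)$ as a supremum are harmless refinements.
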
 

\begin{proof}
By default, we consider the single block case $m=1$. According to \cref{def:relative smooth}, we have  
\begin{equation} 
-\sL   I\preceq H_x^{-\frac12}\,\nabla^2 f(x)\,  H_x^{-\frac12} \preceq \sL  I\qquad\mbox{for}\qquad \forall x\in\mathcal{Z},\nonumber
\end{equation}
where $H_x:=\nabla^2 h(x)$. Then for any $x\in\mathcal{Z}$, we can observe that
\begin{equation}
    \label{eq:proof Lipschitz 1}
    \big\|\nabla^2 f(x)H_x^{-1} \big\| \leq \big\|H_x^{\frac12}\big\|\big\|H_x^{-\frac12}\nabla^2 f(x)H_x^{-\frac12}\big\|\big\|H_x^{-\frac12} \big\|\leq \sL\sqrt{\kappa(\cX)}.
\end{equation} 
Consider the parameterized curve $z(t):=\nabla h^*(\nabla h(x) + t\cdot u)$, $t\in[0,1]$, with $u:=\nabla h(y)-\nabla h(x)$. Clearly, $z(0) = x$ and $z(1) = y$. Due to the convexity of $\nabla h(\cX)$, the whole curve segment belong to $\cX$, see \Cref{fig:mirror}. Then using $\nabla f(y) -\nabla f(x) = \int_0^1 \nabla f(z(t))'\mathrm{d}t$, we have 
\begin{eqnarray}
    \|\nabla f(x) - \nabla f(y) \|  & = &  \Big\|\int_0^1\nabla^2 f(z(t))\nabla^2h^*(\nabla h(x) + t u)u\mathrm{d}t\Big\|\nonumber\\
    & \leq &  \int_0^1\!\!\big\|\nabla^2 f(z(t))\nabla^2h^*(\nabla h(z(t)))\big\|\mathrm{d}t \cdot\|u\|\nonumber
\end{eqnarray}
where the last inequality is due to the fact that $\nabla h(x)+tu = \nabla h(z(t))$. Using the fact that $\nabla^2 h^*(w)=[\nabla^2 h(x)]^{-1}$ with $x=\nabla h^*(w)$ and \eqref{eq:proof Lipschitz 1}, we further obtain that 
\begin{eqnarray*}
    \|\nabla f(x) - \nabla f(y) \| & \leq & \int_0^1 \big\|\nabla^2 f(z(t))[\nabla^2h(z(t))]^{-1}\big\|\mathrm{d}t \cdot\|u\| \\
    & \leq & \sL\sqrt{\kappa(\cX)}\cdot\|u\|.
\end{eqnarray*} 
Then substituting $\rho_h(x,y) = \|\nabla h(y)-\nabla h(x)\|$ to the above inequality proves the lemma.  
\end{proof}

As corollary of this lemma, if $f$ is $\sL$-smooth relative to a DKC-regular kernel $h$. Then fix any constant $\delta$ such that the corresponding $\kappa_\delta$ is mild, then for $\forall x,y\in\idom{h}$, we have 
\begin{equation}
    \label{eqn:Loc_Unif_Lip}
    \|\nabla f(x)-\nabla f(y)\|\leq \sL\sqrt{\kappa_\delta}\cdot\rho_h(x,y)\quad\mbox{if}\quad \rho_h(x,y)\leq \delta.
\end{equation} 
That is, although $\nabla f(\cdot)$ of a relatively smooth function may not be Lipschitz continuous under $\ell_2$-distance, it is locally Lipschitz continuous under a non-Euclidean $\rho_h$ distance with a uniformly bounded local Lipschitz constant. This gives us a better intuition of why the mirror descent can better adapt to the local geometry of the relative smooth optimization problems, providing an appropriate substitute of \eqref{eq:traditional Lipschitz bound} in a more general non-Euclidean setting. 
If $h$ has $m\geq2$ blocks, the situation is slightly more complicated, and is characterized by the following lemma.
\begin{lemma}\label[lemma]{lem:Lipschitz continuity-m}
Suppose $f$ is $\sL$-smooth relative to $h$ with $m\geq2$ blocks. Let $\cX=\cX_1\times\cdots\times\cX_m$, and $\cX_j\subseteq \idom{h_j}, j\in\intset{m}$. If both $\nabla h_j(\cX_j)$ and $\cX_j$ are convex for all $j\in\intset{m}$, then 
$$\sum_{j\in\intset{m}}\frac{\|\nabla_{[j]} \; f(x) - \nabla_{[j]} f(y)\|^2}{\mu_j(\cX_j)} \leq \sL^2 \Gamma^2(\cX) \kappa^2(\cX)\sum_{j\in\intset{m}} \frac{\rho_{h_j}^2(x_{[j]},y_{[j]})}{\mu_j(\cX_j)},\qquad \forall x,y\in\cX,$$
where $\Gamma(\cX) = \min  \Big\{8m, 8e\Big[2+3\ln\Big(\frac{\max_j\mu_j(\cX_j)}{\min_j\mu_j(\cX_j)}\Big)\Big]\Big\}^{1/2}$ is a finite constant.
\end{lemma}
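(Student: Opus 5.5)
The plan is to run the integration-path argument of \cref{lem:Lipschitz continuity} block by block, and to measure everything in the weighted norm $\|w\|_{D}^2:=\sum_{j}\|w_{[j]}\|^2/\mu_j(\cX_j)$, i.e.\ with the block-diagonal weight $D:=\mathrm{Diag}(\mu_1(\cX_1)I,\ldots,\mu_m(\cX_m)I)$. As in \cref{lem:Lipschitz continuity}, I work with the Hessian form of \cref{def:relative smooth}, so $f$ is taken to be twice continuously differentiable. In this notation the claim reads $\|\nabla f(x)-\nabla f(y)\|_D\le \sL\,\Gamma(\cX)\kappa(\cX)\,\|u\|_D$, where $u:=\nabla h(y)-\nabla h(x)$ has blocks $u_{[j]}=\nabla h_j(y_{[j]})-\nabla h_j(x_{[j]})$.

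\textbf{Step 1 (path).} Define the path blockwise by $z_{[j]}(t):=\nabla h_j^*\big(\nabla h_j(x_{[j]})+t\,u_{[j]}\big)$ for $t\in[0,1]$. Convexity of each $\nabla h_j(\cX_j)$ gives $z_{[j]}(t)\in\cX_j$, so $z(t)\in\cX$ for all $t$ by the product structure. Since $h$ is separable, $H_z:=\nabla^2h(z)$ is block diagonal, and $\dot z(t)=H_{z(t)}^{-1}u$. This gives
\[
\nabla f(y)-\nabla f(x)=\int_0^1\nabla^2 f(z(t))\,H_{z(t)}^{-1}u\,\mathrm{d}t .
\]

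\textbf{Step 2 (factorization).} Relative smoothness gives $M_z:=H_z^{-1/2}\nabla^2f(z)H_z^{-1/2}$ with $\|M_z\|\le\sL$. I then write
\[
D^{-1/2}\nabla^2 f(z)H_z^{-1}u=\big(D^{-1/2}H_z^{1/2}\big)\,M_z\,\big(H_z^{-1/2}D^{1/2}\big)\,D^{-1/2}u .
\]
All outer factors are block diagonal, so their norms are maxima over blocks.
\begin{itemize}
\item The first factor has norm at most $\max_j\sqrt{\lambda_{\max}(\nabla^2h_j(z_{[j]}))/\mu_j(\cX_j)}\le\max_j\sqrt{\cL_j(\cX_j)/\mu_j(\cX_j)}\le\sqrt{\kappa(\cX)}$.
\item The third factor has norm at most $\max_j\sqrt{\mu_j(\cX_j)/\lambda_{\min}(\nabla^2h_j(z_{[j]}))}\le1$, because $z_{[j]}\in\cX_j$ and $\mu_j$ is the minimum of $\lambda_{\min}$ over $\cX_j$.
\end{itemize}
Integrating in $t$ yields $\|\nabla f(x)-\nabla f(y)\|_D\le\sL\sqrt{\kappa(\cX)}\,\|u\|_D$.

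\textbf{Step 3 (conclusion).} Squaring gives the statement with constant $\sL^2\kappa(\cX)$. This is at most $\sL^2\Gamma^2(\cX)\kappa^2(\cX)$, since $\kappa(\cX)\ge1$ and $\Gamma(\cX)\ge1$: both entries of the minimum defining $\Gamma^2(\cX)$ are at least $8$, because $m\ge2$ and the logarithmic term is nonnegative.

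\textbf{Main obstacle.} The step I expect to need the most care is Step 2, specifically the claim that the third factor is bounded by $1$ uniformly along the path. This is where the blockwise weighting by the local minima $\mu_j(\cX_j)$ and the fact that the path stays inside $\cX$ (Step 1) are essential. If instead one tried to use a single scalar weight across blocks, as in \cref{lem:Lipschitz continuity}, the block scale ratio $\max_j\mu_j/\min_j\mu_j$ would enter; the $\Gamma(\cX)$ factor presumably comes from the paper handling such cross-block mixing via a cruder argument. If the weighted factorization turns out to be insufficient for that reason, the fallback is to split the blocks into dyadic groups by the size of $\mu_j$. Within a group the ratio of $\mu_j$'s is at most $e$, so \cref{lem:Lipschitz continuity} can be applied per group, and summing the groups gives either $m$ or roughly $\ln(\max_j\mu_j/\min_j\mu_j)$ terms, consistent with the form of $\Gamma(\cX)$.
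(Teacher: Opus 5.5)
Your proof is correct, and it takes a genuinely different and sharper route than the paper's.

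\textbf{The paper's route.} The paper does not integrate along the dual path in the multi-block case. It imports \cite[Lemma E.2]{zhang2024stochastic}, which works along primal segments; this is where the hypothesis that each $\cX_j$ is convex gets used. That lemma bounds each block by $8\sL^2\kappa(\cX)\sum_s\cL_s(\cX_s)\|x_{[s]}-y_{[s]}\|^2$. The paper then passes to dual distances via $\|x_{[s]}-y_{[s]}\|\le\rho_{h_s}(x_{[s]},y_{[s]})/\mu_s(\cX_s)$ from \cref{lem:important bounds}, which costs a second factor $\kappa(\cX)$. Summing over $j$ produces the factor $8m$. The logarithmic alternative comes from regrouping blocks whose $\mu_j(\cX_j)$ agree up to a factor $\tau=\min\{e^{1/3},R\}$ and re-running the same bound on the coarser block structure.

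\textbf{What your route buys.} Your blockwise dual path avoids all of this. The weight $D$ is a scalar multiple of the identity on each block, so it commutes with the block-diagonal $H_z$. Hence the two outer factors cost only $\sqrt{\kappa(\cX)}$ and $1$, and the scale ratio $\max_j\mu_j(\cX_j)/\min_j\mu_j(\cX_j)$ never enters. You obtain the constant $\sL^2\kappa(\cX)$. This beats the stated $\sL^2\Gamma^2(\cX)\kappa^2(\cX)$ by the factor $\Gamma^2(\cX)\kappa(\cX)\ge 16$. It also needs only convexity of the images $\nabla h_j(\cX_j)$; primal convexity of $\cX_j$ is superfluous.

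\textbf{Your concern in Step 2.} The worry in your last paragraph is unfounded. Step 2 goes through exactly as written, so the dyadic-grouping fallback (essentially the paper's argument) is unnecessary. The paper's route buys only the reuse of an existing Euclidean-segment lemma. Your bound, plugged into the multi-block stochastic error estimate, would remove the $\Gamma^2(\cX^k)\kappa_\delta$ amplification there and recover the single-block constants.
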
 
The proof is moved to Appendix \ref{proof:lem:Lipschitz continuity-m} for succinctness. This time, we assume both $\cX_j$ and $\nabla h_j(\cX_j)$ to be convex. It can be verified that all the separable kernels in \cref{prop kernel} satisfy this assumption. Finally, we end this section with a technical lemma that is used throughout the paper. 

\begin{lemma}\label[lemma]{lem:important bounds}
For $\forall j\in\intset{m}$, suppose $\cX_j\subseteq\idom{h_j}$ and the image $\nabla h_j(\cX_j)$ is convex, then 
\[
\frac{\rho_{h_j}(x_{[j]},y_{[j]})^2}{2\cL_j(\cX_j)}\leq \mathcal{D}_{h_j}(y_{[j]},x_{[j]}) \leq \frac{\rho_{h_j}(x_{[j]},y_{[j]})^2}{2\mu_j(\cX_j)}\quad \text{and} \quad \|x_{[j]}-y_{[j]}\| \leq \frac{\rho_{h_j}(x_{[j]},y_{[j]})}{\mu_j(\cX_j)}, \quad \forall x_{[j]},y_{[j]}\in\cX_j. 
\]
\end{lemma}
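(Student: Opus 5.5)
Fix a block $j$ and write $x=x_{[j]}$, $y=y_{[j]}$, $h=h_j$, $\mu=\mu_j(\cX_j)$, $\cL=\cL_j(\cX_j)$, and $u:=\nabla h(y)-\nabla h(x)$. The plan is to reuse the dual-space integration path from the proof of \cref{lem:Lipschitz continuity}: set $z(t):=\nabla h^*(\nabla h(x)+tu)$ for $t\in[0,1]$. Because $\nabla h(\cX_j)$ is convex, the whole path stays in $\cX_j$, so at every point of it $\mu I\preceq\nabla^2 h(z(t))\preceq \cL I$. Along the path, $\nabla h(z(t))=\nabla h(x)+tu$ and $z'(t)=[\nabla^2 h(z(t))]^{-1}u$.

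\textbf{Divergence bounds.} I would work with the dual representation $\cD_{h}(y,x)=\cD_{h^*}(\nabla h(x),\nabla h(y))$, the standard conjugate identity for Legendre kernels. Taylor's formula with integral remainder, applied to $h^*$ along the dual segment, gives
\[
\cD_{h^*}(\nabla h(x),\nabla h(y))=\int_0^1 t\,\big\langle u,\nabla^2 h^*(\nabla h(x)+tu)\,u\big\rangle\,\mathrm{d}t .
\]
Here $\nabla^2h^*(\nabla h(x)+tu)=[\nabla^2h(z(t))]^{-1}$, and its eigenvalues lie in $[1/\cL,1/\mu]$ because $z(t)\in\cX_j$. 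Since $\int_0^1 t\,\mathrm{d}t=\tfrac12$, this yields $\|u\|^2/(2\cL)\le\cD_{h}(y,x)\le\|u\|^2/(2\mu)$, with $\|u\|=\rho_{h_j}(x,y)$.

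If that identity feels too quick, a direct primal check works too. Write $\cD_h(y,x)=\int_0^1\langle\nabla h(z(t))-\nabla h(x),z'(t)\rangle\,\mathrm{d}t=\int_0^1 t\,\langle u,[\nabla^2h(z(t))]^{-1}u\rangle\,\mathrm{d}t$. This holds because $\frac{d}{dt}\big[h(z(t))-\langle\nabla h(x),z(t)\rangle\big]=\langle\nabla h(z(t))-\nabla h(x),z'(t)\rangle$, and the boundary terms reproduce exactly $h(y)-h(x)-\langle\nabla h(x),y-x\rangle$. Both routes give the same integral.

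\textbf{Distance bound.} Integrating along the same path, $y-x=\int_0^1 z'(t)\,\mathrm{d}t=\int_0^1[\nabla^2 h(z(t))]^{-1}u\,\mathrm{d}t$. The norm of each integrand is at most $\|u\|/\mu$, so $\|x-y\|\le\rho_{h_j}(x,y)/\mu$.

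The only step that needs care is justifying that the curve stays inside $\cX_j$, so that the eigenvalue bounds $\mu,\cL$ apply along the whole path even though $\cX_j$ itself need not be convex in the primal space. This is precisely what convexity of $\nabla h_j(\cX_j)$ provides, together with $\nabla h^*=(\nabla h)^{-1}$ on $\idom{h}$, which holds by essential smoothness and strict convexity. The remaining steps are routine integral estimates.
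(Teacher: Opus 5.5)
Your proof is correct and follows the paper's argument. It uses the same dual-space path $z(t)=\nabla h^*(\nabla h(x)+tu)$, the same representation $\mathcal{D}_{h}(y,x)=\int_0^1 t\,\langle u,[\nabla^2h(z(t))]^{-1}u\rangle\,\mathrm{d}t$ (your ``direct primal check'' is exactly the paper's differentiation of $g(t)=\mathcal{D}_h(z(t),x)$), and the same integration of $\dot z(t)$ for the distance bound; your primary route through $\mathcal{D}_h(y,x)=\mathcal{D}_{h^*}(\nabla h(x),\nabla h(y))$ and Taylor's formula for $h^*$ only repackages this and lands on the identical integral.
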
 
\begin{proof}
In the proof, let us assume $h$ has $m=1$ block without loss of generality. If $m\geq2$, it suffices to repeat the same process for each $h_j$. Therefore, we will omit the subscript $[j]$ or $j$. Now, define  $g(t):=\cD(z(t),x)$, $t\in[0,1]$, with $z(t)$ being the same parameterized in the proof of \cref{lem:Lipschitz continuity}. Clearly, $g(0) = \cD(x,x) = 0$ and $g(1) = \cD(y,x)$. Then direct computation gives
\begin{equation}
\label{lem:important bounds 1}
\cD(y,x) = g(1)- g(0) = \int_0^1 g^\prime (t) \; \mathrm{d}t = \int_0^1 t \langle \nabla h(y) - \nabla h(x),\dot z(t) \rangle \;\mathrm{d}t. 
\end{equation}
Again, using $\nabla^2 h^*(w)=[\nabla^2 h(\nabla h^*(w))]^{-1}$ we get 
$\dot z(t)=[\nabla^2 h(z(t))]^{-1} (\nabla h(y)-\nabla h(x))$ and hence
\begin{equation}
    \label{lem:important bounds 2}
    \frac{\rho_h(x,y)^2}{\cL(\cX)}\leq \langle \nabla h(y) - \nabla h(x),\dot z(t) \rangle \leq \frac{\rho_h(x,y)^2}{\mu(\cX)}.
\end{equation} 
Combining \eqref{lem:important bounds 1} and \eqref{lem:important bounds 2} proves $\frac{\rho_h(x,y)^2}{2\cL(\cX)}\leq \cD(y,x) \leq \frac{\rho_h(x,y)^2}{2\mu(\cX)}$. Next, by $y-x = \int_0^1 \dot z(t) \mathrm{d}t$, we have 
\[
\qquad\qquad\qquad\,\|x-y\| \leq \int_0^1 \|\dot z(t)\| \; \mathrm{d}t \leq \rho_h(x,y) \cdot \int_0^1 \big\|[\nabla^2 h(z(t))]^{-1} \big\| \;\mathrm{d}t \leq \frac{\rho_h(x,y)}{\mu(\cX)},
\]
which completes the proof.
\end{proof}

\section{Algorithm and preliminary properties}
After introducing the tools of DKC-regularity and dual Lipschitz continuity, we illustrate how they can be used to analyze broader class of algorithms under relative smoothness by considering the 
random reshuffling mirror descent (RRMD, \cref{alg:shuffle}). In each epoch, RRMD generates a permutation $\pi^k$ of $\intset{n}$ and then performs mirror descent update \eqref{update} incrementally following the permuted data order. The permutation $\pi^k$ introduces a (biased) without-replacement sampling scheme to RRMD, and is the key difference to SMD that adopts an unbiased with-replacement sampling scheme, see \cite{nemirovski2009robust,ghadimi2012optimal}. 

In particular, when the permutations $\pi^k\equiv \pi^1$ remains identical for all $k$, we call \cref{alg:shuffle} the \emph{incremental mirror descent} method (IMD). This also covers the case where the data points are shuffled only once at the beginning of the algorithm. We say that the permutation $\pi^k$ is generated by \emph{uniform shuffling} scheme if $\{\pi^k_1,\ldots,\pi^k_n\}$ are sampled \emph{uniformly without replacement} from $\intset{n}$. 

\begin{algorithm}[h]
   \caption{Random Reshuffling Mirror Descent (RRMD)}
   \label{alg:shuffle}
\begin{algorithmic}
   \State {\bfseries Input:} kernel function $h:\Rd\to\R$, step-size $\{\alpha_k\}_k\subset\R_{++}$, initial point $x^1\in\Rd$
   \Repeat
   \State generate a permutation $\pi^k=\{\pi^k_1,\ldots,\pi^k_n\}$ of $\intset{n}:=\{1,\ldots,n\}$
   \State set $x^{k,1} = x^k$
   \For{$i=1$ {\bfseries to} $n$\vspace{-0.6cm}}
   \State 
   \begin{equation}
       \label{update}
            x^{k,i+1} \in\argmin_{u\in\mathcal{Z}} \left\{  \langle u, \nabla f_{\pi^k_i}(x^{k,i})\rangle + \frac{1}{\alpha_k} \cD(u,x^{k,i})\right\}
   \end{equation}
   \EndFor
   \State set $x^{k+1} = x^{k,n+1}$
   \Until{termination condition meets}
\end{algorithmic}
\end{algorithm}

By the dual space interpretation of mirror descent and the essential smoothness of $h$ on $\mathcal{Z}$, the iterations of \cref{alg:shuffle} will also  satisfy the following relationships:
\begin{equation}
\label{eqn:rrmd-update}
\nabla h(y^{k,i+1}) =\nabla h(y^{k,i}) - \alpha_k \nabla f_{\pi_i^k}(y^{k,i})\quad\mbox{and}\quad\nabla h(x^{k+1}) =\nabla h(x^{k}) - \alpha_k \sum_{i=1}^n \nabla f_{\pi_i^k}(y^{k,i}).
\end{equation}


\subsection{Assumptions}  
To simplify the notation and presentation, from now on, we will default the discussion to the single-block case $m=1$, which allows us to drop all the subscripts in the block notations $x_{[j]},\cX_j,\mu_j(\cdot),\cL_j(\cdot)$ as well as the frequently appearing summation over $j\in\intset{m}.$ This will greatly shorten expressions in the inequalities and will provide better readability and clarity, while all the proof can be parallelly extended to the general $m$-block kernel cases by directly adapting the proof to a block-by-block style. 
Next, we formally introduce a few basic assumptions. 
\begin{assumption}\label[assumption]{assumption 1}
There is a DKC-regular and $\mu$-strongly convex kernel $h$ associated with the feasible region $\mathcal{Z}$ and a constant $\sL>0$ so that each $f_i$ is $\sL$-smooth relative to $h$.
\end{assumption}

The relative smoothness condition (\Cref{assumption 1}) has become the standard framework for analyzing iterative methods that go beyond global Lipschitz continuity; see, e.g., \cite{ahookhosh2021bregman, bauschke2017descent, bolte2018first, latafat2022bregman, lu2018relatively}.  The validity of the DKC regularity has been extensively discussed. 

\begin{assumption}
    \label[assumption]{assumption 1.5} 
    There is an $f^*\in\R$ such that $f$ is lower bounded by $f^*$ on $\mathcal{Z}$. We also assume there exists a constant $\sG > 0$ such that 
    $\|\nabla f_i(x)\| \leq \sG$ for all $x\in\mathcal{Z}$ and $i\in \intset{n}$. 
\end{assumption}

We impose the bounded gradient assumption to clearly demonstrate how dual Lipschitz continuity and DKC regularity can be leveraged to establish complexity results and last-iterate convergence. \Cref{assumption 1.5}  helps isolate the key theoretical contributions from additional technical complications that arise when handling unbounded gradients. In \cref{subsec:relax bdd grad}, we also discuss how this assumption can be substantially relaxed. And the proof sketch is demonstrated therein.

\subsection{Preliminary properties}
Based on the preliminary discussions, we establish the elementary properties of RRMD. 
First, we introduce the set $\cX^k$ and the upper bound $\bar\alpha$ of the step-size:
\begin{equation}\label{def:Xk}
    \cX^k:=\big\{u\in\idom{h}: \rho_h(u,x^k) \leq \delta/2 \big\}\quad \text{and} \quad \bar\alpha:= \min\Big\{\frac{1}{2\sL\kappa_\delta},\frac{\delta}{2\sG}\Big\}.
\end{equation}
Therefore, using the update formula \eqref{eqn:rrmd-update} and \Cref{assumption 1.5}, it is clear that $0<\alpha_k \leq \bar \alpha/n$ indicates
\[\|\nabla h(y^{k,i+1}) - \nabla h(x^k)\| \leq \alpha_k   {\sum}_{j=1}^i \|\nabla f_{\pi^k_j}(y^{k,j})\| \leq \delta/(2n\sG) \cdot  i \sG \leq \delta/2, \qquad \forall i\leq n.\]
Then using \cref{assumption 1}, we 
obtain the following lemma that characterizes the update region of RRMD in each epoch and the condition number of the kernel $h$ on this region.

\begin{lemma}
\label[lemma]{lem:region}
Given \cref{assumption 1,assumption 1.5}, and let $\{x^k\},\{y^{k,i}\}$ be generated by \cref{alg:shuffle} with step-sizes $0<\alpha_k \leq \bar \alpha/n$. Then $y^{k,i}\in\cX^k$ for all $i\in\intset{n+1}$ and $k\in\N$ and $\kappa(\cX^k) \leq \kappa_\delta$.
\end{lemma}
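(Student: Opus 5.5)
The plan is to prove the two claims in order: first that every inner iterate lies in $\cX^k$, then that the condition number there is bounded. Throughout, $y^{k,i}$ denotes the inner iterates $x^{k,i}$, so $y^{k,1}=x^k$ and $y^{k,n+1}=x^{k+1}$.

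\textbf{Step 1: membership in $\cX^k$.} Essential smoothness of $h$ on $\cZ$ guarantees that the minimizer in \eqref{update} exists, is unique, and lies in $\idom{h}$, so every $y^{k,i}$ lies in $\idom{h}$. The optimality condition then gives the dual update \eqref{eqn:rrmd-update}. Telescoping this update from $1$ to $i-1$ yields
\[
\nabla h(y^{k,i})-\nabla h(x^k)=-\alpha_k\sum_{j=1}^{i-1}\nabla f_{\pi^k_j}(y^{k,j}).
\]
The triangle inequality and the gradient bound $\sG$ from \cref{assumption 1.5} then give
\[
\rho_h(y^{k,i},x^k)\le \alpha_k (i-1)\sG\le \frac{\bar\alpha}{n}\, n\sG\le \frac{\delta}{2},
\]
where the last inequality uses $\bar\alpha\le \delta/(2\sG)$ from \eqref{def:Xk}. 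Hence $y^{k,i}\in\cX^k$ for all $i\in\intset{n+1}$, which is the computation displayed just before the lemma.

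\textbf{Step 2: bounding $\kappa(\cX^k)$.} For any $u,v\in\cX^k$ the triangle inequality for $\rho_h$ gives
\[
\rho_h(u,v)\le \rho_h(u,x^k)+\rho_h(x^k,v)\le \delta,
\]
so $\diam_h(\cX^k)\le\delta$. We are in the single-block case $m=1$, so DKC regularity (\cref{def:kernel cond}) with this $\delta$ yields $\kappa(\cX^k)\le\kappa_\delta$.

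\textbf{Main obstacle: compactness.} \cref{def:mu and L} is stated for compact $\cX\subseteq\idom{h}$, and $\cX^k$ may fail to be closed in $\R^d$. I would handle this in the following order.
\begin{itemize}
\item Note that $\cX^k=\nabla h^*(\bar{\mathcal B})$, where $\bar{\mathcal B}$ is the closed dual ball of radius $\delta/2$ around $\nabla h(x^k)$, intersected with $\mathrm{Im}(\nabla h)$.
\item If $\bar{\mathcal B}\subseteq \mathrm{Im}(\nabla h)$, then continuity of $\nabla h^*$ makes $\cX^k$ compact, and Step 2 applies directly.
\item Otherwise, apply Step 2 to the closed sub-balls of radius $r<\delta/2$. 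Their images are compact, so each has condition number at most $\kappa_\delta$ uniformly in $r$. Taking the supremum over $r$ defines $\kappa(\cX^k)$ as a sup/inf over $\cX^k$, and it is still bounded by $\kappa_\delta$.
\end{itemize}
Alternatively, one can read DKC as applying to any set with bounded dual diameter, with $\mu$ and $\cL$ interpreted as inf and sup; under that reading Step 2 needs no modification.
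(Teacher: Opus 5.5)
Your proof is correct and is essentially the paper's: it telescopes the dual update \eqref{eqn:rrmd-update} with $\|\nabla f_i\|\le\sG$ and $\alpha_k\le\delta/(2n\sG)$ to get $\rho_h(y^{k,i},x^k)\le\delta/2$, then applies DKC to $\cX^k$, whose $\rho_h$-diameter is at most $\delta$ by the triangle inequality. On your compactness aside, the ``otherwise'' branch never arises. The $\mu$-strong convexity of $h$ gives $u\mapsto h(u)-\langle w,u\rangle$ an interior minimizer for every $w$, so $\mathrm{Im}(\nabla h)=\R^d$ and $\cX^k=\nabla h^*(\bar{\mathcal B})$ is always compact. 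This matters because that branch would not work as written: concentric sub-balls of radius $r<\delta/2$ can also leave $\mathrm{Im}(\nabla h)$, and applying DKC to two-point sets $\{u,v\}\subseteq\cX^k$ would be the clean fallback.
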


That is, as long as $\alpha_k$ is bounded above by $\bar \alpha$, the entire epoch remains within a reasonable region $\cX^k$, on which the condition number of DKC-regular kernel $h$ is uniformly controlled by $\kappa_\delta$. Next, we present the preliminary descent of the algorithm.

\begin{lemma} 
\label[lemma]{lem:preliminary descent}
Under the same setting of \cref{lem:region}, the iterates generated by \cref{alg:shuffle}  satisfies
\begin{align*}
  f(x^{k+1}) &+ \frac{\cD(x^k,x^{k+1})}{2n\alpha_k} \leq f(x^k)  + \underbracket{\frac{\kappa_{\delta}\cdot\alpha_k}{2\mu(\cX^k)} {\sum}_{i=1}^n\| \nabla f_{\pi^k_i}(x^{k}) - \nabla f_{\pi^k_i}(y^{k,i}) \|^2}_{=:\err_k}.
\end{align*}  
\end{lemma}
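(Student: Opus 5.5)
We prove the inequality as a single mirror-descent step from $x^k$ to $x^{k+1}$ with the exact full gradient, plus a correction for the gradients actually used. By \eqref{eqn:rrmd-update}, $\nabla h(x^{k+1}) = \nabla h(x^k) - n\alpha_k(\nabla f(x^k) + e^k)$ with
\[
e^k := \frac1n{\sum}_{i=1}^n \big(\nabla f_{\pi^k_i}(y^{k,i}) - \nabla f_{\pi^k_i}(x^k)\big),
\]
using $\sum_i \nabla f_{\pi^k_i}(x^k) = n\nabla f(x^k)$. Hence $\nabla f(x^k) = -\frac{1}{n\alpha_k}\big(\nabla h(x^{k+1})-\nabla h(x^k)\big) - e^k$.

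\textbf{Descent step.} Since $f$ is $\sL$-smooth relative to $h$, the extended descent lemma \eqref{eq:extended descent} gives
\[
f(x^{k+1}) \le f(x^k) + \langle \nabla f(x^k), x^{k+1}-x^k\rangle + \sL\,\cD(x^{k+1},x^k).
\]
Substituting the expression for $\nabla f(x^k)$ and applying the three-point identity
\[
\langle \nabla h(x^{k+1})-\nabla h(x^k), x^{k+1}-x^k\rangle = \cD(x^{k+1},x^k)+\cD(x^k,x^{k+1})
\]
yields
\[
f(x^{k+1}) \le f(x^k) - \frac{1}{n\alpha_k}\big(\cD(x^{k+1},x^k)+\cD(x^k,x^{k+1})\big) - \langle e^k, x^{k+1}-x^k\rangle + \sL\,\cD(x^{k+1},x^k).
\]
Since $n\alpha_k\le\bar\alpha\le 1/(2\sL\kappa_\delta)$, we have $\sL\le 1/(2n\alpha_k)$, so the $\sL\,\cD(x^{k+1},x^k)$ term is absorbed. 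This leaves at least $\frac{1}{2n\alpha_k}\big(\cD(x^{k+1},x^k)+\cD(x^k,x^{k+1})\big)$ of negative Bregman terms.

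\textbf{Cross term.} This is the main obstacle, because the error has to be traded against Bregman divergences with the right constants. By \cref{lem:region}, every $y^{k,i}$ and $x^{k+1}$ lies in $\cX^k$, which is a dual ball and hence has convex image $\nabla h(\cX^k)$. So \cref{lem:important bounds} applies on $\cX^k$ and gives
\[
\|x^{k+1}-x^k\|\le \frac{\rho_h(x^k,x^{k+1})}{\mu(\cX^k)} \quad\text{and}\quad \rho_h(x^k,x^{k+1})^2\le 2\cL(\cX^k)\,\cD(x^{k+1},x^k)
\]
(and likewise for $\cD(x^k,x^{k+1})$). Combining these,
\[
|\langle e^k, x^{k+1}-x^k\rangle| \le \|e^k\|\,\frac{\sqrt{2\cL(\cX^k)\,\cD(x^{k+1},x^k)}}{\mu(\cX^k)}.
\]
Young's inequality then bounds this by
\[
\frac{1}{2n\alpha_k}\cD(x^{k+1},x^k) + n\alpha_k\,\frac{\cL(\cX^k)}{\mu(\cX^k)^2}\,\|e^k\|^2 .
\]
Using $\cL/\mu=\kappa(\cX^k)\le\kappa_\delta$ and Jensen, $n\|e^k\|^2\le \sum_i\|\nabla f_{\pi^k_i}(x^k)-\nabla f_{\pi^k_i}(y^{k,i})\|^2$. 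This gives a bound of the form $\frac{\kappa_\delta\,\alpha_k}{\mu(\cX^k)}\sum_i\|\cdot\|^2$.

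\textbf{Constants.} The bound above carries a factor $1$ where the statement has $\tfrac12$. To recover $\tfrac{1}{2\mu}$, I would tune the Young split so the cross term consumes the full $\cD(x^{k+1},x^k)$ budget, i.e.\ weight $\frac{1}{n\alpha_k}$ rather than $\frac{1}{2n\alpha_k}$. That choice leaves $\frac{1}{2n\alpha_k}\cD(x^k,x^{k+1})$ on the left, which is exactly the statement's left-hand side, and halves the error coefficient to $\frac{\kappa_\delta\alpha_k}{2\mu(\cX^k)}$.

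The delicate bookkeeping is choosing the split so that the $\sL$-term absorption (via $\bar\alpha$) and the Young term together leave exactly $\frac{1}{2n\alpha_k}\cD(x^k,x^{k+1})$. If the constants are slightly off, the coefficient of $\err_k$ changes only by an absolute factor.
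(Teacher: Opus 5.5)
Your skeleton is the paper's: the extended descent lemma, rewriting $\nabla f(x^k)$ through the dual update, the three-point identity, and Young's inequality on the cross term, with $\|x^{k+1}-x^k\|$ controlled on $\cX^k$ by \cref{lem:important bounds}. Aggregating into $e^k$ and using Jensen, instead of applying Young term by term as the paper does, is harmless.

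The gap is in the final accounting, which as written does not close. After the three-point identity, the total coefficient available in front of $\cD(x^{k+1},x^k)$ is $\frac{1}{n\alpha_k}$. You already spent half of it absorbing $\sL\,\cD(x^{k+1},x^k)$, and your first split uses the other half, which correctly yields $2\err_k$. Your retuned split spends the full $\frac{1}{n\alpha_k}\cD(x^{k+1},x^k)$ on Young. That does produce exactly $\err_k$, but the $\sL$-term is then no longer absorbed, and what you actually have is
\[
f(x^{k+1}) + \frac{\cD(x^k,x^{k+1})}{n\alpha_k} \le f(x^k) + \sL\,\cD(x^{k+1},x^k) + \err_k .
\]
Passing from this to the statement needs $\sL\,\cD(x^{k+1},x^k)\le\frac{1}{2n\alpha_k}\cD(x^k,x^{k+1})$. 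That is a comparison of the two Bregman divergences in opposite orders, and your proposal never makes it. The closing remark about absolute factors does not rescue this: the lemma fixes the coefficient of $\err_k$ at $1$, and \cref{prop:descent property} uses it with that coefficient.

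Either of two short repairs works.

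(i) The paper's route. Adding the two lower bounds of \cref{lem:important bounds} gives $\rho_h^2(x^k,x^{k+1})\le\cL(\cX^k)\,[\cD(x^{k+1},x^k)+\cD(x^k,x^{k+1})]$. Hence $\mu(\cX^k)\|x^{k+1}-x^k\|^2\le\kappa_\delta\,[\cD(x^{k+1},x^k)+\cD(x^k,x^{k+1})]$. Young then costs $\frac{1}{2n\alpha_k}$ of \emph{each} divergence and gives exactly $\err_k$. The remaining $\frac{1}{2n\alpha_k}\cD(x^{k+1},x^k)$ absorbs $\sL\,\cD(x^{k+1},x^k)$ using only $\sL\le\frac{1}{2n\alpha_k}$, and $\frac{1}{2n\alpha_k}\cD(x^k,x^{k+1})$ is left on the left-hand side.

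(ii) Keep your split. Apply \cref{lem:important bounds} in both orders to get $\cD(x^{k+1},x^k)\le\frac{\rho_h^2(x^k,x^{k+1})}{2\mu(\cX^k)}\le\kappa_\delta\,\cD(x^k,x^{k+1})$. Then use the full step-size bound $n\alpha_k\le\bar\alpha\le\frac{1}{2\sL\kappa_\delta}$, including the factor $\kappa_\delta$ that you dropped. This gives $\sL\,\cD(x^{k+1},x^k)\le\sL\kappa_\delta\,\cD(x^k,x^{k+1})\le\frac{1}{2n\alpha_k}\cD(x^k,x^{k+1})$, and the display above becomes the statement.

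Either fix is one line, but without it the stated constant is not established.
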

The proof of this lemma can be found in Appendix \ref{proof:lem:preliminary descent}. It provides the preliminary descent-type property characterized by the Bregman difference $\cD(x^{k},x^{k+1})$. 

\subsection{Stationarity measure}
Different from the deterministic setting \cite{bauschke2017descent,bolte2018first}, $\cD(x^{k},x^{k+1})$ here does not directly relate to the gradient. Therefore, we introduce the following stationarity measure tailored for stochastic mirror descent methods.
\begin{definition}\label[definition]{def:stationary}
\emph{Let $\{x^k\}$ be generated by \cref{alg:shuffle} with step-size $\{\alpha_k\}_k$, we define
\begin{equation}
    \label{eq:station measure}
    \cG(x^k) := \frac{\cD(x^k,\hat x^k)}{(n\alpha_k)^2}\quad \text{where} \quad \hat x^k:=\argmin_{u\in\mathcal{Z}}\Big\{\langle \nabla f(x^k),u \rangle + \frac{\cD(u,x^k)}{n\alpha_k}\Big\}.
\end{equation}
}
\end{definition}
When  $h(x):=\frac12\|x\|^2$,  stationarity measure $\cG(x^k)$ reduces to  $\|\nabla f(x^k)\|^2$. 
Then, we establish a crucial relationship between $\cG(x^k)$ and $\cD(x^k,x^{k+1})$. The proof can be found in Appendix \ref{proof of lem:valid complexity}.
\begin{lemma}\label[lemma]{lem:valid complexity}
Under the setting of \cref{lem:region},  we have  $\hat{x}^k\in\cX^k$ and 
\[\frac{\|\nabla f(x^k)\|^2}{\mu(\cX^k)} \leq 2\kappa_\delta \cdot \cG(x^k)\quad \text{and} \quad 
\cG(x^k) \leq 2\kappa_\delta \cdot \frac{\cD(x^k, x^{k+1})}{(n\alpha_k)^{2}} + \frac{2\err_k}{n\alpha_k}.\]
\end{lemma}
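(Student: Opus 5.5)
Throughout, write $\beta:=n\alpha_k$, so that the optimality condition of \eqref{eq:station measure} reads $\nabla h(\hat x^k)=\nabla h(x^k)-\beta\nabla f(x^k)$. The plan works in the dual space, where this is an explicit formula, and derives every estimate from \cref{lem:important bounds} on $\cX^k$ together with $\kappa(\cX^k)\le\kappa_\delta$ from \cref{lem:region}. The set $\cX^k$ is a $\rho_h$-ball, so $\nabla h(\cX^k)$ is the intersection of a Euclidean ball with $\mathrm{Im}(\nabla h)$. For the kernels at hand this image is convex, so the hypotheses of \cref{lem:important bounds} hold on $\cX^k$.

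\emph{Step 1: $\hat x^k\in\cX^k$.} Compute
\[
\rho_h(\hat x^k,x^k)=\beta\|\nabla f(x^k)\|\le n\alpha_k\,\sG\le \bar\alpha\,\sG\le \delta/2 .
\]
This uses \cref{assumption 1.5} and the fact that $\|\nabla f\|\le\sG$, since $f$ is the average of the $f_i$.

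\emph{Step 2: the first inequality.} Apply the lower bound of \cref{lem:important bounds} with $y=x^k$ and $x=\hat x^k$:
\[
\cD(x^k,\hat x^k)\ge \frac{\rho_h^2}{2\cL(\cX^k)}=\frac{\beta^2\|\nabla f(x^k)\|^2}{2\cL(\cX^k)}.
\]
Dividing by $\beta^2$ and using $\cL(\cX^k)\le\kappa_\delta\,\mu(\cX^k)$ gives $\|\nabla f(x^k)\|^2/\mu(\cX^k)\le 2\kappa_\delta\,\cG(x^k)$.

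\emph{Step 3: the second inequality.} Start from the upper bound of \cref{lem:important bounds}:
\[
\cG(x^k)\le \frac{\rho_h(x^k,\hat x^k)^2}{2\mu(\cX^k)\beta^2}.
\]
By \eqref{eqn:rrmd-update},
\[
\nabla h(x^k)-\nabla h(x^{k+1})=\alpha_k\sum_{i=1}^n\nabla f_{\pi_i^k}(y^{k,i}).
\]
Hence
\[
\nabla h(x^k)-\nabla h(\hat x^k)=\bigl(\nabla h(x^k)-\nabla h(x^{k+1})\bigr)+\alpha_k\sum_{i=1}^n\bigl(\nabla f_{\pi_i^k}(x^k)-\nabla f_{\pi_i^k}(y^{k,i})\bigr),
\]
because $\sum_i\nabla f_{\pi_i^k}(x^k)=n\nabla f(x^k)$. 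Apply $\|a+b\|^2\le 2\|a\|^2+2\|b\|^2$, then Cauchy--Schwarz to the sum:
\[
\alpha_k^2\Bigl\|\sum_{i=1}^n(\cdots)\Bigr\|^2\le n\alpha_k^2\sum_{i=1}^n\|\cdots\|^2=\frac{2n\alpha_k\mu(\cX^k)}{\kappa_\delta}\,\err_k .
\]
For the first piece, $x^{k+1}=y^{k,n+1}\in\cX^k$ by \cref{lem:region}, so \cref{lem:important bounds} gives
\[
\rho_h(x^k,x^{k+1})^2\le 2\cL(\cX^k)\,\cD(x^k,x^{k+1})\le 2\kappa_\delta\,\mu(\cX^k)\,\cD(x^k,x^{k+1}).
\]
Combining the two pieces:
\[
\cG(x^k)\le \frac{2\kappa_\delta\,\cD(x^k,x^{k+1})}{\beta^2}+\frac{2n\alpha_k\,\err_k}{\kappa_\delta\,\beta^2}.
\]
Since $\kappa_\delta\ge1$ and $\beta=n\alpha_k$, the second term is at most $2\err_k/(n\alpha_k)$, which yields the claim.

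\emph{Main obstacle.} The delicate point is matching the argument orders in $\cD(\cdot,\cdot)$ with the orientation in \cref{lem:important bounds}. The divergence there is written $\cD_{h}(y,x)$, and both bounds depend only on the symmetric quantity $\rho_h$ together with the interval $[\mu,\cL]$. I would check that the dual-segment argument is symmetric in $x$ and $y$. It is, since both points lie in $\cX^k$ and the curve stays in $\cX^k$, so either ordering is covered. The constants then line up exactly as above.
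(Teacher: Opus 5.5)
Your proof is correct. The membership $\hat x^k\in\cX^k$ and the first inequality are obtained exactly as in the paper, but for the second inequality you take a different route.

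The paper starts from the three points identity $\cD(x^k,\hat x^k)-\cD(x^k,x^{k+1})-\cD(x^{k+1},\hat x^k)=\langle x^k-x^{k+1},\nabla h(x^{k+1})-\nabla h(\hat x^k)\rangle$. It bounds the cross term by a weighted Young inequality, combined with the primal estimate $\|x^k-x^{k+1}\|\le \rho_h(x^k,x^{k+1})/\mu(\cX^k)$ from \cref{lem:important bounds}, and reaches \eqref{eq:bregman bound 1}. Only then does it pass to the dual space to get $\cD(x^{k+1},\hat x^k)\le n\alpha_k\err_k/\kappa_\delta$.

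You go to the dual space at once. You bound $\cD(x^k,\hat x^k)\le\rho_h(x^k,\hat x^k)^2/(2\mu(\cX^k))$, split $\nabla h(x^k)-\nabla h(\hat x^k)$ into $\nabla h(x^k)-\nabla h(x^{k+1})$ plus the gradient-deviation sum, and convert the first piece back via $\rho_h^2\le 2\cL(\cX^k)\cD$.

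Your version is shorter and needs only the two-sided Bregman/dual-distance sandwich: no three points identity and no primal distance bound. It also gives the error term with an extra factor $1/\kappa_\delta$, namely $2\err_k/(\kappa_\delta n\alpha_k)$. The paper's version incurs the condition number only through the cross term, so its coefficient on $\cD(x^k,x^{k+1})$ is $1+\kappa_\delta$ before being rounded up to $2\kappa_\delta$, whereas yours is $2\kappa_\delta$ directly.

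Both arguments rely in the same way on $\nabla h(\cX^k)$ being convex. This holds because, for a Legendre kernel, $\mathrm{Im}(\nabla h)=\idom{h^*}$ is convex, and $\nabla h(\cX^k)$ is its intersection with a Euclidean ball. Your orientation concern is unnecessary, since \cref{lem:important bounds} holds for every ordered pair of points in the set.
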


\subsection{Error estimates and approximate descent property}
Let $(\Omega,\cF_k,\Prob)$ be the underlying probability space of the stochastic process $\{x^k\}$ generated by \cref{alg:shuffle}, where the filtration $\cF_k:=\sigma(x^1,x^2,\ldots,x^k)$ has a deterministic initial point $x^1\in\Rd$. With $\Exp_k[\cdot]:=\Exp[\cdot|\cF_k]$ denoting the conditional expectation, and the fact that $\mu(\cX^k)\geq\mu$ due to \cref{assumption 1}, we bound the stochastic error $\err_k$, with derivation in Appendix \ref{proof:lem:err bound}. 
\begin{proposition}[Stochastic error bound]
\label[proposition]{lem:err bound}
Consider the same setting of \cref{lem:region}.
\begin{enumerate}[label=\textup{\textrm{(\alph*)}},leftmargin=2.7em,topsep=2pt,itemsep=.5ex,partopsep=0ex]
 \item Then, for arbitrary permutation $\pi^k$, it holds deterministically that
$\err_k \leq \frac{(\kappa_\delta  \sL\sG)^2}{2\mu} \cdot n^3 \alpha_k^3.$
\item  Moreover, if $\pi^k$ is generated by uniform shuffling scheme, it holds in expectation that
\[
\Exp_k[\err_k]\leq 4\sL^2 (\kappa_\delta  n\alpha_k)^3\cdot \cG(x^k) + \frac{4(\kappa_\delta  \sL\sG)^2}{\mu} \cdot n^2\alpha_k^3.
\]
\end{enumerate}
\end{proposition}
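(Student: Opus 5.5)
Both parts reduce the gradient deviations in $\err_k$ to dual-space distances between inner iterates and $x^k$, via the dual Lipschitz continuity of \cref{lem:Lipschitz continuity}. By \cref{lem:region}, every $y^{k,i}$ lies in $\cX^k$, a $\rho_h$-ball of radius $\delta/2$ around $x^k$. Its image $\nabla h(\cX^k)$ is the intersection of a Euclidean ball with $\mathrm{Im}(\nabla h)$, which is convex (a Legendre function has convex dual domain interior). Its dual diameter is at most $\delta$, so $\kappa(\cX^k)\le\kappa_\delta$. \cref{lem:Lipschitz continuity} then gives
\[
\|\nabla f_{\pi_i^k}(x^k)-\nabla f_{\pi_i^k}(y^{k,i})\|\le \sL\sqrt{\kappa_\delta}\,\|\nabla h(y^{k,i})-\nabla h(x^k)\|.
\]
By \eqref{eqn:rrmd-update}, the right-hand side equals $\sL\sqrt{\kappa_\delta}\,\alpha_k\|\sum_{j<i}\nabla f_{\pi^k_j}(y^{k,j})\|$.

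For part (a), I bound each summand by $\sG$, so the displacement is at most $(i-1)\alpha_k\sG$. Hence
\[
\err_k\le \frac{\kappa_\delta\alpha_k}{2\mu}\,\sL^2\kappa_\delta\alpha_k^2\sG^2\sum_{i=1}^n (i-1)^2 ,
\]
using $\mu(\cX^k)\ge\mu$. Since $\sum_{i=1}^n (i-1)^2\le n^3$, this gives the claimed $\frac{(\kappa_\delta\sL\sG)^2}{2\mu}n^3\alpha_k^3$.

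For part (b), I split the inner sum as
\[
\sum_{j<i}\nabla f_{\pi_j}(y^{k,j})=\sum_{j<i}\big(\nabla f_{\pi_j}(y^{k,j})-\nabla f_{\pi_j}(x^k)\big)+\sum_{j<i}\nabla f_{\pi_j}(x^k).
\]
\emph{Drift term.} Applying dual Lipschitz continuity again, the first sum is bounded by $\sL\sqrt{\kappa_\delta}\sum_{j<i}\rho_h(y^{k,j},x^k)$. Writing $e_i:=\rho_h(y^{k,i},x^k)^2$, this yields a recursion of the form
\[
e_i\le 2\alpha_k^2\sL^2\kappa_\delta\, n\sum_{j<i}e_j+2\alpha_k^2\Big\|\sum_{j<i}\nabla f_{\pi_j}(x^k)\Big\|^2 .
\]
Because $n\alpha_k\sL\kappa_\delta\le 1/2$, the drift coefficient summed over $i$ is at most $1/2$. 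Summing over $i$ and absorbing then gives $\sum_i e_i\le 4\alpha_k^2\sum_i\|\sum_{j<i}\nabla f_{\pi_j}(x^k)\|^2$.

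\emph{Sampling term.} I write $\sum_{j<i}\nabla f_{\pi_j}(x^k)=(i-1)\nabla f(x^k)+\sum_{j<i}\big(\nabla f_{\pi_j}(x^k)-\nabla f(x^k)\big)$. Under uniform shuffling, the standard without-replacement variance identity gives
\[
\Exp_k\Big\|\sum_{j<i}\big(\nabla f_{\pi_j}(x^k)-\nabla f(x^k)\big)\Big\|^2=\frac{(i-1)(n-i+1)}{n-1}\,\sigma_k^2\le \frac{n}{4}\cdot 4\sG^2 ,
\]
where $\sigma_k^2:=\frac1n\sum_l\|\nabla f_l(x^k)-\nabla f(x^k)\|^2\le 4\sG^2$. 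Summing over $i$, this part contributes $O(n^2\sG^2)$, while the deterministic part contributes $\sum_i (i-1)^2\|\nabla f(x^k)\|^2\le n^3\|\nabla f(x^k)\|^2$.

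Plugging back, $\Exp_k[\err_k]$ is bounded by a multiple of
\[
\frac{\kappa_\delta^2\sL^2\alpha_k^3}{\mu(\cX^k)}\Big(n^3\|\nabla f(x^k)\|^2+n^2\sG^2\Big).
\]
For the gradient piece I keep the factor $\mu(\cX^k)$ and invoke \cref{lem:valid complexity}, $\|\nabla f(x^k)\|^2/\mu(\cX^k)\le 2\kappa_\delta\cG(x^k)$; this produces the $\sL^2(\kappa_\delta n\alpha_k)^3\cG(x^k)$ term. For the variance piece I use $\mu(\cX^k)\ge\mu$.

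The main obstacle is the absorption step: the dual-Lipschitz inequality must be applied to points that stay in $\cX^k$, and the constants must be tracked so that $n\alpha_k\sL\kappa_\delta\le 1/2$ closes the recursion with the stated factor $4$. I would handle this with Young's inequality with weights of $2$ and the crude bound $\sum_{i}(i-1)\le n^2/2$, adjusting the constants if needed.
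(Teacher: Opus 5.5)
Your proposal is correct and follows essentially the paper's argument. Both proofs use dual Lipschitz continuity on $\cX^k$, split $\sum_{j<i}\nabla f_{\pi^k_j}(y^{k,j})$ into a drift part, the mean $(i-1)\nabla f(x^k)$ and a centered without-replacement part, absorb the drift after summing over $i$ via $n\alpha_k\sL\kappa_\delta\le 1/2$, and finish with \cref{lem:valid complexity}. The only difference is cosmetic: you absorb pathwise in $e_i=\rho_h(y^{k,i},x^k)^2$, whereas the paper absorbs in conditional expectation directly in the squared gradient deviations.

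To actually reach the stated factor $4$ in front of $\cG(x^k)$, fix two bounds. Replace your $\sum_{i=1}^n(i-1)^2\le n^3$ (which gives $8$) by $\sum_{i=1}^n(i-1)^2\le n^3/3$. Also, $\frac{(i-1)(n-i+1)}{n-1}$ is only bounded by $\frac{n^2}{4(n-1)}\le\frac n2$, not $\frac n4$, so pair it with $\sigma_k^2\le\sG^2$ rather than $4\sG^2$. With these, both constants fall within the claimed $4$.
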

Combining the previous results, we obtain the approximate descent property for RRMD.

\begin{proposition}[Approximate descent] \label[proposition]{prop:descent property}
Consider the same setting of \cref{lem:region}.
\begin{enumerate}[label=\textup{\textrm{(\alph*)}},leftmargin=2.7em,topsep=2pt,itemsep=.5ex,partopsep=0ex]
 \item Then, for arbitrary permutation $\pi^k$, it holds deterministically that
\begin{subequations}
    \begin{align}
        \label{eq:prop descent 0}
        f(x^{k+1}) + \frac{\cD(x^k,x^{k+1})}{2n\alpha_k} &\leq  f(x^k) + \frac{(\kappa_\delta  \sL\sG)^2}{2\mu} \cdot n^3 \alpha_k^3, \\[2mm]
        \label{eq:prop descent 1}
        f(x^{k+1}) + \frac{n\alpha_k}{4\kappa_\delta}\cdot \cG(x^k) &\leq f(x^k) + \frac{3(\kappa_\delta  \sL\sG)^2}{4\mu} \cdot n^3 \alpha_k^3.
    \end{align}
\end{subequations}
\item  Moreover, if $\pi^k$ is generated by uniform shuffling scheme, and we require in addition that $\alpha_k \leq 1/(7\kappa_\delta^{2} n \sL)$, then it holds in expectation that
\begin{equation}
\label{eq:prop descent 2}
\Exp[f(x^{k+1})] + \frac{n\alpha_k}{8\kappa_\delta} \cdot \Exp[\cG(x^k)]
\leq \Exp[f(x^k)] + \frac{6(\kappa_\delta  \sL\sG)^2}{\mu} \cdot n^2\alpha_k^3.
\end{equation} 
\end{enumerate} 
\end{proposition}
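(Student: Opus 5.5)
The plan is to combine \cref{lem:preliminary descent} with \cref{lem:valid complexity} and the two error bounds of \cref{lem:err bound}.

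\emph{Proof of \eqref{eq:prop descent 0}.} Insert the deterministic bound $\err_k \leq \frac{(\kappa_\delta\sL\sG)^2}{2\mu}n^3\alpha_k^3$ from \cref{lem:err bound}(a) into \cref{lem:preliminary descent}.

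\emph{Proof of \eqref{eq:prop descent 1}.} Rewrite the second inequality of \cref{lem:valid complexity} as
\[
\frac{\cD(x^k,x^{k+1})}{n\alpha_k} \geq \frac{n\alpha_k}{2\kappa_\delta}\,\cG(x^k) - \frac{\err_k}{\kappa_\delta}.
\]
Half of this, namely $\frac{\cD(x^k,x^{k+1})}{2n\alpha_k} \geq \frac{n\alpha_k}{4\kappa_\delta}\cG(x^k) - \frac{\err_k}{2\kappa_\delta}$, is substituted into \cref{lem:preliminary descent}. This gives
\[
f(x^{k+1}) + \frac{n\alpha_k}{4\kappa_\delta}\cG(x^k) \leq f(x^k) + \Big(1+\frac{1}{2\kappa_\delta}\Big)\err_k .
\]
Since $\kappa_\delta\geq 1$, the factor is at most $3/2$. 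Applying part (a) of \cref{lem:err bound} then gives the constant $\frac34$.

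\emph{Proof of \eqref{eq:prop descent 2}.} Start from the intermediate inequality with $\frac32\err_k$, before the deterministic error bound is applied. Take $\Exp_k$ and use \cref{lem:err bound}(b):
\[
\tfrac32\Exp_k[\err_k] \leq 6\sL^2\kappa_\delta^3 n^3\alpha_k^3\,\cG(x^k) + \frac{6(\kappa_\delta\sL\sG)^2}{\mu}\, n^2\alpha_k^3 .
\]
The first term has to be absorbed into the left-hand side $\frac{n\alpha_k}{4\kappa_\delta}\cG(x^k)$, keeping half of it. This requires
\[
6\sL^2\kappa_\delta^3 n^3\alpha_k^3 \leq \frac{n\alpha_k}{8\kappa_\delta}, \quad\text{i.e.,}\quad (n\alpha_k)^2 \leq \frac{1}{48\,\sL^2\kappa_\delta^4}.
\]
The step-size condition $\alpha_k\leq 1/(7\kappa_\delta^2 n\sL)$ gives $(n\alpha_k)^2\leq 1/(49\sL^2\kappa_\delta^4)$, which suffices. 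Note that $\cG(x^k)$ is $\cF_k$-measurable, since $\hat x^k$ depends only on $x^k$. Taking total expectation therefore yields \eqref{eq:prop descent 2}.

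\emph{Main obstacle.} The only delicate points are bookkeeping. First, \cref{lem:err bound}(b) must apply to $\Exp_k$ of the same $\err_k$ that appears in \cref{lem:preliminary descent}, which it does by definition. Second, the constants must be checked, including the use of $\kappa_\delta\geq1$ to reduce $1+\frac{1}{2\kappa_\delta}$ to $\frac32$.
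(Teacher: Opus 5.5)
Your proposal is correct and follows essentially the same route as the paper. You plug the deterministic bound of \cref{lem:err bound}(a) into \cref{lem:preliminary descent} to get \eqref{eq:prop descent 0}, combine with \cref{lem:valid complexity} and $\kappa_\delta\geq 1$ to reach the intermediate inequality with $\frac32\err_k$, and then either apply \cref{lem:err bound}(a), or take $\Exp_k$, apply \cref{lem:err bound}(b), and absorb the $\cG(x^k)$ term via $(n\alpha_k)^2\leq 1/(49\sL^2\kappa_\delta^4)\leq 1/(48\sL^2\kappa_\delta^4)$; your explicit factor $1+\frac{1}{2\kappa_\delta}$ and the measurability remark on $\cG(x^k)$ just make the paper's bookkeeping more visible.
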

\begin{proof}    
By \Cref{lem:preliminary descent} and \Cref{lem:err bound} (a), we have
\begin{equation} 
     f(x^{k+1}) + \frac{\cD(x^k,x^{k+1})}{2n\alpha_k} \leq f(x^k) + \mathcal{E}_k \leq f(x^k) + \frac{(\kappa_\delta  \sL\sG)^2}{2\mu} \cdot n^3 \alpha_k^3, \nonumber
\end{equation}
proving \eqref{eq:prop descent 0}. Together with \Cref{lem:valid complexity} and $\kappa_\delta\geq1$, 
the above inequality also implies
\begin{equation} 
f(x^{k+1}) + \frac{n\alpha_k}{4\kappa_\delta} \cdot \cG(x^k)\leq f(x^k) + \frac{3}{2}\cdot \err_k. \nonumber
\end{equation}
Applying \Cref{lem:err bound} (a) to the above inequality, we establish \eqref{eq:prop descent 1}. Next, taking conditional expectation $\Exp_k[\cdot]$ on the above inequality, and using \Cref{lem:valid complexity} and \Cref{lem:err bound} (b), we also have
\begin{align*}
    \Exp_k[f(x^{k+1})] + \frac{n\alpha_k}{4\kappa_\delta} \cdot \cG(x^k) \leq f(x^k) + 6\kappa_\delta^3 \sL^2 n^3\alpha_k^3\cdot \cG(x^k) + \frac{6(\kappa_\delta  \sL\sG)^2}{\mu} \cdot n^2\alpha_k^3.
\end{align*}
As $\alpha_k \leq 1/(7n\sL\kappa_\delta^{2})$, we know $6\kappa_\delta^3 \sL^2 n^3\alpha_k^3 \leq n\alpha_k/(8\kappa_\delta)$. Hence, the above estimate  simplifies to
\[
 \Exp_k[f(x^{k+1})] + \frac{n\alpha_k}{8\kappa_\delta} \cdot \cG(x^k) \leq f(x^k) + \frac{6(\kappa_\delta  \sL\sG)^2}{\mu} \cdot n^2\alpha_k^3.
\]
Finally, taking total expectation finalizes the proof. 
\end{proof}

\section{Complexity bounds}\label{subsec:complexity}
In this section, we derive the sample complexity for RRMD method under various sampling schemes for generating the permutations $\{\pi^k\}$. First, we consider the general arbitrary permutation that includes the incremental mirror descent (IMD) method, and then we will show how uniform shuffling scheme can further improve the result.  

\begin{theorem}\label{thm:complexity-IMD}
Let \cref{assumption 1,assumption 1.5} hold and let $\{x^k\}$ be generated by \cref{alg:shuffle} with $0\leq \alpha_k \leq \bar \alpha/n$. Let $\tilde x^T$ be randomly sampled from $\{x^1,\ldots,x^T\}$ with $\Prob(\tilde x^T=x^k) = \frac{\alpha_k}{\sum_{i=1}^T\alpha_i}$, then   
\begin{equation}
    \label{eq:thm complexity 1}
    \Exp[\cG(\tilde x^{T})] \leq  \frac{4\kappa_\delta \cdot (f(x^1)-f^*)}{{\sum}_{k=1}^{T}n\alpha_k} + \frac{3\kappa_\delta^3  \sL^2\sG^2}{\mu} \cdot \frac{{\sum}_{k=1}^{T}n^3 \alpha_k^3}{{\sum}_{k=1}^{T}n\alpha_k}.
\end{equation} 
In particular, if we choose a constant stepsize $\alpha_k\equiv\frac{1}{n T^{1/3}}$ and require $T\geq  \max\Big\{8\sL^3\kappa_\delta^3,\frac{8\sG^3}{\delta^3}\Big\}$, then $\Exp[\cG(\tilde x^{T})] \leq \cO(T^{-2/3})$, corresponding to an $\cO(n\epsilon^{-1.5})$ sample complexity to ensure $\mathbb{E}[\cG(\tilde{x}^T)]\leq \epsilon$.
\end{theorem}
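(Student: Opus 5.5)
The plan is to telescope the deterministic approximate descent inequality \eqref{eq:prop descent 1} of \cref{prop:descent property}(a). It holds for an arbitrary permutation and only needs $0<\alpha_k\le\bar\alpha/n$. Summing \eqref{eq:prop descent 1} over $k=1,\ldots,T$ and using $f(x^{T+1})\ge f^*$ from \cref{assumption 1.5}, we get
\[
\sum_{k=1}^T \frac{n\alpha_k}{4\kappa_\delta}\,\cG(x^k)\le f(x^1)-f^* + \frac{3(\kappa_\delta\sL\sG)^2}{4\mu}\sum_{k=1}^T n^3\alpha_k^3 .
\]
This holds pathwise, so taking expectation changes nothing essential. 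By the sampling rule for $\tilde x^T$,
\[
\Exp[\cG(\tilde x^T)]=\Exp\Big[\sum_k \alpha_k\cG(x^k)\Big]\Big/\sum_k\alpha_k .
\]
The step sizes are deterministic, so the index sampling is independent of the trajectory. Multiplying the displayed inequality by $4\kappa_\delta/\sum_k n\alpha_k$ then gives \eqref{eq:thm complexity 1}, with second coefficient $3\kappa_\delta^3\sL^2\sG^2/\mu$.

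For the constant step, I first check admissibility. The condition $\alpha_k=1/(nT^{1/3})\le\bar\alpha/n$ means $T^{1/3}\ge\max\{2\sL\kappa_\delta,\,2\sG/\delta\}$, i.e. $T\ge\max\{8\sL^3\kappa_\delta^3,\,8\sG^3/\delta^3\}$, which is exactly the stated requirement. Next, $\sum_k n\alpha_k = T^{2/3}$ and $\sum_k n^3\alpha_k^3=T\cdot T^{-1}=1$. Hence
\[
\Exp[\cG(\tilde x^T)]\le \frac{4\kappa_\delta(f(x^1)-f^*)+3\kappa_\delta^3\sL^2\sG^2/\mu}{T^{2/3}}=\cO(T^{-2/3}).
\]
Setting this to be at most $\epsilon$ requires $T=\cO(\epsilon^{-1.5})$ epochs. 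Each epoch uses $n$ component gradients, so the sample complexity is $\cO(n\epsilon^{-1.5})$.

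There is no real obstacle, since the substantive work is already contained in \cref{lem:valid complexity} and \cref{lem:err bound}(a). The only points to be careful with are two. First, the hypotheses of \cref{prop:descent property}(a) must hold: the statement's $0\le\alpha_k$ should really be $0<\alpha_k$ so that $\tilde x^T$ and $\cG$ are well defined. Second, the weighted random selection must be correctly converted into the weighted average.
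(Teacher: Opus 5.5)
Your proposal is correct and follows the paper's proof essentially step for step. You telescope \eqref{eq:prop descent 1}, bound $f(x^{T+1})\ge f^*$, normalize by $\sum_k n\alpha_k$ to obtain the $\alpha_k$-weighted average, and then plug in $\alpha_k = 1/(nT^{1/3})$, with the admissibility check $\alpha_k\le\bar\alpha/n \iff T\ge\max\{8\sL^3\kappa_\delta^3,\, 8\sG^3/\delta^3\}$. Your side remark that one needs $\alpha_k>0$ rather than $\alpha_k\ge 0$, so that $\cG$ and the sampling distribution are well defined, is a fair minor correction to the statement.
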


\begin{proof}
Telescoping \eqref{eq:prop descent 1} from $k=1,\ldots,T$ yields
\[
 \frac{1}{4\kappa_\delta}{\sum}_{k=1}^{T}n\alpha_k\cdot \cG(x^k) \leq (f(x^1) - f(x^{T+1})) +  \frac{3(\kappa_\delta  \sL\sG)^2}{4\mu}  {\sum}_{k=1}^{T}n^3 \alpha_k^3.
\]
Since $f(x) \geq f^*$ for all $x\in\cZ$, after multiplying  $4\kappa_\delta/(\sum_{k=1}^{T}n\alpha_k)$ on both sides, we have
\begin{equation} 
\Exp[\cG(\tilde x^{T})] =\frac{{\sum}_{k=1}^{T}n\alpha_k\cdot \cG(x^k)}{{\sum}_{k=1}^{T}n\alpha_k} \leq \frac{4\kappa_\delta \cdot (f(x^1)-f^*)}{{\sum}_{k=1}^{T}n\alpha_k} + \frac{3\kappa_\delta^3  \sL^2\sG^2}{\mu} \cdot \frac{{\sum}_{k=1}^{T}n^3 \alpha_k^3}{{\sum}_{k=1}^{T}n\alpha_k}, \nonumber
\end{equation} 
which proves \eqref{eq:thm complexity 1}. Finally, setting $\alpha_k = \frac{1}{nT^{1/3}}$ in the above inequality yields 
\[
\Exp[\cG(\tilde x^{T})] \leq  \Big[4(f(x^1)-f^*) + \frac{3(\kappa_\delta \sL\sG)^2}{\mu} \Big]\cdot \frac{\kappa_\delta}{T^{2/3}} = \cO(T^{-2/3}),
\]
under the condition that $\alpha_k=\frac{1}{nT^{1/3}}\leq \frac{\bar\alpha}{n}$, or equivalently, $T\geq  \max\Big\{8\sL^3\kappa_\delta^3,\frac{8\sG^3}{\delta^3}\Big\}$. With $n$ samples consumed in each epoch, the sample complexity for finding an $\epsilon$ solution is $nT = \cO(n\epsilon^{-1.5})$.
\end{proof}
\vspace{2mm}

It can be observed that when the permutations $\{\pi^k\}$ are arbitrarily selected, including IMD, the algorithm can suffer an $\cO(n)$ linear dependence in the worst case. In the next theorem, we show how a uniform permutation can improve it to a much better $\cO(\sqrt{n})$ sublinear dependence.

\begin{theorem}\label{thm:complexity-RRMD}
Let \cref{assumption 1,assumption 1.5} hold and let $\{x^k\}$ be generated by \cref{alg:shuffle} with $0\leq \alpha_k \leq \min\left\{\frac{1}{7n\sL\kappa_\delta^2},\frac{\delta}{2n\sG}\right\}$. Let $\tilde x^T$ be randomly selected from $\{x^1,\ldots,x^T\}$ with $\Prob(\tilde x^T=x^k) = \frac{\alpha_k}{\sum_{i=1}^T\alpha_i}$. If in addition, the permutations $\{\pi^k\}$ are independently generated via uniform shuffling, then  
\begin{equation}
    \label{eq:thm complexity 2}
\Exp[\cG(\tilde x^{T})] \leq  \frac{8\kappa_\delta \cdot (f(x^1)-f^*)}{{\sum}_{k=1}^{T}n\alpha_k} + \frac{48\kappa_\delta^3  \sL^2\sG^2}{\mu}  \cdot \frac{{\sum}_{k=1}^{T}n^2 \alpha_k^3}{{\sum}_{k=1}^{T}n\alpha_k}.
\end{equation} 
If we choose constant step sizes $\alpha_k\equiv\frac{1}{n^{2/3} T^{1/3}}$ and require $T\geq  n\cdot\max\Big\{(7\sL\kappa_\delta^2)^3,\frac{8\sG^3}{\delta^3}\Big\}$, then we have $\Exp[\cG(\tilde x^{T})] \leq \cO(n^{-\frac{1}{3}}T^{-\frac{2}{3}})$, corresponding to an $\cO(n^{0.5}\epsilon^{-1.5})$ sample complexity to ensure $\mathbb{E}[\cG(\tilde{x}^T)]\leq \epsilon$.
\end{theorem}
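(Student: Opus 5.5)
The argument follows the proof of \cref{thm:complexity-IMD}, with the in-expectation descent \eqref{eq:prop descent 2} replacing the deterministic estimate \eqref{eq:prop descent 1}. First I check that the step-size restriction $\alpha_k\le\min\{\frac{1}{7n\sL\kappa_\delta^2},\frac{\delta}{2n\sG}\}$ meets the hypotheses of \cref{prop:descent property}(b). Since $\kappa_\delta\ge1$, we have $\frac{1}{7n\sL\kappa_\delta^2}\le \frac{1}{2n\sL\kappa_\delta}$, so $\alpha_k\le\bar\alpha/n$. Hence \cref{lem:region} applies, as does the extra requirement $\alpha_k\le 1/(7\kappa_\delta^2 n\sL)$. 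The permutations are generated independently by uniform shuffling, so $\pi^k$ is uniform conditionally on $\cF_k$, which is exactly what \cref{lem:err bound}(b) needs. Thus \eqref{eq:prop descent 2} holds for every $k$.

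Next I sum \eqref{eq:prop descent 2} over $k=1,\ldots,T$. The function values telescope, and $\Exp[f(x^{T+1})]\ge f^*$ by \cref{assumption 1.5}, which gives
\[
\frac{1}{8\kappa_\delta}\sum_{k=1}^T n\alpha_k\,\Exp[\cG(x^k)]\le f(x^1)-f^*+\frac{6(\kappa_\delta\sL\sG)^2}{\mu}\sum_{k=1}^T n^2\alpha_k^3 .
\]
The step sizes are deterministic and $\tilde x^T$ is drawn independently with probabilities proportional to $\alpha_k$. Therefore $\Exp[\cG(\tilde x^T)]=\sum_k n\alpha_k\Exp[\cG(x^k)]/\sum_k n\alpha_k$. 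Multiplying the display by $8\kappa_\delta/\sum_k n\alpha_k$ then yields \eqref{eq:thm complexity 2}, with constant $8\cdot 6=48$.

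For the rate, I substitute $\alpha_k\equiv n^{-2/3}T^{-1/3}$. Then $\sum_k n\alpha_k=n^{1/3}T^{2/3}$ and $\sum_k n^2\alpha_k^3=T\cdot n^2\cdot n^{-2}T^{-1}=1$. Both terms are therefore $\cO(n^{-1/3}T^{-2/3})$, namely
\[
\Exp[\cG(\tilde x^T)]\le\Big[8\kappa_\delta(f(x^1)-f^*)+\frac{48\kappa_\delta^3\sL^2\sG^2}{\mu}\Big]n^{-1/3}T^{-2/3}.
\]
The step-size admissibility condition reads $n^{-2/3}T^{-1/3}\le \frac{1}{7n\sL\kappa_\delta^2}$ and $n^{-2/3}T^{-1/3}\le\frac{\delta}{2n\sG}$. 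These are equivalent to $T\ge n(7\sL\kappa_\delta^2)^3$ and $T\ge n\,8\sG^3/\delta^3$, which is the stated requirement.

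Finally, setting $n^{-1/3}T^{-2/3}\asymp\epsilon$ gives $T\asymp n^{-1/2}\epsilon^{-3/2}$, so the sample count is $nT=\cO(n^{1/2}\epsilon^{-1.5})$. This regime is compatible with the lower bound $T\gtrsim n$ whenever $\epsilon$ is small enough. No step is a genuine obstacle, since the heavy lifting is in \cref{prop:descent property}(b). The only points needing care are the verification of $\alpha_k\le\bar\alpha/n$ from the new step-size bound and the conditional-uniformity of $\pi^k$ given $\cF_k$ that licenses the use of \cref{lem:err bound}(b).
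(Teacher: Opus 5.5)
Your proposal is correct and follows the paper's proof essentially verbatim: telescope \eqref{eq:prop descent 2}, divide by $\sum_k n\alpha_k$, then substitute the constant step size and convert the step-size cap into the lower bound on $T$. Your additional checks make explicit two hypotheses the paper uses implicitly: that the new cap implies $\alpha_k\le\bar\alpha/n$ because $\kappa_\delta\ge 1$, and that $\pi^k$ is uniform conditionally on $\cF_k$.
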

\begin{proof} Similar to the proof of \cref{eq:thm complexity 2}, telescoping \eqref{eq:prop descent 2} and then dividing ${\sum}_{k=1}^{T} n\alpha_k$ on both sides of the inequality yields
\[
\Exp[\cG(\tilde x^{T})] = \frac{{\sum}_{k=1}^{T}n\alpha_k\cdot \Exp[\cG(x^k)]}{{\sum}_{k=1}^{T}n\alpha_k} \leq \frac{8\kappa_\delta \cdot (f(x^1)-f^*)}{{\sum}_{k=1}^{T}n\alpha_k} + \frac{48\kappa_\delta^3  \sL^2\sG^2}{\mu}  \cdot \frac{{\sum}_{k=1}^{T}n^2 \alpha_k^3}{{\sum}_{k=1}^{T}n\alpha_k}.
\]
Then, setting $\alpha_k = \frac{1}{n^{2/3} T^{1/3}}$ in the above inequality gives 
\[
\Exp[\cG(\tilde x^{T})]  \leq  \Big[8(f(x^1)-f^*) + \frac{48(\kappa_\delta \sL\sG)^2}{\mu} \Big]\cdot \frac{\kappa_\delta}{n^{1/3}T^{2/3}} = \cO(n^{-\frac{1}{3}}T^{-\frac{2}{3}}), 
\]
as long as we require $\frac{1}{n^{2/3} T^{1/3}}\leq \min\big\{\frac{1}{7n\sL\kappa_\delta^2},\frac{\delta}{2n\sG}\big\}$, or equivalently, $T\geq  n\cdot\max\Big\{(7\sL\kappa_\delta^2)^3,\frac{8\sG^3}{\delta^3}\Big\}$. The sample complexity for finding an $\epsilon$ solution is $nT = \cO(n^{0.5}\epsilon^{-1.5})$.
\end{proof}
\vspace{2mm}

Recall that SMD requires $\cO(\varepsilon^{-2})$ samples to achieve $\Exp[\cG(\tilde x^{T})]\leq \varepsilon$, while RRMD has the bound $\cO(n^{0.5} \varepsilon^{-1.5})$. As a remark, whenever the accuracy $\varepsilon \leq \cO(n^{-1})$, RRMD can outperform SMD. We should also notice that the complexities in \Cref{thm:complexity-IMD,thm:complexity-RRMD} depend on the condition number $\kappa_\delta$ of the kernel $h$, and implicitly depend on parameter dual space radius parameter $\delta$ (which affects the range of step size $\alpha_k$). Our remark is that the condition number $\kappa_\delta$ is usually moderate for constant level $\delta = \cO(1)$. For instance, when the kernel $h$ is the Boltzmann-Shannon or Fermi-Dirac entropy, $\kappa_\delta$ is bounded above by $\exp(\delta)$, see Appendix \ref{app:kernel}. Then setting $\delta = 1$ yields $\kappa_\delta \leq \exp(1)\approx 2.7$. For power kernels, the dependence of $\kappa_\delta$ on $\delta$ is polynomial, which is even milder.

\section{Last-iterate convergence}\label{sec:last-iterate}
In this section, we establish the last-iterate convergence of RRMD, which further extend the complexity results and provides asymptotic guarantee for our method. In particular, we will work with the additional assumptions:

\begin{assumption}\label[assumption]{assumption: tame}
    Function $f:\Rd\to\R\cup\{+\infty\}$ is definable in o-minimal structure. 
\end{assumption}

Definable functions arise in many areas of optimization and geometry. For instance, any real semialgebraic or globally sub-analytic function (e.g. polynomials) is definable in the o-minimal structure \cite{kur98,loj63,gabrielov1996complements}. Functions in log-exp structures \cite{van1998tame} are also definable. Recent work \cite{davis2020stochastic} shows that loss landscapes of common neural network architectures are definable. 
For additional examples and closure properties, please see \cite{AttBolRedSou10,bolte2014proximal,davis2020stochastic,li2018calculus}.

\begin{assumption}\label[assumption]{assumption: bounded}
    Iterates $\{x^k\}$ generated by \cref{alg:shuffle} are bounded in the dual space, i.e., the sequence $\{\nabla h(x^k)\}$ is bounded.
\end{assumption}

For standard smooth nonconvex optimization, the primal boundedness assumption (the boundedness of primal iterates $\{x^k\}$) is commonly adopted in the analysis of last-iterate convergence \cite{AttBolRedSou10,bolte2014proximal,bolte2018first,li2023convergence}.   For mirror descent analysis under the relative smooth setting, Bolte el al. \cite{bolte2018first} inherit this assumption, and, for technical reason, they also require the domain of the kernel  $\dom{h} = \mathbb{R}^d$ to cover the whole space (\cite[Assumption D]{bolte2018first}). Such a requirement essentially excludes many most widely used kernels such as Boltzmann-Shannon entropy and Burg's entropy whose $\dom{h} \neq \mathbb{R}^d$. To fix this issue, we modify the boundedness assumption to dual space iterates $\{\nabla h(x^k)\}$, extending the convergence theory to a much wider class of kernels. Nevertheless, for power/polynomial kernel or exponential kernel with $\dom{h} = \mathbb{R}^d$, the image $\nabla h(\Rd) = \Rd$ and \cref{assumption: bounded} reduces to the existing primal boundedness requirement (\cite[Assumption D]{bolte2018first}). 

\begin{theorem}
    \label{thm:last-iterate convergence}
    Suppose \cref{assumption 1,assumption: tame,assumption: bounded} hold. Let $\{x^k\}$ be generated by \cref{alg:shuffle} with $\{\alpha_k\}$ satisfying ${\sum}_{k=1}^\infty \alpha_k=\infty$ and  ${\sum}_{k=1}^\infty \alpha_k^3<\infty.$ Then, the following assertions are true:
    \begin{enumerate}[label=\textup{\textrm{(\alph*)}},leftmargin=1.8em,topsep=2pt,itemsep=.5ex,partopsep=0ex]
    \item Every accumulation point of $\{x^k\}$ is stationary, i.e., $\lim_{k\to\infty}\|\nabla f(x^k)\|=0$.
    \item Moreover, if ${\sum}_{k=1}^\infty\; \alpha_k ({\sum}_{i=k}^\infty \alpha_i^3)^{\theta} <\infty$ for some $\theta\in(0,1)$, then $\sum_{k=1}^\infty \rho_h(x^k,x^{k+1})<\infty$, and $x^k\to \bar x\in \crit(f):=\{x\in\mathcal{Z}:\nabla f(x)=0\}$.
    \end{enumerate}
\end{theorem}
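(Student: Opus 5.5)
Under \cref{assumption: bounded} the dual iterates $\{\nabla h(x^k)\}$ lie in a bounded set. Since all gradients are controlled by boundedness together with continuity, I will either keep \cref{assumption 1.5} in force or replace $\sG$ by a local bound on this compact dual region. Because $\sum_k\alpha_k^3<\infty$, we have $\alpha_k\to0$, so after finitely many epochs $\alpha_k\le\bar\alpha/n$. From then on \cref{lem:region} and \cref{prop:descent property}(a) apply, and \eqref{eq:prop descent 1} gives the deterministic approximate descent
\[
f(x^{k+1})+\tfrac{n\alpha_k}{4\kappa_\delta}\cG(x^k)\le f(x^k)+c\,\alpha_k^3 .
\]
Since $\sum\alpha_k^3<\infty$, the sequence $u_k:=f(x^k)+c\sum_{i\ge k}\alpha_i^3$ is nonincreasing and bounded below by $f^*$. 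Hence $f(x^k)$ converges, $\sum_k\alpha_k\cG(x^k)<\infty$, and by \cref{lem:valid complexity} also $\sum_k\alpha_k\|\nabla f(x^k)\|^2<\infty$ (here $\mu(\cX^k)$ is bounded above on the compact dual region). Since $\sum\alpha_k=\infty$, this yields $\liminf\|\nabla f(x^k)\|=0$.

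For (a) I would upgrade $\liminf$ to $\lim$ by the standard upcrossing argument. Suppose $\|\nabla f(x^k)\|$ crosses from below $\varepsilon$ to above $2\varepsilon$ infinitely often. The dual step satisfies $\rho_h(x^k,x^{k+1})\le n\alpha_k\sG$ by \eqref{eqn:rrmd-update}. The dual Lipschitz bound \eqref{eqn:Loc_Unif_Lip} (with $\kappa_\delta$, since consecutive dual points are $\delta$-close) then gives
\[
\varepsilon\le \sL\sqrt{\kappa_\delta}\sum_{k\in\text{crossing}}\rho_h(x^k,x^{k+1})\le C\sum_{k\in\text{crossing}}\alpha_k .
\]
But on a crossing $\|\nabla f\|\ge\varepsilon$, so $\sum_{\text{crossing}}\alpha_k\le \varepsilon^{-2}\sum_{\text{crossing}}\alpha_k\|\nabla f(x^k)\|^2$, which tends to $0$ along the tail. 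This is a contradiction. For accumulation points, I would note that boundedness of $\nabla h(x^k)$ plus essential smoothness keeps limits in $\idom{h}$, so continuity of $\nabla f$ gives stationarity.

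For (b) I would run a KL argument on the Lyapunov function $u_k$ in the style of \cite{li2023convergence}. Let $\Omega$ be the set of accumulation points: it is compact, lies in $\idom{h}$ by dual boundedness, consists of critical points, and $f\equiv f^\star$ on it. Definability gives a uniform Łojasiewicz inequality near $\Omega$:
\[
|f(x)-f^\star|^{\theta}\le C\|\nabla f(x)\| .
\]
The aim is to show $\sum\alpha_k\|\nabla f(x^k)\|<\infty$. Since $\rho_h(x^k,x^{k+1})\le n\alpha_k\|\nabla f(x^k)\|+C\alpha_k^2$ (the error from dual Lipschitz continuity within an epoch) and $\sum\alpha_k^2$ needs care, I would instead bound it by $n\alpha_k\|\nabla f(x^k)\|+C\alpha_k\cdot n\alpha_k$. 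I would then reorganize the estimate via the descent inequality.

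The key step uses the desingularizing function $\varphi(s)=s^{1-\theta}$ applied to $u_k-f^\star$. Concavity gives
\[
\varphi(u_k-f^\star)-\varphi(u_{k+1}-f^\star)\gtrsim \frac{\alpha_k\|\nabla f(x^k)\|^2}{(u_k-f^\star)^\theta}.
\]
Next split $(u_k-f^\star)^\theta\le|f(x^k)-f^\star|^\theta+(c\sum_{i\ge k}\alpha_i^3)^\theta\le C\|\nabla f(x^k)\|+(c\sum_{i\ge k}\alpha_i^3)^\theta$. This is exactly where the hypothesis $\sum_k\alpha_k(\sum_{i\ge k}\alpha_i^3)^\theta<\infty$ enters. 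In the two cases, either $\|\nabla f(x^k)\|$ dominates, and then $\alpha_k\|\nabla f(x^k)\|\lesssim$ the telescoping difference in $\varphi$; or the tail term dominates, and then $\alpha_k\|\nabla f(x^k)\|\lesssim\alpha_k(\sum_{i\ge k}\alpha_i^3)^\theta$, which is summable. Summing gives $\sum\alpha_k\|\nabla f(x^k)\|<\infty$, hence $\sum\rho_h(x^k,x^{k+1})<\infty$ once the extra $\alpha_k^2$ term is also handled.

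The main obstacle is this last term. It is bounded because the per-epoch drift $\rho_h$ comes only from $n\alpha_k\|\nabla f(x^k)\|$ plus a within-epoch error of order $\alpha_k\sum_i\rho_h(y^{k,i},x^k)$. Each $\rho_h(y^{k,i},x^k)$ is itself $\lesssim\alpha_k(\|\nabla f(x^k)\|+\text{error})$, which yields a term of order $\alpha_k^2\|\nabla f\|$ (fine) rather than $\alpha_k^2\sG$. This requires a careful recursive bound within the epoch using the dual Lipschitz continuity. The argument also has to be localized so that the iterates stay in the KL neighborhood, via the usual first-entry argument.

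Finally, $\nabla h(x^k)$ is then Cauchy and converges to some $w$ in the closure of a bounded set. I would argue $w\in\mathrm{Im}(\nabla h)$: this holds because accumulation points exist in $\idom{h}$ by the dual boundedness/closedness of $\{\nabla h(x^k)\}$ limits through $\nabla h^*$. Then $x^k=\nabla h^*(\nabla h(x^k))\to\bar x$ by continuity of $\nabla h^*$, and $\bar x\in\crit(f)$ by (a).
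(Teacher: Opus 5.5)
Your part (a) is fine. The upcrossing argument via $\rho_h(x^k,x^{k+1})\le n\alpha_k\sG$ and $\sum_k\alpha_k\|\nabla f(x^k)\|^2<\infty$ is a valid variant of the paper's, which instead applies Cauchy--Schwarz to $\sum_k\rho_h^2(x^k,x^{k+1})/(n\alpha_k)<\infty$. The genuine gap is in (b). You assert that definability gives $|f(x)-f^\star|^{\theta}\le C\|\nabla f(x)\|$ near the limit set, with $\theta$ the exponent from the step-size hypothesis. For a function definable in a general o-minimal structure (the assumption explicitly admits log-exp structures) no power-type {\L}ojasiewicz inequality is available. In $\mathbb{R}_{\exp}$, the flat function $f(x)=e^{-1/x^2}$ has bounded $f'$ and $f''$, so it fits the standing assumptions, yet it violates $|f|^{\theta}\le C|f'|$ near $0$ for every $\theta<1$; only a KL inequality with a non-power desingularizer $\psi$ holds there. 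Even for semialgebraic $f$ the exponent is dictated by $f$, not by the step sizes, though that part could be patched because the step-size condition survives increasing $\theta$. Your splitting $(u_k-f^\star)^{\theta}\le|f(x^k)-f^\star|^{\theta}+(c\sum_{i\ge k}\alpha_i^3)^{\theta}$ is exactly where the power form is used. The missing ingredient is \Cref{lem:KL}: for \emph{every} $\theta\in(0,1)$ the desingularizer can be chosen so that $1/\psi'(s+t)\le 1/\psi'(s)+\varsigma t^{\theta}$. This is the non-power substitute for subadditivity of $s\mapsto s^{\theta}$, and it is what makes the perturbed Lyapunov sequence $\Upsilon_k=f(x^k)+\sC_3\sum_{i\ge k}n^3\alpha_i^3$ work for arbitrary definable $f$.

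Your handling of the displacement $\rho_h(x^k,x^{k+1})$ also contains a false step. The claim that $\rho_h(y^{k,i},x^k)\lesssim\alpha_k(\|\nabla f(x^k)\|+\text{error})$ fails. Indeed $\rho_h(y^{k,i},x^k)=\alpha_k\|\sum_{j<i}\nabla f_{\pi^k_j}(y^{k,j})\|$ involves partial sums of \emph{component} gradients, which are of order $\alpha_k i\sG$ even when $\nabla f(x^k)=0$ (take $f_2=-f_1$). So the within-epoch error is genuinely $O(\alpha_k^2)$. That term happens to be harmless, because the hypothesis of (b) already forces $\sum_k\alpha_k^2<\infty$. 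With $T_k:=\sum_{i\ge k}\alpha_i^3$, write $\alpha_k^2=(\alpha_kT_k^{\theta})^{1/2}(\alpha_k^3T_k^{-\theta})^{1/2}$, apply Cauchy--Schwarz, and use $\sum_k\alpha_k^3T_k^{-\theta}\le\int_0^{T_1}t^{-\theta}\,\mathrm{d}t<\infty$. The paper avoids the issue entirely. It keeps the term $\cD(x^k,x^{k+1})/(2n\alpha_k)\ge\rho_h^2(x^k,x^{k+1})/(4\bar\cL n\alpha_k)$ from \eqref{eq:prop descent 0} in the Lyapunov decrease. It merges this with $n\alpha_k\|\nabla f(x^k)\|^2$ via $2a^2+2b^2\ge(a+b)^2$, then uses $(a+b)^2/(a+c)\ge a+b-c$. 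This places $\rho_h(x^k,x^{k+1})$ directly in the telescoped KL estimate, without ever relating the epoch displacement to $\nabla f(x^k)$.
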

\begin{remark}
    The step-size conditions in \Cref{thm:last-iterate convergence} are satisfied by the step-size $\alpha_k= 1/k^\gamma$ with $\gamma\in(\frac12,1]$. It is straightforward to verify $\sum_{k=1}^\infty \alpha_k = \infty$ and $\sum_{k=1}^\infty \alpha_k^3 < \infty$. To confirm that $\{\alpha_k\}_k$ also satisfies $\sum_{k=1}^\infty \alpha_k (\sum_{i=k}^\infty \alpha_i^3)^{\theta} < \infty$, we apply the integral test:
    \begin{align*}
    {\sum}_{k=1}^\infty k^{-\gamma} \big({\sum}_{i=k}^\infty i^{-3\gamma}\big)^{\theta}  \leq  {\sum}_{k=1}^\infty k^{-\gamma} \Big(k^{-3\gamma} + \frac{1}{3\gamma-1}\cdot k^{1-3\gamma}\Big)^{\theta}  = \cO\Big({\sum}_{k=1}^\infty k^{-\gamma+(1-3\gamma)\theta} \Big) .  
    \end{align*}
    Therefore, the summation ${\sum}_{k=1}^\infty k^{-\gamma+(1-3\gamma)\theta}$ is finite whenever $\theta\in(\frac{1-\gamma}{3\gamma-1},1)$.\vspace{0.15cm}
\end{remark}

Next, let us present the last iterate convergence analysis of \cref{thm:last-iterate convergence}, which is organized as two subsections due to the complexity of the proof. 

\subsection{Proof of \Cref{thm:last-iterate convergence} (a)}
    Since $\{\nabla h(x^k)\}$ is bounded, there exist $\cX\subseteq\mathcal{Z}$ and $\sD>0$ such that $\diam_h(\cX) \leq \sD$ and $\{x^k\}\subseteq \cX$. Whence, by DKC regularity, there is a constant $\kappa_\sD\geq 1$ such that $\kappa(\cX) \leq \kappa_\sD$. We also denote $\bar \cL:=\cL(\cX)$.
   Moreover, it follows from \Cref{lem:Lipschitz continuity} that 
\begin{equation}\label{eq:bounded Lipschitz}
    \|\nabla f(x) - \nabla f(y)\| \leq \sL\sqrt{\kappa_\sD} \cdot \rho_h(x,y)\quad \text{for all}\quad x,y\in\cX.
\end{equation}
Since $\alpha_k \to 0$, there is $\bar k\in\N$ such that $\alpha_k \leq \bar \alpha/n$ for all $k\geq \bar k$. Without loss of generality, we may discard all iterates up to the $\bar{k}$-th and relabel the sequence so that $\alpha_k \leq \bar{\alpha}/n$ hold for the new sequence. Thus, the conditions of \Cref{prop:descent property} are satisfied. Summing \eqref{eq:prop descent 0} and \eqref{eq:prop descent 1} yields
\begin{equation}\label{eq:asymptotic-1}
    2f(x^{k+1}) + \frac{\cD(x^k,x^{k+1})}{2n\alpha_k} + \frac{n\alpha_k}{4\kappa_\delta}\cdot \cG(x^k) \leq  2f(x^k) + \frac{5(\kappa_\delta  \sL\sG)^2}{4\mu} \cdot n^3 \alpha_k^3.
\end{equation}
By \cref{lem:important bounds}, we have $\cD(x^k,x^{k+1}) \geq \rho_h^2(x^k,x^{k+1})/(2\bar \cL)$ and \[\cG(x^k)=\frac{\cD(x^k,\hat x^k)}{(n\alpha_k)^2}  \geq \frac{\rho_h^2(x^k,\hat x^k)}{2\bar \cL(n\alpha_k)^2} = \frac{\|\nabla f(x^k)\|^2}{2\bar \cL}, \]
where the last equality is due to \eqref{eq:update proxy}. With the above estimates, we rewrite \eqref{eq:asymptotic-1} as
\begin{equation}\label{eq:asymptotic-1.5}
    f(x^{k+1}) + \sC_1\cdot \frac{\rho_h^2(x^k,x^{k+1})}{n\alpha_k} + \sC_2 \cdot  n\alpha_k\|\nabla f(x^k)\|^2 \leq  f(x^k) + \sC_3 \cdot  n^3 \alpha_k^3,
\end{equation}
where we denote $\sC_1 :=1/(8\bar \cL)$, $\sC_2:=1/(16\bar \cL\kappa_\delta)$ and $\sC_3:=5(\kappa_\delta  \sL\sG)^2/(8\mu)$ 
for notational simplicity. Telescoping \eqref{eq:asymptotic-1.5} from $k=1$ to $\infty$ and noting that $f(x^k)>-\infty$ and $\sum_{k=1}^\infty n^3 \alpha_k^3 <\infty$, we conclude
\begin{equation}\label{eq:asymptotic-2}
   {\sum}_{k=1}^\infty \frac{\rho_h(x^k,x^{k+1})}{n\alpha_k} <\infty \quad \text{and} \quad {\sum}_{k=1}^\infty n\alpha_k\|\nabla f(x^k)\|^2 <\infty.
\end{equation}
Notice that \eqref{eq:asymptotic-2} and $\sum_{k=1}^\infty \alpha_k = \infty$ imply $\liminf_{k\to\infty} \|\nabla f(x^k)\| = 0$. To show $\lim_{k\to\infty} \|\nabla f(x^k)\| = 0$, we prove by contradiction. Specifically, we assume on the contrary that $\limsup_{k\to\infty}\|\nabla f(x^k)\| > 0$. Hence, there exist $\varepsilon>0$ and infinite subsequences $\{t_k\}_k,\{\ell_k\}_k\subseteq \N$ such that $t_k < \ell_k <t_{k+1}$ and 
\begin{equation}\label{eq:asymptotic-3}
\|\nabla f(x^{t_k})\| \geq 2\varepsilon,\quad \|\nabla f(x^{\ell_k})\|<\varepsilon\quad \text{and}\quad  \|\nabla f(x^i)\|\geq \varepsilon\quad \forall\, i\in[t_k,\ell_k)\cap\N.
\end{equation}
Denote $\beta_k:={\sum}_{i=t_k}^{\ell_k-1} n \alpha_i$. It follows from \eqref{eq:asymptotic-2} and \eqref{eq:asymptotic-3} that
\[ \infty > {\sum}_{k=1}^\infty n\alpha_k\|\nabla f(x^k)\|^2 \geq {\sum}_{k=1}^\infty {\sum}_{i=t_k}^{\ell_k-1} n\alpha_i\|\nabla f(x^i)\|^2 \geq \varepsilon^2 \,{\sum}_{k=1}^\infty \beta_k \quad 
\Longrightarrow \quad \lim_{k\to\infty} \beta_k=0.\]
By triangle inequality of $\rho_h(\cdot,\cdot)$ and Cauchy-Schwarz inequality, we have
\begin{align*}
\rho_h(x^{t_k},x^{\ell_k}) &\leq {\sum}_{i=t_k}^{\ell_k-1} \rho_h(x^{i},x^{i+1}) = {\sum}_{i=t_k}^{\ell_k-1}\sqrt{n\alpha_i} \cdot \frac{\rho_h(x^{i},x^{i+1})}{\sqrt{n\alpha_i}} \\[2mm]&\leq \Big[{\sum}_{i=t_k}^{\ell_k-1}n\alpha_i \Big]^{1/2} \cdot \bigg[ {\sum}_{i=t_k}^{\ell_k-1}\frac{\rho_h^2(x^{i},x^{i+1})}{n\alpha_i} \bigg]^{1/2} \leq \sqrt{\beta_k}\cdot \bigg[ {\sum}_{i=1}^{\infty}\frac{\rho_h^2(x^{i},x^{i+1})}{n\alpha_i}\bigg]^{1/2}.
\end{align*}
Since $\beta_k \to 0$ and ${\sum}_{i=1}^{\infty}\rho_h^2(x^{i},x^{i+1})/(n\alpha_i)<\infty$ (by \eqref{eq:asymptotic-2}), we obtain $\rho_h(x^{t_k},x^{\ell_k})\to0$. In view of this, and upon invoking \eqref{eq:bounded Lipschitz} and \eqref{eq:asymptotic-3}, a contradiction arises:
\[
\varepsilon\leq \|\nabla f(x^{t_k})\| - \|\nabla f(x^{\ell_k})\| \leq \|\nabla f(x^{t_k}) -\nabla f(x^{\ell_k})\| \leq \sL\sqrt{\kappa_\sD} \cdot \rho_h(x^{t_k},x^{\ell_k}) \to 0.
\]
Thus, $\lim_{k\to\infty} \|\nabla f(x^k)\| = 0$, i.e., every accumulation point of $\{x^k\}$ is stationary. \hfill $\Box$

\subsection{Proof of \Cref{thm:last-iterate convergence} (b)}
To proceed, we denote $\cA$ the set of accumulation points. By \Cref{thm:last-iterate convergence} (a), $\cA \subseteq \crit(f)$. Next, we introduce the Kurdyka-{\L}ojasiewicz (KL) property for definable functions. This is simplified version of \cite[Lemma 4.11]{josz2024proximal}, see also, \cite[Lemma 4.1]{qiu2025normal}.
\begin{lemma}[The KL inequality]
\label[lemma]{lem:KL}
    Let \Cref{assumption: tame} hold and let $\bar{x}\in \crit(f)$ be given. For all $\theta\in(0,1)$, there then are $\varepsilon,\varsigma > 0, \eta\in(0,1]$ and a continuous, concave function $\psi:[0,\eta) \to \R_+$, which is $C^1$ on $(0,\eta)$, satisfying 
    \begin{equation}
         \label{eq:desingularization function}
         \psi(0)=0\quad \text{and} \quad 
    1/\psi^\prime(s+t) \leq 1/\psi^\prime(s) + \varsigma t^{\theta} \quad \text{for all $s,t >0$ with $s+t<\eta$}
    \end{equation}
     such that, for all $x\in\cB_\varepsilon(\bar x) \cap\{x:0<|f(x)-f(\bar x)|<\eta\}$, the KL inequality holds:
     \begin{equation}
         \label{eq:mer-kl}
         \psi^\prime(|f(x)-f(\bar x)|) \cdot  \|\nabla f(x)\|\geq 1.
     \end{equation} 
\end{lemma}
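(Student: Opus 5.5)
The plan is to start from the classical Kurdyka--{\L}ojasiewicz inequality for definable $C^1$ functions and then regularize the desingularizing function so that it also satisfies \eqref{eq:desingularization function}. Since $f$ is definable (\cref{assumption: tame}) and $\bar x\in\crit(f)$, Kurdyka's theorem (applied to $f$ and to $-f$, then taking the smaller neighborhood and the smaller level range) supplies $\varepsilon,\eta>0$ and a definable, continuous, concave $\varphi:[0,\eta)\to\R_+$ with $\varphi(0)=0$, $\varphi\in C^1(0,\eta)$ and $\varphi'>0$. It satisfies $\varphi'(|f(x)-f(\bar x)|)\,\|\nabla f(x)\|\ge 1$ for all $x\in\cB_\varepsilon(\bar x)$ with $0<|f(x)-f(\bar x)|<\eta$. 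Shrinking $\eta$ if needed, we may take $\eta\le 1$.

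Set $g:=1/\varphi'$ on $(0,\eta)$. Because $\varphi$ is concave, $\varphi'$ is nonincreasing, so $g$ is positive, continuous and nondecreasing. The KL inequality reads $g(|f(x)-f(\bar x)|)\le\|\nabla f(x)\|$. Replacing $g$ by any smaller positive function therefore preserves the inequality. This is the key freedom. Fix $\theta\in(0,1)$ and $\varsigma>0$, and define the infimal convolution
\[
\tilde g(s):=\inf_{0<r\le s}\big\{g(r)+\varsigma(s-r)^{\theta}\big\},\qquad s\in(0,\eta).
\]
Then:
\begin{itemize}
\item $\tilde g\le g$, by taking $r=s$.
\item $\tilde g(s+t)\le\tilde g(s)+\varsigma t^{\theta}$. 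Indeed, restrict the infimum to $r\le s$ and use $(s+t-r)^\theta\le (s-r)^\theta+t^\theta$, which holds because $\theta\in(0,1)$.
\item $\tilde g$ is nondecreasing. Let $s<s'$. Candidates $r\le s$ give terms at $s'$ no smaller than the corresponding terms at $s$. Candidates $r\in(s,s']$ give values at least $g(r)\ge g(s)\ge\tilde g(s)$.
\item $\tilde g$ is positive. Splitting according to $r\le s/2$ or $r>s/2$ yields $\tilde g(s)\ge\min\{g(s/2),\varsigma(s/2)^\theta\}>0$.
\end{itemize}
Finally, define $\psi(s):=\int_0^s \mathrm{d}u/\tilde g(u)$ on $[0,\eta)$.

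The remaining checks are routine. From the lower bound on $\tilde g$,
\[
1/\tilde g(u)\le \varphi'(u/2)+\varsigma^{-1}(u/2)^{-\theta}.
\]
Both terms are integrable near $0$: the first integrates to $2\varphi(s/2)$, the second because $\theta<1$. Hence $\psi$ is finite and continuous with $\psi(0)=0$. It is concave since $\psi'=1/\tilde g$ is nonincreasing. The inequality $\tilde g(s+t)\le\tilde g(s)+\varsigma t^\theta$ is exactly $1/\psi'(s+t)\le 1/\psi'(s)+\varsigma t^\theta$, i.e.\ \eqref{eq:desingularization function}. The KL inequality \eqref{eq:mer-kl} follows from $\tilde g\le g$ and the KL inequality for $\varphi$.

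The step I expect to be the main obstacle is showing that $\psi\in C^1(0,\eta)$, which requires $\tilde g$ to be continuous. I would argue as follows.
\begin{itemize}
\item Upper semicontinuity is automatic, since $\tilde g$ is an infimum of functions continuous in $s$.
\item For lower semicontinuity, the infimum on a compact subinterval $[a,b]\subset(0,\eta)$ can be restricted to $r\in[c,s]$ with $c>0$ small, because $g(r)\ge 0$ while $\varsigma(s-r)^\theta$ is then close to its maximum. On this compact set, continuity of $g$ and uniform continuity of $(s,r)\mapsto\varsigma(s-r)_+^\theta$ give continuity of $\tilde g$.
\item Alternatively, monotonicity of $\tilde g$ together with $\tilde g(s+t)-\tilde g(s)\le\varsigma t^\theta$ already gives right-continuity directly. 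The same estimate read from the left, $\tilde g(s)\le\tilde g(s-t)+\varsigma t^\theta$, gives left-continuity, hence full continuity.
\end{itemize}
This last argument is cleaner, and I would use it.
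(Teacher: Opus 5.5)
Your proof is correct, and it takes a different route from the paper. The paper does not prove this lemma; it imports it as a simplified version of \cite[Lemma 4.11]{josz2024proximal} (see also \cite[Lemma 4.1]{qiu2025normal}). You use definability only once, to obtain a concave desingularizer $\varphi$. You then produce \eqref{eq:desingularization function} by inf-convolving $g=1/\varphi'$ with $\varsigma t^\theta$. All four properties you list for $\tilde g$ check out: domination by $g$, the increment bound via subadditivity of $t\mapsto t^\theta$, monotonicity, and positivity. The hypothesis $\theta<1$ is used exactly in the integrability of $1/\tilde g$ at $0$. The estimate $0\le\tilde g(s)-\tilde g(s')\le\varsigma(s-s')^\theta$ for $s'<s$ gives continuity, hence $\psi\in C^1(0,\eta)$. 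What your route buys over the citation is a self-contained, reusable mechanism. Any KL inequality with a concave desingularizer can be upgraded to satisfy \eqref{eq:desingularization function}, and this includes a nonsmooth one with $\dist(0,\partial f(x))$ in place of $\|\nabla f(x)\|$. It works for every $\varsigma>0$, and the normalization $\eta\le 1$ plays no role. The paper's route costs one line but leaves this mechanism implicit.

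There are three small repairs.
\begin{enumerate}
\item Applying Kurdyka's theorem to $f$ and to $-f$ yields two desingularizers $\varphi_1,\varphi_2$, one for each side of the level $f(\bar x)$. Shrinking the neighborhood and level range does not merge them. Taking $\varphi:=\varphi_1+\varphi_2$ on the common interval does: it is concave, $C^1$, vanishes at $0$, and has $\varphi'\ge\varphi_i'$, so both one-sided inequalities survive.
\item Concavity is not part of Kurdyka's original statement, but for definable $\varphi$ it follows from the monotonicity theorem applied to $\varphi'$. If $\varphi'$ is increasing near $0$, it is bounded there by some $C$, and $s\mapsto Cs$ serves as a concave desingularizer.
\item Discard the second bullet of your continuity discussion. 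The infimum defining $\tilde g(s)$ may be approached only as $r\to0^+$. For instance, with $g(r)=2r^{1/2}$, $\varsigma=1$, $\theta=0.9$, one gets $\tilde g(s)=s^{0.9}$ for $s<1$, and this value is not attained for any $r>0$. So the infimum cannot be restricted to $r\ge c>0$. Your third bullet already gives continuity, so nothing is lost.
\end{enumerate}
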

\begin{proof}{Proof of \Cref{thm:last-iterate convergence} (b).}
First, we introduce an auxiliary sequence \[\Upsilon_k:=f(x^k) + \sC_3 {\sum}_{i=k}^\infty n^3 \alpha_i^3,\] which is well defined as $\sum_{i=1}^\infty n^3 \alpha_i^3<\infty$. Then, \eqref{eq:asymptotic-1.5} is rewritten as
\begin{equation}\label{eq:asymptotic-4}
    \Upsilon_{k+1} + \sC_1\cdot \frac{\rho_h^2(x^k,x^{k+1})}{n\alpha_k} + \sC_2 \cdot  n\alpha_k\|\nabla f(x^k)\|^2 \leq  \Upsilon_k.
\end{equation}
Clearly, the sequence $\{\Upsilon_k\}_k$ is non-increasing. Since it is bounded below, there exists a constant $\bar{f}\in\R$ such that $\Upsilon_k \to \bar{f}$ as $k \to \infty$. Moreover, $\lim_{k\to\infty}\sum_{i=k}^\infty n^3 \alpha_i^3=0$ implies that $\lim_{k\to\infty} f(x^k)= \lim_{k\to\infty} \Upsilon_k = \bar{f}$. Note also that $f$ is continuous, we have $\bar{f}=f(x)$ for all $x\in\cA$. 

By the uniformized KL property \cite[Lemma 6]{bolte2014proximal}, \Cref{lem:KL} and $\cA\subseteq\crit(f)$, there are $\varepsilon,\eta>0$ such that the KL inequality \eqref{eq:mer-kl}---with the desingularization function $\psi:[0,\eta)\to\R_+$ specified in \Cref{lem:KL}---holds for all $x\in\mathcal{Z}$ satisfying
\begin{equation}
    \label{eq:KL region}
    \dist(x,\cA)<\varepsilon \quad \text{and} \quad |f(x)-\bar f|<\eta.  
\end{equation}
Since $\cA$ is accumulation points set, $\{\Upsilon_k\}_k$ is non-increasing, and $\lim_{k\to\infty} f(x^k)= \lim_{k\to\infty} \Upsilon_k = \bar{f}$, there exists $\tilde k\in\N$ such that $x^k$ satisfies \eqref{eq:KL region} and $\Upsilon_k-\bar f\in[0,\eta)$ for all $k\geq \tilde k$. Without loss of generality, we may assume $\Upsilon_k\neq\bar f$ (otherwise, $x^k$ is a stationary point by \eqref{eq:asymptotic-4}) and work with $k\geq \tilde k$ in the subsequent analysis. 

By concavity of $\psi$, we have 
\begin{equation}
    \label{eq:asymptotic-5}
    \psi(\Upsilon_{k}-\bar f)-\psi(\Upsilon_{k+1}-\bar f)\geq \psi^\prime(\Upsilon_{k}-\bar f) \cdot (\Upsilon_{k} - \Upsilon_{k+1}).
\end{equation}
It follows from \eqref{eq:asymptotic-4} and $2a^2+2b^2 \geq (a+b)^2$ that
\begin{equation}
    \label{eq:asymptotic-6}
    \begin{aligned}
           \Upsilon_{k} - \Upsilon_{k+1} &\geq \sC_1\cdot \rho_h^2(x^k,x^{k+1})/(n\alpha_k) + \sC_2 \cdot  n\alpha_k\|\nabla f(x^k)\|^2\\[2mm]
           &=\frac{\sC_2}{2n\alpha_k} \cdot \Big[2(n\alpha_k\|\nabla f(x^k)\|)^2 + 2\big(\sqrt{\sC_1/\sC_2}\cdot \rho_h(x^k,x^{k+1})\big)^2 \Big] \\[2mm]
           &\geq \frac{\sC_2}{2n\alpha_k} \cdot \Big[n\alpha_k\|\nabla f(x^k)\| + \sqrt{\sC_1/\sC_2}\cdot \rho_h(x^k,x^{k+1}) \Big]^2.
    \end{aligned}
\end{equation}
On the other hand, based on  properties of $\psi$ in \Cref{lem:KL}, we deduce
\begin{equation}
    \label{eq:asymptotic-7}
    \begin{aligned}
         \psi^\prime(\Upsilon_{k}-\bar f) & \overset{\text{(i)}}{\geq} \psi^\prime\big(|f(x^k)-\bar f| + \sC_3 {\textstyle \sum}_{i=k}^\infty n^3 \alpha_i^3\big)\overset{\text{(ii)}}{\geq} \Big[1/\psi^\prime(|f(x^k)-\bar{f}|) + \varsigma\sC_3^{\theta} \big({\sum}_{i=k}^\infty n^3 \alpha_i^3\big)^{\theta}  \Big]^{-1} \\[2mm] &\overset{\text{(iii)}}{\geq} \Big[\|\nabla f(x^k)\| + \varsigma\sC_3^{\theta} \big({\sum}_{i=k}^\infty n^3 \alpha_i^3\big)^{\theta}  \Big]^{-1},  
    \end{aligned}
\end{equation}
where (i) utilizes the monotonicity of $\psi^\prime$ (since $\psi$ is concave), (ii) follows from \eqref{eq:desingularization function}, and (iii) holds due to the KL inequality \eqref{eq:mer-kl}. Inserting estimates \eqref{eq:asymptotic-6}, \eqref{eq:asymptotic-7} into \eqref{eq:asymptotic-5}, we then obtain
\begin{align*}
 \psi(\Upsilon_{k}-\bar f)-&\psi(\Upsilon_{k+1}-\bar f) \geq \frac{\sC_2}{2} \cdot  \frac{\big[n\alpha_k\|\nabla f(x^k)\| + \sqrt{\sC_1/\sC_2}\cdot \rho_h(x^k,x^{k+1}) \big]^2}{n\alpha_k\|\nabla f(x^k)\| + \sC_3^{\theta}\varsigma \cdot  n\alpha_k ({\sum}_{i=k}^\infty n^3 \alpha_i^3)^{\theta}}\\[2mm]
&\geq \frac{\sC_2}{2} \cdot  \Big[ n\alpha_k\|\nabla f(x^k)\| + \sqrt{\sC_1/\sC_2}\cdot \rho_h(x^k,x^{k+1}) - \sC_3^{\theta}\varsigma \cdot  n\alpha_k \big({\sum}_{i=k}^\infty n^3 \alpha_i^3\big)^{\theta}\Big],
\end{align*}
where the last line uses $\frac{(a+b)^2}{a+c} \geq \frac{(a+b)^2}{a+b+c} \geq a+b-c$ for all $a,b,c\geq0$ with $a+c>0$. Rearranging this recursion and telescoping, along with $\lim_{k\to\infty}\Upsilon_k=\bar f$, we have 
\[
\sqrt{\sC_1\sC_2}\; \sum_{k=\tilde k}^\infty\;  \rho_h(x^k,x^{k+1}) + \sC_2\,  \sum_{k=\tilde k}^\infty\; n\alpha_k \|\nabla f(x^k)\|\leq 2\psi(\Upsilon_{\tilde k}) + \sC_2\sC_3^{\theta}\varsigma \cdot  \sum_{k=\tilde k}^\infty\; n\alpha_k \Big({\sum}_{i=k}^\infty n^3 \alpha_i^3 \Big)^{\theta} <\infty, 
\]
where the last inequality is due to the condition that 
${\sum}_{k=1}^\infty\; n\alpha_k ({\sum}_{i=k}^\infty n^3 \alpha_i^3)^{\theta} <\infty$. That is, 
\[
{\sum}_{k=\tilde k}^\infty\;  \rho_h(x^k,x^{k+1})<\infty,
\]
and $\{\nabla h(x^k)\}$ is a Cauchy sequence in the dual sequence. Therefore the limit $\lim_{k\to\infty}\nabla h(x^k)=\bar{y}$ exists. By the essential smoothness of $h$, there exist $\bar{x}\in\idom{h}\subseteq\mathcal{Z}$ s.t. $\bar{x} = \nabla h^{*}(\bar{y})$ and $\lim_{k\to\infty}x^k = \bar{x}$. Along with \Cref{thm:last-iterate convergence} (a), we show $x^k\to\bar x$ for some $\bar x\in\crit(f)$. 
\end{proof}

\section{Further discussions}
In the previous analysis of RRMD, we adopt several simplification on problem setting in order to prevent key novelty, the newly proposed dual Lipschitz continuity and DKC regularity, to be buried under technicalities. In this section, we will illustrate how these requirements can be relaxed.

\subsection{Relaxing the bounded gradients assumption}\label{subsec:relax bdd grad}
Throughout our analysis, we employ a bounded gradient assumption (see \cref{assumption 1.5}). While this can be relaxed to a more general expected smoothness condition \eqref{eq:expected smoothness}, adopting this generalization would significantly complicate the derivations and potentially obscure the central focus. Therefore, for clarity of exposition, we maintain the stronger bounded gradient assumption. In this subsection, we provide a proof sketch demonstrating how the general condition can be incorporated into the RRMD analysis.

The condition is formulated as follows: There exist constants $\sA,\sB,\tau\geq0$ such that 
\begin{equation}\label{eq:expected smoothness}
    \frac1n{\sum}_{i=1}^n\|\nabla f_i(x)\|^2 \leq \sA \left(f(x) - \underline{f}\right)^\tau + \sB^2 \quad  \forall x\in\mathcal{Z}
\end{equation}
where $\underline{f}$ can be any lower bound of $f_i$ over all $i\in\intset{n}$. This is a generalization of the standard \emph{expected smoothness condition}   \cite{gower2019sgd,gower2021stochastic,sebbouh2021almost}, which corresponds to the special case $\tau=1$. By introducing a free parameter $\tau$, \eqref{eq:expected smoothness} captures wider behaviors, a few examples are discussed below. 
\begin{example}\label{Example:FLip} \!\!\!\!
   \emph{If \Cref{assumption 1.5} is true, then condition \eqref{eq:expected smoothness} holds with $\tau=0$. }
\end{example}
\begin{example}\label{Example:FSmooth} \!\!\!\!
   \emph{If each $f_i$ is $\sL$-smooth and lower bounded, then condition \eqref{eq:expected smoothness} holds with $\tau=1$.}
\end{example}  
\begin{example}\label{Example:poly}
\!\!\!\!
\emph{For quadratic inverse problem where each $f_i(x):=\frac14\big(x^\top M_i x - b_i\big)^2$, with $M_i\succeq0$ and $ b_i\in\R_+$ being known data points, then condition \eqref{eq:expected smoothness} holds with $\tau = 3/2$.} 
\end{example}
\vspace{0.2cm}
\textbf{Proof sketch under the relaxed assumption \eqref{eq:expected smoothness}.} We begin with redefining the sets $\cX^k$ and the step-size upper bound $\bar \alpha_k$, previously defined in \eqref{def:Xk}, as follows: 
\begin{align*}
\cX^k&:=\bigg\{u\in\idom{h}: \rho_h(u,x^k) \leq \frac{\delta}{2n\nu} {\sum}_{i=1}^n\|\nabla f_{i}(x^k)\| \bigg\} 
    \quad \text{and}  \quad \bar \alpha_k:=\min\bigg\{ \frac{1}{2\sL \kappa(\cX^k)},\frac{\delta}{2\nu}\bigg\}
\end{align*}
where $\nu := \sqrt{\sA[\sC(f(x^1)-\underline{f})]^\tau+\sB^2}$ for some $\sC\in\R_+$. Above, $\nu$ plays the role of the gradients upper bound $\sG$. The main challenge is to derive a uniform bound on $\kappa(\cX^k)$ for all $k\in\N$ without knowing $\|\nabla f_i(x^k)\|$ a priori. We proceed via carefully structured steps:

\noindent\emph{Step 1: Update region.} We prove by induction that when $\alpha_k \leq \bar{\alpha}_k/n$, the intermediate iterates $y^{k,i} \in \cX^k$, $\forall i \in \intset{n+1}$. The key is to show $\rho_h(y^{k,i},x^k) \leq 2\alpha_k \sum_{j=1}^{i-1} \|\nabla f_{\pi^k_j}(x^k)\| \leq 2\alpha_k \sum_{j=1}^{n} \|\nabla f_{j}(x^k)\|$.

\noindent\emph{Step 2: Diameter control.} The diameter of $\cX^k$ can be bounded as \[\diam_h(\cX^k) \leq \frac{\delta}{n\nu}{\sum}_{i=1}^n\|\nabla f_{i}(x^k)\| \leq \frac{\delta}{\nu}\sqrt{\sA [f(x^k)-\underline{f}]^\tau + \sB^2}.\]

\noindent\emph{Step 3: Function bound.} Under $\alpha_k \leq \bar \alpha_k/n$, we derive the following bound via careful error estimates of $\err_k$ and descent-type property of $f$, 
\[
f(x^{k+1}) - \underline{f} \leq \exp\Big\{\beta\kappa(\cX^k)^3 \alpha_k^3 \Big\} \cdot [f(x^k)-\underline{f}]\quad \text{for some $\beta>0$.}
\]
\noindent\emph{Step 4: Simultaneous induction.} We establish the following claim for all $k \in \intset{T}$ by induction:
\begin{align*}
f(x^k) - \underline{f} \leq \exp\Big\{\beta\kappa_\delta^3 {\sum}_{i=1}^{k-1} \alpha_i^3 \Big\} \cdot [f(x^1)-\underline{f}] \leq \sC[f(x^1)-\underline{f}], \quad 
\kappa(\cX^k) \leq \kappa_\delta \quad \text{and} \quad  \bar{\alpha} \leq \bar{\alpha}_k
\end{align*}
for some constant $\sC>0$, where $\bar \alpha:=\min\{\frac{1}{2\sL\kappa_\delta},\frac{\delta}{2\nu}\}$  and $T$ denotes the number of total iterations.

\emph{Inductive step:} Assuming the above claim holds up to $k = t$. Then
\begin{itemize}
    \item It follows from Step 3, $\kappa(\cX^i)\leq \kappa_\delta$, $1\leq i\leq t$ and $\beta\kappa_\delta^3 {\sum}_{i=1}^T \alpha_i^3 \leq \log(\sC)$ that
    \[f(x^{t+1}) - \underline{f} \leq \exp\Big\{\beta\kappa_\delta^3 \alpha_t^3 \Big\} \cdot [f(x^t)-\underline{f}] \leq \exp\Big\{\beta\kappa_\delta^3 {\sum}_{i=1}^t \alpha_i^3 \Big\} \cdot [f(x^1)-\underline{f}] \leq \sC \cdot [f(x^1)-\underline{f}]. \]
    \item From Step 2 and $f(x^{t+1}) - \underline{f} \leq \sC [f(x^1) - \underline{f}]$, we have $\diam_h(\cX^{t+1}) \leq \delta$, which combined with DKC, shows $\kappa(\cX^{t+1})\leq \kappa_\delta$. By the definition of $\bar \alpha_k$, it also holds that $\bar \alpha \leq \bar\alpha_{t+1}$. 
\end{itemize}

\noindent\emph{Step 5: Recovering the previously established results.} With the uniform bound $\kappa(\mathcal{X}^k) \leq \kappa_\delta$ established and with the step-size constraint simplified to $\alpha_k \leq \bar{\alpha}$, one can safely follow the derivations in previous sections to get the complexity bound and last-iterate convergence.

\subsection{Extension to multi-block setting}
In previous sections, we focus on the single-block $h$. This section will show that the obtained results can be readily extended to the separable kernels $h(x)=\sum_{i=1}^m h_i(x_{[i]})$. Correspondingly, the vector is split into $m$ blocks: $x=(x_{[1]},\ldots,x_{[m]})$ with sizes $d_1,\ldots,d_m$ and $\sum_{i=1}^m d_i =d$, and any subset $\cX\subseteq\Rd$ is decomposed into $\cX=\cX_1\times \cdots \times \cX_m$, meaning that if $x\in\cX$, then $x_{[i]}\in\cX_i$. Moreover, we denote $\nabla_{[j]} \; f_i(\cdot)$  the partial derivatives of $f_i$ w.r.t. the $j$-th variable block. It holds that $[\nabla f_i(\cdot)]_{[j]} =\nabla_{[j]}\, f_i(\cdot)$.

In the convergence analysis of RRMD, the key ingredient is to deal with the inner product $$
\big\langle  \nabla f_{\pi^k_i}(x^k) - \nabla f_{\pi^k_i}(y^{k,i}) , x^{k+1} - x^k  \big\rangle,
$$
which, in the multi-block case, can be expressed as:
\begin{align*}
&\hspace{0.4cm}\Big\langle  \nabla f_{\pi^k_i}(x^k) - \nabla f_{\pi^k_i}(y^{k,i}), x^{k+1} - x^k  \Big\rangle\\
& = {\sum}_{j=1}^m \Big\langle \nabla_{[j]} \; f_{\pi^k_i}(x^k) - \nabla_{[j]} \; f_{\pi^k_i}(y^{k,i}),\; x^{k+1}_{[j]}  - x^k_{[j]}  \Big\rangle\\
&\leq {\sum}_{j=1}^m  \; \frac{\kappa_\delta \cdot n\alpha_k}{2\mu_j(\cX^k_j)} \big\|\nabla_{[j]} \; f_{\pi^k_i}(x^k) - \nabla_{[j]} \; f_{\pi^k_i}(y^{k,i}) \big\|^2 + {\sum}_{j=1}^m \; \frac{\mu_j(\cX^k_j)}{\kappa_\delta\cdot 2n\alpha_k}\|x^{k+1}_{[j]}  - x^k_{[j]}\|^2
    \\&\leq {\sum}_{j=1}^m \; \frac{\kappa_\delta \cdot n\alpha_k}{2\mu_j(\cX^k_j)} \big\| \nabla_{[j]} \; f_{\pi^k_i}(x^k) - \nabla_{[j]} \; f_{\pi^k_i}(y^{k,i}) \big\|^2 + \frac{\cD(x^k,x^{k+1}) + \cD(x^{k+1},x^k)}{2n\alpha_k}
\end{align*}
where the last line is because  $x^k,x^{k+1}\in\cX^k$ with $\kappa(\cX^k)\leq \kappa_\delta$ (see \Cref{lem:region}),  the bound \begin{align*}
    \|x^k_{[j]}-x^{k+1}_{[j]}\|^2 &\leq \frac{\rho_{h_j}(x^k_{[j]},x^{k+1}_{[j]})^2}{\mu_j(\cX^k_j)} \leq \frac{\cL_j(\cX^k_j)}{\mu_j(\cX^k_j)} \cdot [{\cal D}_{h_j}(x^k_{[j]},x^{k+1}_{[j]})+{\cal D}_{h_j}(x^{k+1}_{[j]},x^k_{[j]})] \\[2mm] & \leq \kappa_\delta [{\cal D}_{h_j}(x^k_{[j]},x^{k+1}_{[j]})+{\cal D}_{h_j}(x^{k+1}_{[j]},x^k_{[j]})],
\end{align*} and $\cD(x,y)=\sum_{j=1}^m {\cal D}_{h_j}(x_{[j]},y_{[j]})$ for all $x,y\in\cZ$. In view of this, we update accordingly the definitions of the stochastic errors $\err_k$ and the step-size upper bound $\bar \alpha_k$ as follows:
$$
\cE_k:=\frac{\kappa_\delta \cdot \alpha_k}{2} \sum_{i=1}^n \sum_{j=1}^m \frac{1}{\mu_j(\cX^k_j)} \| \nabla_{[j]}\, f_{\pi^k_i}(x^{k}) - \nabla_{[j]}\, f_{\pi^k_i}(y^{k,i}) \|^2\quad \text{and} \quad \bar \alpha_k:=\min\Big\{\frac{1}{2\Gamma(\cX^k)\sL\kappa_\delta},\frac{\delta}{2\sG}\Big\},
$$
where \[\Gamma(\cX)=\min  \Big\{8m, 8e\Big[2+3\ln\Big(\frac{\max_j\mu_j(\cX_j)}{\min_j\mu_j(\cX_j)}\Big)\Big]\Big\}^{1/2}.\]
Below, we establish the error estimate for multi-block kernels with derivations in Appendix \ref{proof:lem:err bound multi}.  
\begin{proposition}[Stochastic error bound in multi-block setting]
\label[proposition]{lem:err bound multi}
Under \cref{assumption 1,assumption 1.5}.
Let $\{x^k\}$ and $\{y^{k,i}\}$ be generated by \cref{alg:shuffle} with step-size $0<\alpha_k \leq \bar \alpha_k/n.$ 
 \begin{enumerate}[label=\textup{\textrm{(\alph*)}},leftmargin=3em,topsep=2pt,itemsep=.5ex,partopsep=0ex]
 \item Then, it holds that
\[\err_k \leq \frac{\Gamma^2(\cX^k)\kappa_\delta(\kappa_\delta  \sL\sG)^2}{2\mu} \cdot n^3 \alpha_k^3.\]
\item  Moreover, if $\pi^k$ is generated by uniform shuffling scheme, we have 
\[
\Exp_k[\err_k]\leq 4\Gamma^2(\cX^k)\sL^2 \kappa_\delta (\kappa_\delta  n\alpha_k)^3\cdot \cG(x^k) + \frac{4\Gamma^2(\cX^k)\kappa_\delta(\kappa_\delta  \sL\sG)^2}{\mu} \cdot n^2\alpha_k^3.
\]
 \end{enumerate}
\end{proposition}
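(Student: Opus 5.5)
The plan mirrors the single-block proof of \cref{lem:err bound}, with \cref{lem:Lipschitz continuity-m} replacing \cref{lem:Lipschitz continuity} as the dual Lipschitz tool. Fix $k$ and $i$, and apply \cref{lem:Lipschitz continuity-m} to $f_{\pi^k_i}$ at the pair $(x^k,y^{k,i})$ on the set $\cX^k$. By the multi-block version of \cref{lem:region}, both points lie in $\cX^k$, and $\kappa(\cX^k)\le\kappa_\delta$. The lemma needs each block $\cX^k_j$ and its image $\nabla h_j(\cX^k_j)$ to be convex. I would secure this by replacing $\cX^k$ with a product region: the preimage under the blockwise mirror maps of a product of dual balls of radius $\delta/2$ centred at $\nabla h_j(x^k_{[j]})$, intersected if necessary with its primal convex hull. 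This region still has dual block diameters at most $\delta$, so DKC still gives $\kappa\le\kappa_\delta$. The lemma then yields
\[
\sum_j \frac{\|\nabla_{[j]}f_{\pi^k_i}(x^k)-\nabla_{[j]}f_{\pi^k_i}(y^{k,i})\|^2}{\mu_j(\cX^k_j)}\le \sL^2\Gamma^2(\cX^k)\kappa_\delta^2\sum_j\frac{\rho_{h_j}^2(x^k_{[j]},y^{k,i}_{[j]})}{\mu_j(\cX^k_j)}.
\]

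\textbf{Part (a).} I bound the dual displacement blockwise through the update \eqref{eqn:rrmd-update}:
\[
\rho_{h_j}(x^k_{[j]},y^{k,i}_{[j]})\le \alpha_k\sum_{l<i}\|\nabla_{[j]}f_{\pi^k_l}(y^{k,l})\|.
\]
Using $\mu_j(\cX^k_j)\ge\mu$ and $\sum_j\|\nabla_{[j]}g\|^2=\|g\|^2\le\sG^2$, together with Cauchy--Schwarz over $l$, I get
\[
\sum_j\rho_{h_j}^2/\mu_j\le (i-1)^2\alpha_k^2\sG^2/\mu.
\]
Summing over $i$ gives a factor $\sum_i (i-1)^2\le n^3/3$. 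Multiplying by the prefactor $\kappa_\delta\alpha_k/2$ gives $\err_k\le\frac{\Gamma^2\kappa_\delta^3\sL^2\sG^2}{2\mu}n^3\alpha_k^3$ up to harmless constants, and I would absorb the extra factor $\kappa_\delta$ in the statement into the slack. The step size $\bar\alpha_k$ carrying $\Gamma(\cX^k)$ is only needed to keep iterates inside $\cX^k$ and to close part (b).

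\textbf{Part (b).} I follow the appendix proof of \cref{lem:err bound}(b). Split
\[
\nabla h(x^k)-\nabla h(y^{k,i})=\alpha_k\sum_{l<i}\nabla f_{\pi^k_l}(x^k)+\alpha_k\sum_{l<i}\big(\nabla f_{\pi^k_l}(y^{k,l})-\nabla f_{\pi^k_l}(x^k)\big).
\]
For the first term, the without-replacement variance identity gives
\[
\Exp_k\Big\|\sum_{l<i}\nabla_{[j]}f_{\pi_l}(x^k)\Big\|^2\le (i-1)^2\|\nabla_{[j]}f(x^k)\|^2+(i-1)\sG^2
\]
blockwise, summed over $j$. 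The mean part is controlled by $\sum_j\|\nabla_{[j]}f(x^k)\|^2/\mu_j\le 2\kappa_\delta\cG(x^k)$, the multi-block analogue of \cref{lem:valid complexity}. The second term is bounded recursively by the weighted dual Lipschitz inequality above, giving a factor of order $\sL^2\Gamma^2\kappa_\delta^2 n^2\alpha_k^2$ times the errors. Since $\alpha_k\le\bar\alpha_k/n$ makes this factor at most $1/2$, a Gronwall-type absorption closes the recursion.

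\textbf{Main obstacle.} I expect the hardest part to be justifying the convexity hypotheses of \cref{lem:Lipschitz continuity-m} on a region containing the whole epoch. It must be done while keeping the weighted $1/\mu_j$ norms consistent between the error definition and the lemma, so that the multi-block analogue of the bound $\|\nabla f\|^2/\mu\le 2\kappa_\delta\cG$ matches the weights exactly.
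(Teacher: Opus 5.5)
Your proposal is correct and follows essentially the same route as the paper. It applies \cref{lem:Lipschitz continuity-m} with $\kappa(\cX^k)\le\kappa_\delta$ and bounds the dual displacement via \eqref{eqn:rrmd-update} and $\sG$ for (a); for (b) it splits off the gradients at $x^k$, invokes \cref{lem:sample-without-re}, absorbs the recursive error term using $\alpha_k\le\bar\alpha_k/n$, and closes with $\sum_j\|\nabla_{[j]}f(x^k)\|^2/\mu_j(\cX^k_j)\le 2\kappa_\delta\,\cG(x^k)$. Your two-term split with the exact variance identity, instead of the paper's three-term split via $(a+b+c)^2\le 2a^2+4b^2+4c^2$, only changes constants.

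Two minor points. First, your part-(a) constant $\Gamma^2\kappa_\delta^3\sL^2\sG^2/(2\mu)$ already equals the stated $\Gamma^2\kappa_\delta(\kappa_\delta\sL\sG)^2/(2\mu)$, so there is no extra $\kappa_\delta$ to absorb. Second, the paper simply applies \cref{lem:Lipschitz continuity-m} on $\cX^k$, implicitly as the product $\prod_j\cX^k_j$ of its projections, which is exactly your product of preimages of dual balls, with no further modification. Your ``intersection with its primal convex hull'' is vacuous and cannot supply primal convexity for multi-dimensional blocks (cf.\ \Cref{fig:mirror}); this is a hypothesis the paper's proof also passes over.
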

Compared with the non-separable case (see \Cref{lem:err bound}), the upper bounds for $\err_k$ are amplified by $\Gamma^2(\cX^k)\kappa_\delta$. Hence, the approximate descent property \Cref{prop:descent property} continues to hold with the factors of $n^3\alpha_k^3$ in \eqref{eq:prop descent 0}, \eqref{eq:prop descent 1} and \eqref{eq:prop descent 2} being enlarged by $\Gamma^2(\cX^k)\kappa_\delta$. Since $\Gamma^2(\cX^k) \leq 8m$, the asymptotic results in \Cref{thm:last-iterate convergence} continue to hold. The complexity of RRMD will change due to the new estimates of stochastic errors for the multi-block kernels. 
Below, we present the new complexity bounds. The derivation mirrors that of \cref{thm:complexity-IMD,thm:complexity-RRMD}, and is omitted here. 
\begin{theorem}
Let \cref{assumption 1,assumption 1.5} hold and let $\{x^k\}$ be generated by \cref{alg:shuffle} with $0\leq \alpha_k \leq \bar \alpha_k/n$. Let $\tilde x^T$ be randomly sampled from $\{x^1,\ldots,x^T\}$ with $\Prob(\tilde x^T=x^k) = \alpha_k/(\sum_{i=1}^T\alpha_i)$. 
 \begin{enumerate}[label=\textup{\textrm{(\alph*)}},leftmargin=3em,topsep=2pt,itemsep=.5ex,partopsep=0ex]
\item It holds that
\[
    \Exp[\cG(\tilde x^{T})] \leq  \frac{4\kappa_\delta \cdot (f(x^1)-f^*)}{{\sum}_{k=1}^{T}n\alpha_k} + \max_{k\in\intset{T}}\Gamma^2(\cX^k)\cdot \frac{3\kappa_\delta^4  \sL^2\sG^2}{\mu} \cdot \frac{{\sum}_{k=1}^{T}n^3 \alpha_k^3}{{\sum}_{k=1}^{T}n\alpha_k}.
\]
 \item  If $\{\pi^k\}_k$ are generated via uniform shuffling and $\alpha_k \leq 1/[7n\sL\Gamma(\cX^k)\kappa_\delta^{5/2}]$, then
\[
\Exp[\cG(\tilde x^{T})] \leq  \frac{8\kappa_\delta \cdot (f(x^1)-f^*)}{{\sum}_{k=1}^{T}n\alpha_k} + \max_{k\in\intset{T}}\Gamma^2(\cX^k)\cdot  \frac{48\kappa_\delta^4  \sL^2\sG^2}{\mu}  \cdot \frac{{\sum}_{k=1}^{T}n^2 \alpha_k^3}{{\sum}_{k=1}^{T}n\alpha_k}.
\]
\end{enumerate}
\end{theorem}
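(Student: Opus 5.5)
The plan is to follow the proofs of \cref{thm:complexity-IMD,thm:complexity-RRMD} essentially verbatim. The only change is that the per-epoch error bounds now come from \Cref{lem:err bound multi} instead of \Cref{lem:err bound}, and the step-size factors $\Gamma(\cX^k)$ vary with $k$.

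\textbf{Step 1: re-check the multi-block ingredients.} With $\alpha_k\le\bar\alpha_k/n\le \delta/(2n\sG)$, the bound on $\rho_{h_j}$-distances along an epoch is unchanged. So \Cref{lem:region} still gives $y^{k,i}\in\cX^k$ and $\kappa(\cX^k)\le\kappa_\delta$ blockwise. Next I would verify that \Cref{lem:preliminary descent} holds with the multi-block $\err_k$. This is exactly the blockwise Young-inequality computation displayed before \Cref{lem:err bound multi}, which absorbs $\|x^{k+1}_{[j]}-x^k_{[j]}\|^2$ into $\cD(x^k,x^{k+1})+\cD(x^{k+1},x^k)$. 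I would also verify that \Cref{lem:valid complexity} goes through, applying \Cref{lem:important bounds} block by block and summing over $j$. The constants there stay the same because each block uses its own $\mu_j,\cL_j$ with ratio at most $\kappa_\delta$.

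\textbf{Step 2, part (a).} I combine the preliminary descent with \Cref{lem:valid complexity}, exactly as in the proof of \eqref{eq:prop descent 1}, to get
\[
f(x^{k+1})+\frac{n\alpha_k}{4\kappa_\delta}\cG(x^k)\le f(x^k)+\tfrac32\err_k .
\]
Inserting \Cref{lem:err bound multi}(a) gives the error term $\tfrac34\Gamma^2(\cX^k)\kappa_\delta^3\sL^2\sG^2 n^3\alpha_k^3/\mu$. I then bound $\Gamma^2(\cX^k)\le\max_{k\in\intset{T}}\Gamma^2(\cX^k)$, telescope over $k=1,\dots,T$, use $f\ge f^*$, and divide by $\sum_k n\alpha_k/(4\kappa_\delta)$. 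This yields the stated bound, with the $\kappa_\delta^4$ arising from the extra factor $\kappa_\delta$ in \Cref{lem:err bound multi}.

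\textbf{Step 3, part (b).} I take $\Exp_k$ of the same inequality and use \Cref{lem:valid complexity} together with \Cref{lem:err bound multi}(b). The $\cG$-dependent error is
\[
\tfrac32\cdot 4\Gamma^2(\cX^k)\sL^2\kappa_\delta^4 n^3\alpha_k^3\,\cG(x^k),
\]
and it must be dominated by $\frac{n\alpha_k}{8\kappa_\delta}\cG(x^k)$. That requires $48\Gamma^2\sL^2\kappa_\delta^5n^2\alpha_k^2\le1$, which holds under $\alpha_k\le 1/[7n\sL\Gamma(\cX^k)\kappa_\delta^{5/2}]$ since $49\ge48$. This leaves
\[
\Exp_k f(x^{k+1})+\frac{n\alpha_k}{8\kappa_\delta}\cG(x^k)\le f(x^k)+6\Gamma^2(\cX^k)\kappa_\delta^3\sL^2\sG^2n^2\alpha_k^3/\mu .
\]
I then take total expectations and telescope. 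Multiplying by $8\kappa_\delta/\sum_k n\alpha_k$ produces the constant $48\kappa_\delta^4$.

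\textbf{Main obstacle.} The delicate point is that $\Gamma(\cX^k)$, and hence the admissible step size, is random because it depends on $x^k$. Pulling $\max_k\Gamma^2(\cX^k)$ out of an expectation is only legitimate with a deterministic bound. In (b) I would therefore use the deterministic bound $\Gamma^2(\cX^k)\le 8m$ inside the expectation, and read the max as a worst-case deterministic quantity, assuming the step-size constraint holds almost surely. In (a) the whole argument is pathwise, so this issue does not arise.
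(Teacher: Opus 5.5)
Your proposal is correct and follows the route the paper intends. The paper omits this proof, saying only that one repeats the arguments of \Cref{thm:complexity-IMD,thm:complexity-RRMD} with the error bounds of \Cref{lem:err bound multi}, which are amplified by $\Gamma^2(\cX^k)\kappa_\delta$. Your constant tracking reproduces the stated bounds: the $\tfrac32\err_k$ step, the $48/49$ absorption under $\alpha_k\le 1/[7n\sL\Gamma(\cX^k)\kappa_\delta^{5/2}]$, and the resulting $\kappa_\delta^4$. Your remark on the randomness of $\Gamma(\cX^k)$ in (b) fills a point the paper leaves implicit. One refinement: with deterministic step sizes you do not need to fall back to $8m$, since $\Exp[\Gamma^2(\cX^k)]\le\Exp[\max_{k'\in\intset{T}}\Gamma^2(\cX^{k'})]$ lets you keep $\Exp[\max_{k}\Gamma^2(\cX^k)]$ on the right-hand side.
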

\begin{remark}
    According to the definition of $\Gamma(\cX)$, we have
    \[\Gamma(\cX^k) \leq \sqrt{8m}\quad \text{and}\quad \Gamma(\cX^k) \leq \sqrt{8e\Big[2+3\ln\Big(\frac{\max_j\mu_j(\cX_j^k)}{\min_j\mu_j(\cX_j^k)}\Big)\Big]}.\]
    The second bound grows logarithmically with the ratio of the strong convexity parameters, it remains small even for large
ratios. For instance, even in the extremely imbalanced case where 
$\frac{\max_j\mu_{j}(\cX_j^k)}{\min_j\mu_{j}(\cX_j^k)}=10^{15}$, one still has $\Gamma(\cX^k) \leq \min\{48,2\sqrt{m}\}$. 
\end{remark}

\section{Numerical experiments}\label{sec:experiment}
In this section, we examine the performance of random reshuffling mirror descent (RRMD) and the incremental mirror descent (IMD) methods on phase retrieval and Poisson inverse problems, with stochastic mirror descent (SMD) being the benchmark. Additionally, we also test the corresponding momentum variants (denoted RRMD-M, IMD-M, SMD-M) of these methods, where the update follows
$$\begin{cases}
x^{k+1}=\argmin_z \, f(x^k)+ \langle v^{k+1},z-x^k\rangle + \frac{1}{\alpha_k}\, \cD(z,x^k),\\[1mm]
v^{k+1} \hspace{0.02cm}= \beta v^k + \alpha_k\nabla f_{i_k}(x^k) \hspace{1.343cm}\text{with}\hspace{1.34cm} v^1 := 0.
\end{cases}$$  
Here, the index $i_k\in\intset{n}$ is determined by the sampling scheme, $v^k\in\Rd$ represents the momentum term, and the momentum parameter is fixed at $\beta=0.9$ throughout this section. To measure the convergence behavior of the tested algorithms, we employ the relative error metric $\frac{f(x^k) - f(\hat x)}{f(\hat x)}$, where $\hat x$ fulfills $\|\nabla f(\hat x)\|\leq 10^{-8}$.

\subsection{Phase retrieval}
\begin{figure}[t]
    \centering
    \begin{tikzpicture}[scale=.95]
    \node[right] at (-5.5,-2) {\includegraphics[scale=.8,trim=0 0 0 0,clip]{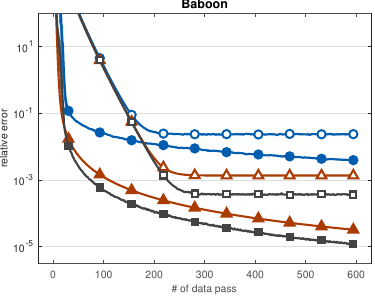}};
    \node[right] at (0.15,-2) {\includegraphics[scale=.8,trim=7 0 0 0,clip]{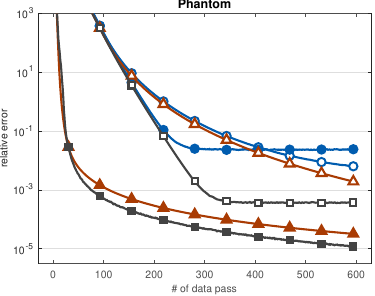}};
    \node[right] at (5.5,-2) {\includegraphics[scale=.8,trim=7 0 0 0,clip]{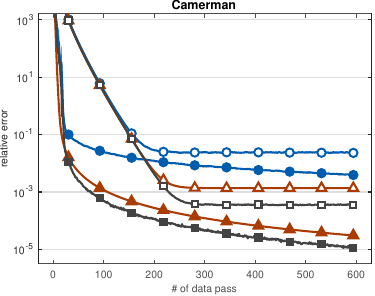}};
    \node[right] at (-2,.8) {
    \includegraphics[width=.6\textwidth]{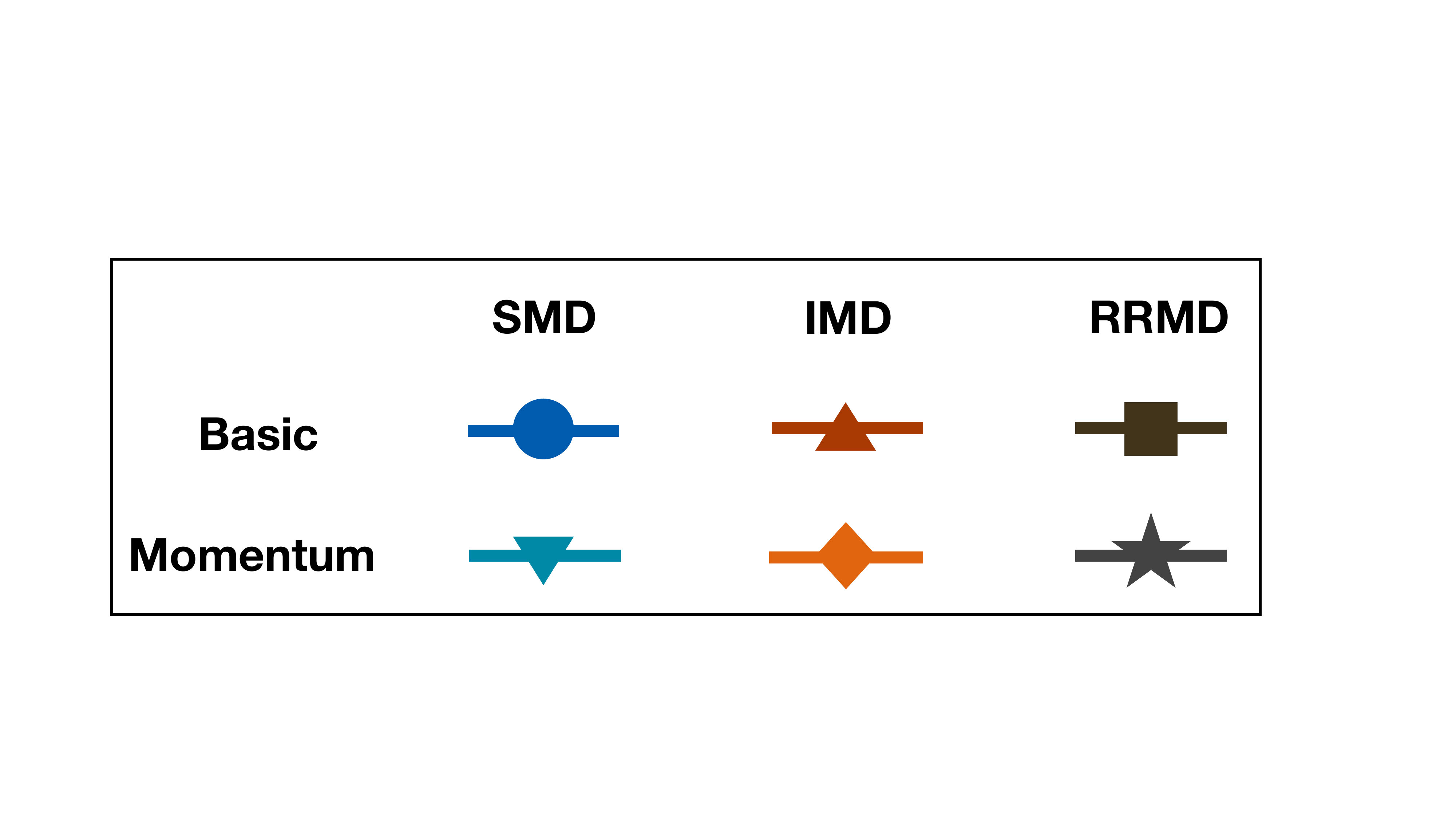}
    };
    \end{tikzpicture}
    \vspace{-.8cm}
    \caption{Numerical results on phase retrieval problem. Performance over five independent run. }
    \label{fig:exp-1}
\end{figure}
We conduct experiments on phase retrieval problems, which is a specific class of quadratic inverse problems investigated in \cite{bolte2018first}. The problem takes the form:
\[
\min_{x\in\Rd} \quad f(x):=\frac1n {\sum}_{i=1}^n (|\langle a_i,x\rangle|^2 - b_i^2)^2,
\]
where $a_{i}\in\Rd$ denote the sampling vectors and $b_{i}$ are measurement data.
According to (\cite[Lemma 5.1]{bolte2018first}), $f:\Rd\to\R$ is smooth relative to the quartic kernel $h(x):=\frac14\|x\|^4+\frac12\|x\|^2$.  

We test the above algorithms for three image signals: Baboon, Shepp-Logan phantom and Cameraman. We crop those images to $64\times64$ pixels and vectorize them into $\Rd$ vector $x_{\text{true}}$ with dimension $d=4096$. The total number of samples is $n=6d=24576$. Measurement vectors $a_i$ are drawn i.i.d. from standard normal distribution ${\cal N}(0,1)$. The observations $b_i$ are generated via $b_i=|\langle a_i,x_{\text{true}} \rangle| + e_i$ where $e_i\sim {\cal N}(0,0.1)$. For all algorithms, we maintain consistent parameters. We fix batch size to $128$ and utilize diminishing step-size $\alpha_k = \min\{10^{-5}, \alpha/k\}$, where the upper bound $10^{-5}$ is imposed to avoid divergence\footnote{In the Shepp-Logan phantom, $10^{-5}$ is still too large for SMD-M; thus, we reduce the cap to $10^{-6}$ for SMD-M.} , $k$ represents the current epoch and $\alpha$ is the tunable hyper-parameters. For fair comparison, we evaluate those algorithms across a logarithmic grid of hyper-parameter $\alpha \in\{10^{-6},10^{-5},\ldots, 10^{-1},10^0\}$ and report their best convergence results in \Cref{fig:exp-1}, where the performance is measured by the decay of the relative error as a function of the number of data passes.

In \Cref{fig:exp-1}, it can be observed that both IMD and RRMD outperforms the SMD method, while RRMD dominates IMD in all test instances. Though not analyzed in the paper, we also test the momentum variant of these paper as momentum is almost a default enhancement in the practical implementation of  stochastic first-order methods. In the experiment, incorporating the momentum provides a stable performance boost, and RRMD-M consistently
demonstrates the most favorable convergence performance.

\subsection{Poisson inverse problem}
\begin{figure}
 \begin{tikzpicture}[scale=.95]
    \node[right] at (-5.5,-2) {\includegraphics[scale=.8,trim=0 0 0 0,clip]{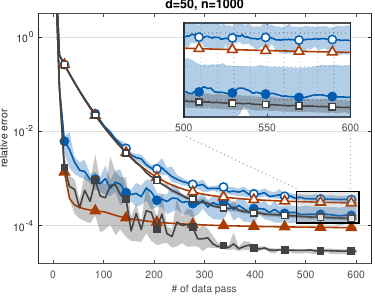}};
    \node[right] at (0.15,-2) {\includegraphics[scale=.8,trim=7 0 0 0,clip]{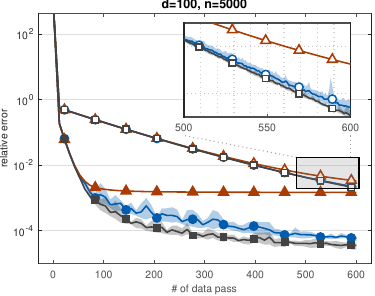}};
    \node[right] at (5.5,-2) {\includegraphics[scale=.8,trim=7 0 0 0,clip]{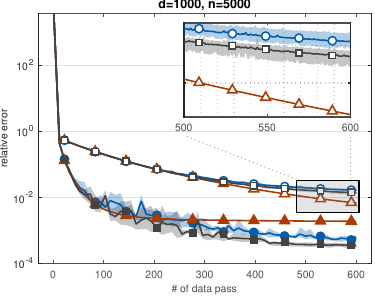}};
    \node[right] at (-2,.8) {
    \includegraphics[width=.6\textwidth]{figures/legend.pdf}
    };
    \end{tikzpicture}
    \vspace{-.8cm}
    \caption{Numerical results on Poisson inverse problem. Performance over five independent run. }
    \label{fig:exp-2}
\end{figure}

We consider the regularized Poisson inverse problem 
\[
\min_{x\in\Rd_{+}} \; f(x):= \frac1n \cdot D_{\text{KL}}(b,Ax):=\frac1n{\sum}_{i=1}^n \; (a_i^\top x) \cdot \log(a_i^\top x/b_i) - (a_i^\top x - b_i),
\]
where $D_{\text{KL}}$ denotes the generalized Kullback-Leibler (KL) divergence \cite{csiszar1991least}, $A:=[a_1^\top,\ldots, a_n^\top]^\top \in\R^{n\times d}_{++}$ and $b:=[b_1,\ldots,b_n]^\top\in\R^n_{++}$. For this instance, it can be shown that the objective function $f$ is smooth relative to the regularized Burg's entropy kernel \cite{bauschke2017descent}.

The ground-truth vector $x_{\text{true}}\in\Rd$ is sampled uniformly from the hypercube $[0,10]^d$. The measurement matrix $A=[a_1^\top,\ldots,a_n^\top]^\top\in\R^{n\times d}_+$ is sampled i.i.d. from the Student's t-distribution with 5 degrees of freedom and taking their absolute values. From each row $a_i^\top$ of $A$, we generate the corresponding observation $b_i$ from a Poisson distribution with mean parameter $a_i^\top x_{\text{true}}$. 
We conduct numerical experiments on three problem scales: $(d,n) = (50,1000)$, $(100,5000)$, and $(1000,5000)$. The definition of the relative error and the strategy for selecting the batch size and step-size follows that of the phase retrieval experiment. Specifically, we fix the batch size at $128$ and employ a diminishing step-size 
\(
\alpha_k = \min\{\gamma, \alpha/k\}
\).  
For the instance $(d,n)=(100,5000)$, we set $\gamma=0.1$ and evaluate all algorithms over a logarithmic grid of parameters $\alpha \in \{10^{-2},10^{-1},\ldots,10^{2},10^{3}\}$. 
For the other two instances $(d,n)= (50,1000)$ and $(1000,5000)$, we set $\gamma=1$ and select the optimal $\alpha$ from the same logarithmic grid. We report the performance in \Cref{fig:exp-2}.

Similar to the previous experiments, it can be observed that momentum provides a consistent performance boost for all the methods. RRMD-M dominates all the algorithms, while RRMD outperforms all the non-momentum algorithms. However, it is worth noting that IMD and IMD-M are occasionally outperformed by SMD and SMD-M, respectively. A possible reason is that IMD and IMD-M only shuffle the data once and then they compute the ``stochastic'' gradients in a deterministic and cyclic style, leading to slow improvement IMD and IMD-M happen to encounter a bad data ordering. This in turn motivates the necessity of adopting random reshuffling in each epoch to avoid the bad ordering. 
\appendix
\section{Preparatory tools}\label{app:prep tool}


We start with  the well-known three points identity \cite{chen1993convergence}. 
\begin{lemma}[Three points identity] 
\label[lemma]{lem:three point}
Let $h: \mathbb{R}^d \to (-\infty,+\infty]$ be a kernel function associated with $\mathcal{Z}$. For any
$x \in \dom{h}$, and $y,z \in \idom{h}$, we have
\[
\cD(x,z) - \cD(x,y) - \cD(y,z) = \langle\nabla h(y) - \nabla h(z), x-y\rangle.
\]
\end{lemma}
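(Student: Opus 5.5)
The plan is to expand both sides directly from the definition $\cD(u,v)=h(u)-h(v)-\langle\nabla h(v),u-v\rangle$. The identity is purely algebraic, so no convexity or regularity beyond differentiability of $h$ at $y$ and $z$ is needed. That differentiability holds because $y,z\in\idom{h}$ and $h\in\cC^2(\mathrm{int}(\cZ))$. Only the value $h(x)$ is required at $x$, and this is finite since $x\in\dom{h}$.

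First we write out the three divergences:
\begin{align*}
\cD(x,z)&=h(x)-h(z)-\langle\nabla h(z),x-z\rangle,\\
\cD(x,y)&=h(x)-h(y)-\langle\nabla h(y),x-y\rangle,\\
\cD(y,z)&=h(y)-h(z)-\langle\nabla h(z),y-z\rangle.
\end{align*}
Next we form $\cD(x,z)-\cD(x,y)-\cD(y,z)$. The function values cancel completely, since $h(x)-h(z)-h(x)+h(y)-h(y)+h(z)=0$. The remaining terms are
\[
-\langle\nabla h(z),x-z\rangle+\langle\nabla h(y),x-y\rangle+\langle\nabla h(z),y-z\rangle.
\]
In the first and third terms we combine the $\nabla h(z)$ contributions, using $(x-z)-(y-z)=x-y$, which gives $-\langle\nabla h(z),x-y\rangle$. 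Adding this to the middle term yields $\langle\nabla h(y)-\nabla h(z),x-y\rangle$, which is the claimed identity.

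There is no real obstacle here. The only point requiring care is domain bookkeeping: every gradient that appears is evaluated only at $y$ or $z$, both of which lie in the interior, so all expressions are well defined even when $x$ lies on the boundary of $\dom{h}$.
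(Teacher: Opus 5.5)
Your proof is correct: the function values cancel, and the two $\nabla h(z)$ terms combine via $(x-z)-(y-z)=x-y$ to give exactly $\langle\nabla h(y)-\nabla h(z),x-y\rangle$. The paper gives no proof of its own and cites the identity as well known (Chen--Teboulle); your direct expansion from the definition of $\cD$ is the standard argument behind that citation, and you handled the domain bookkeeping correctly.
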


We restate the variance of sampling without-replacement from \cite[Lemma 1]{mishchenko2020random}. 
\begin{lemma}[Sampling without replacement]
\label[lemma]{lem:sample-without-re}
 Given $X^1,\dots,X^n \in \Rd$, and denote $\bar X := \frac{1}{n} \sum_{i=1}^n X^i$ and $\sigma^2 := \frac{1}{n} \sum_{i=1}^n \|X^i -\bar X \|^2$. Let $t \in \intset{n}$ be fixed and let $X^{\pi_1},\dots,X^{\pi_t}$ be sampled uniformly without replacement from $\{X^1,\dots,X^n\}$. Then, it holds that
\[
	\Exp[\bar X_\pi] = \bar{X} \quad \text{and} \quad \Exp[\|\bar X_\pi-\bar{X}\|^2] = \frac{n-t}{t(n-1)}\cdot\sigma^2,\quad \text{where}\quad \bar X_\pi := \frac{1}{t}{\sum}_{i=1}^t X^{\pi_i}.
	\]
\end{lemma}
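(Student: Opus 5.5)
This identity is the classical result of Mishchenko et al.\ (their Lemma 1), and we would prove it by a direct second-moment computation over the uniform permutation.

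\textbf{Unbiasedness.} Since $\pi$ is a uniform random permutation, each marginal $X^{\pi_i}$ is uniform over $\{X^1,\dots,X^n\}$. Hence $\Exp[X^{\pi_i}]=\bar X$ for every $i$, and averaging over $i=1,\dots,t$ gives $\Exp[\bar X_\pi]=\bar X$.

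\textbf{Variance: reduction.} Set $Y^i:=X^i-\bar X$, so that $\sum_{i=1}^n Y^i=0$ and $\frac1n\sum_i\|Y^i\|^2=\sigma^2$. Then
\[
\Exp\|\bar X_\pi-\bar X\|^2=\frac{1}{t^2}\Big[\sum_{i=1}^t\Exp\|Y^{\pi_i}\|^2+\sum_{i\neq j}\Exp\langle Y^{\pi_i},Y^{\pi_j}\rangle\Big].
\]
The diagonal terms each equal $\sigma^2$, by the uniform marginals.

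\textbf{Variance: cross terms.} For $i\neq j$, the pair $(\pi_i,\pi_j)$ is uniform over the ordered pairs of distinct indices. So
\[
\Exp\langle Y^{\pi_i},Y^{\pi_j}\rangle=\frac{1}{n(n-1)}\sum_{p\neq q}\langle Y^p,Y^q\rangle .
\]
Because $\sum_p Y^p=0$, we have $\sum_{p\neq q}\langle Y^p,Y^q\rangle=\big\|\sum_p Y^p\big\|^2-\sum_p\|Y^p\|^2=-n\sigma^2$. Each cross term therefore equals $-\sigma^2/(n-1)$.

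\textbf{Conclusion.} There are $t(t-1)$ ordered cross pairs, so
\[
\Exp\|\bar X_\pi-\bar X\|^2=\frac{1}{t^2}\Big[t\sigma^2-\frac{t(t-1)\sigma^2}{n-1}\Big]=\frac{\sigma^2}{t}\cdot\frac{n-t}{n-1},
\]
which is the claimed formula. For the edge case $n=1$ we have $t=1$ and $\sigma=0$, and both sides are interpreted as $0$.

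The only delicate point is the cross-term identity, which uses the centering $\sum_p Y^p=0$ together with the exchangeability of distinct pairs under uniform sampling. Everything else is routine algebra.
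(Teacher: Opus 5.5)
Your proof is correct. The unbiasedness step via uniform marginals, the cross-term value $-\sigma^2/(n-1)$ obtained from the centering $\sum_p Y^p=0$ and the uniformity of ordered distinct pairs, and the final algebra all check out. The paper gives no proof of its own; it restates this lemma from \cite[Lemma 1]{mishchenko2020random}, whose proof is this same direct second-moment computation, so your argument matches the intended one.
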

Even if the set is convex in the dual space, its image in the primal space can be nonconvex. This prevents us from applying standard estimates for Bregman distances (e.g., \cite[Fact 2.5]{latafat2022bregman}) on the \emph{nonconvex} set $\cX$. In the following, we establish key estimates used throughout the analysis.

\section{Verification of DKC  --- Proof of \Cref{prop kernel}}\label{app:kernel}

\subsection{Boltzmann-Shannon entropy}
Note that the Boltzmann-Shannon entropy  $h(x)=\sum_{i=1}^d x_i\log(x_i)$ is separable with each block being the $i$-th coordinate $x_i$. We only need to verify the kernel conditioning for each  $h_i(z):=z\log(z)$. Note that for any points $z, z'>0$, their distance in the dual space is given by 
$$\rho_{h_i}(z,z^\prime) = |\log(z/z^\prime)|.$$
Since $h_i^{\prime\prime}(z) = \frac{1}{z}$, for any compact set $\cX_i \subseteq \idom{h_i}=(0,+\infty)$, the condition  number over $\cX_i$ is 
\[
\kappa_i(\cX_i) = \sup_{z,z' \in \cX_i} \max\left\{ \frac{z}{z'}, \frac{z'}{z} \right\} = \sup_{z, z' \in \cX_i} \exp\big\{|\log(z/z')|\big\} = \exp \left(\diam_{h_i}(\cX_i)\right).
\]
Consequently, for any constant $\delta>0$, any compact set $\cX_i \subseteq \idom{h_i}$ with $\diam_{h_i}(\cX_i)\leq \delta$ satisfies the kernel conditioning regularity assumption with $\kappa_i(\cX_i) \leq  \exp \left(\delta\right)$.\hfill $\Box$




\subsection{Regularized Burg's entropy}
Similar to the verification of Boltzmann-Shannon entropy, we can express Burg's entropy as
 $ h(x)= \sum_{i=1}^d - \log(x_i) + \frac{\sigma}{2}x_i^2$. Now consider $h_i(z)=-\log(z) + \frac{\sigma}{2}z^2$. Direct computation gives \[
 h_i^\prime(z) = -\frac{1}{z} + \sigma z \quad \text{and} \quad h_i^{\prime\prime}(z)=\frac{1}{z^2} + \sigma.
 \]
 Given any $\cX_i\subseteq \idom{h_i}=(0,\infty)$ with $\diam_{h_i}(\cX_i)\leq \delta$. For any points $y,z\in\cX_i$, it follows from $\sigma$-strong convexity of $h_i$ that 
 $$ \big[h_i^\prime(y)-h_i^\prime(z) \big] (y-z) \leq  \frac{\rho_{h_i}^2(y,z)}{\sigma} \leq \frac{\delta^2}{\sigma},$$
 which further implies 
 \begin{equation}
     \label{eq:verify Burg 1}
     \Big(\frac{1}{z}-\frac{1}{y}\Big)(y-z) + \sigma(y-z)^2 \leq \frac{\delta}{\sigma} \quad \Longrightarrow \quad \frac{y}{z} + \frac{z}{y} \leq 2 + \frac{\delta}{\sigma}.
 \end{equation}
 The condition number of $h_i$ on $\cX_i$ is computed by
 \[
 \kappa_i(\cX_i) = \sup_{y,z\in\cX_i} \max \bigg\{\frac{z^{-2}+\sigma}{y^{-2}+\sigma},\frac{y^{-2}+\sigma}{z^{-2}+\sigma} \bigg\}. 
 \]
For arbitrary $y,z\in\cX_i$, it holds that
\begin{align*}
    \frac{z^{-2}+\sigma}{y^{-2}+\sigma} = \frac{y^2/z^2 + \sigma y^2}{1+\sigma y^2} = 1+\frac{y^2/z^2 - 1}{1+\sigma y^2} \leq 1+\frac{y^2}{z^2} \leq \frac{\delta^2}{\sigma^2} + \frac{4\delta}{\sigma} + 5,
\end{align*}
where the last inequality is due to \eqref{eq:verify Burg 1}. Similarly, we can derive the same bound for $\frac{y^{-2}+\sigma}{z^{-2}+\sigma}$. Thus, $\kappa_i(\cX_i)\leq \frac{\delta^2}{\sigma^2} + \frac{4\delta}{\sigma} + 5$ whenever $\diam_{h_i}(\cX_i)\leq \delta$. \hfill $\Box$

\subsection{Fermi-Dirac entropy}
 To show that kernel conditioning of the second kind is valid, it suffices to demonstrate that $h_i(z):=z\log(z) + (1-z)\log(1-z)$ has bounded conditional number $\kappa_i$ for any $\cX_i\subseteq \idom{h_i} = (0,1)$ satisfying $\diam_{h_i}(\cX_i)\leq \delta$. For arbitrary points $y,z\in\cX_i$, we have 
    \begin{equation}
        \label{eq:kernel fermi 1}
        |h^\prime_i(z) - h^\prime_i(y)| \leq \delta \quad \Longleftrightarrow \quad  |\log(\tfrac{z}{1-z}) - \log(\tfrac{y}{1-y})| \leq \delta.
    \end{equation}
    Simple computation yields $h_i^{\prime\prime}(z) = \frac1x + \frac{1}{1-z} = \frac{1}{z(1-z)}$. Hence, it holds that
   \begin{equation}\label{eq:kernel fermi 2}
        \kappa_i(\cX_i)=\sup_{y,z\in\cX_i} \max\Big\{\frac{z(1-z)}{y(1-y)},\frac{y(1-y)}{z(1-z)}  \Big\} =\sup_{y,z\in\cX_i} \exp\{|\log(z(1-z)) - \log(y(1-y))|\}.
    \end{equation}
    Next, we show that $|\log(z(1-z)) - \log(y(1-y))|$ is bounded by a $\delta$-dependent constant given that \eqref{eq:kernel fermi 1} holds. We proceed by considering two main cases.\\[-1mm]

    \noindent\textbf{Case 1:} $|z-\frac12| \leq |y-\frac12|$. In this situation, $z(1-z) \geq y(1-y)$. Thus, 
    \begin{equation}\label{eq:kernel fermi 3}
        |\log(z(1-z)) - \log(y(1-y))| = \log(z(1-z)) - \log(y(1-y))= \log(\tfrac{z}{y}) + \log(\tfrac{1-z}{1-y}).
    \end{equation}
    We split this further into two sub-cases. \vspace{0.2cm}\\
    \noindent \textbf{Sub-case 1-1:} $z \leq y$. It follows that $\tfrac{y}{1-y} \geq \tfrac{z}{1-z}$. Then, \eqref{eq:kernel fermi 1} becomes
     \begin{equation}\label{eq:kernel fermi 4}
    \log(\tfrac{y}{1-y}) - \log(\tfrac{z}{1-z}) \leq \delta \quad \Longleftrightarrow \quad \log(\tfrac{1-z}{1-y}) + \log(\tfrac{y}{z})\leq  \delta,
    \end{equation}
    which, combined with \eqref{eq:kernel fermi 3}, leads to
    \[
    |\log(z(1-z)) - \log(y(1-y))| \leq 2\log(\tfrac{z}{y}) + \delta \leq \delta,
    \]
    where the last inequality is due to $\tfrac{z}{y} \leq 1$.\vspace{0.2cm}

     \noindent \textbf{Sub-case 1-2:} If $z > y$, then \eqref{eq:kernel fermi 1} can be written as
     \begin{equation}\label{eq:kernel fermi  5}
     \log(\tfrac{z}{1-z}) -  \log(\tfrac{y}{1-y}) \leq \delta \quad \Longleftrightarrow \quad \log(\tfrac{z}{y}) + \log(\tfrac{1-y}{1-z}) \leq  \delta.
     \end{equation}
     Combining this with \eqref{eq:kernel fermi 3} gives 
     $$|\log(z(1-z)) - \log(y(1-y))| \leq 2\log(\tfrac{1-z}{1-y})  + \delta.$$ 
     Since $y<z$, it holds that $\tfrac{1-z}{1-y} < 1$. Therefore,  $|\log(z(1-z)) - \log(y(1-y))| \leq \delta$.\\

     \noindent \textbf{Case 2:} $|z-\frac12| > |y-\frac12|$. In this case, $z(1-z) < y(1-y)$ and
    \begin{equation}\label{eq:kernel fermi 6}
        |\log(z(1-z)) - \log(y(1-y))| = \log(\tfrac{y}{z}) + \log(\tfrac{1-y}{1-z}).
    \end{equation}
    \noindent \textbf{Sub-case 2-1:} $z \leq y$. From \eqref{eq:kernel fermi 4} and \eqref{eq:kernel fermi 6}, we have
    \[
    |\log(z(1-z)) - \log(y(1-y))| \leq 2\log(\tfrac{1-y}{1-z}) + \delta \leq \delta.
    \]
    The last inequality is because $\tfrac{1-y}{1-z}\leq 1$.\vspace{0.2cm}

     \noindent \textbf{Sub-case 2-2:} $z > y$. Invoking the relations \eqref{eq:kernel fermi 5} and \eqref{eq:kernel fermi 6}, this results in
   \[
   |\log(z(1-z)) - \log(y(1-y))| \leq 2\log(\tfrac{y}{z}) + \delta \leq \delta.
   \]
%
   Summarizing all sub-cases, we deduce that $
   |\log(z(1-z)) - \log(y(1-y))| \leq \delta.$
    Consequently, from \eqref{eq:kernel fermi 2}, we obtain
   \[\kappa_i(\cX_i) =\sup_{y,z\in\cX_i} \exp\{|\log(z(1-z)) - \log(y(1-y))|\} \leq \exp(\delta).\]
This establishes that 
kernel conditioning of the Fermi-Dirac entropy. \hfill $\Box$

\subsection{Multi-block power kernel}
The kernel $h(x)=\frac{1}{r+2}\sum_{j=1}^m\|x\|_{[j]}^{r+2} + \frac12\|x\|^2$, $r\geq0$ has domain $\mathcal{Z}\equiv \Rd$ and can be written as \[h(x)={\sum}_{j=1}^m h_j(x_{[j]}) \quad \text{where} \quad \quad h_j(z):=\frac{1}{r+2}\|z\|^{r+2} + \frac12\|z\|^2,\]
 we thus consider the condition number of each $h_j$ over the set $\cX_j$ with bounded diameter in the sense that $\rho_{h_j}(y,z)=\|\nabla h_j(y) - \nabla h_j(z)\| \leq \delta$ for all $y,z\in\cX_j$ and for some $\delta>0$. Specifically, we want show that $\kappa_j(\cX_j)$ is upper bounded by some $\delta$-dependent constant.
Clearly, $h_j$ is $1$-strongly convex. Hence, for all $y,z\in\cX_j$ satisfying $\diam_{h_j}(\cX_j) \leq \delta$, we have 
\[
\|y-z\| \leq \rho_{h_j}(y,z) \leq \delta.
\]
The rest of proof is similar to \cite[Proposition 2.5]{zhang2024stochastic}.

Direct calculation shows $\nabla^2 h_j(z) =(\|z\|^r+1) \cdot I + r\|z\|^{r-2}\cdot zz^\top$. For all $z\in\cX_j$, it holds that $\lambda_{\min}(z)=\|z\|^r+1$ and $\lambda_{\max}(z)=(r+1)\|z\|^r+1$. Let $z\in\argmin_{u\in\cX_j}\|u\|$ and $y\in \argmax_{u\in\cX_j}\|u\|$,
\[
\kappa_{j}(\cX_j) = \frac{\lambda_{\max}(y)}{\lambda_{\min}(z)} = \frac{(r+1)\|y\|^r+1}{\|z\|^r+1}\leq \frac{(r+1)(\|z\|+\delta)^r+1}{\|z\|^r+1}.
\]
Hence, our goal boils down to finding the supremum of 
\[
\sup\bigg\{\frac{(r+1)(t+\delta)^r+1}{t^r+1}:t\geq0 \bigg\}.
\]
If $t\leq 1$, we have $\frac{(r+1)(t+\delta)^r+1}{t^r+1} \leq(r+1)(1+\delta)^r+1 $. On the other hand, when $t\geq 1$,
\begin{align*}
    \frac{(r+1)(t+\delta)^r+1}{t^r+1}  \leq \frac{(r+1)(t+\delta)^r}{t^r} = (r+1)(1+\delta/t)^r \leq (r+1)(1+\delta)^r.
\end{align*}
This completes the proof.\hfill $\Box$
 
\section{Proof of key estimates} 
\subsection{Proof of \Cref{lem:Lipschitz continuity-m}}\label{proof:lem:Lipschitz continuity-m}
\begin{proof}
Since $\cX_j$ is convex for all $j\in\intset{m}$, it follows from \cite[Lemma E.2]{zhang2024stochastic} that 
\begin{equation}\label{Lipschitz continuity-m 1}
     \frac{1}{\mu_j(\cX_j)}\cdot\|\nabla_{[j]}\; f(x) - \nabla_{[j]}\; f(y)\|^2 \leq 8\sL^2\kappa(\cX)  \sum_{s=1}^m \;\cL_s(\cX_s)\cdot\|x_{[s]}-y_{[s]}\|^2.
\end{equation}
By \Cref{lem:important bounds}, we have $\|x_{[s]}-y_{[s]}\|^2\leq \rho_{h_s}(x_{[s]},y_{[s]})^2/\mu_s(\cX_s)^2$ for all $s\in\intset{m}$, which, combined with  \eqref{Lipschitz continuity-m 1} and $\kappa_s(\cX_s) \leq \kappa(\cX)$, gives
\[
    \frac{1}{\mu_j(\cX_j)}\cdot\|\nabla_{[j]}\; f(x) - \nabla_{[j]}\; f(y)\|^2 \leq 8\sL^2\kappa(\cX)^2\cdot {\sum}_{s=1}^m \; \frac{1}{\mu_s(\cX_s)}\cdot\rho_{h_s}(x_{[s]},y_{[s]})^2.
\]
Summing the above from $j=1$ to $m$, we obtain
\begin{equation}\label{Lipschitz continuity-m 2}
{\sum}_{j=1}^m\; \frac{1}{\mu_j(\cX_j)}\cdot\|\nabla_{[j]} f(x) - \nabla_{[j]} f(y)\|^2 \leq 8m\cdot \sL^2\kappa(\cX)^2 {\sum}_{s=1}^m \; \frac{1}{\mu_s(\cX_s)}\cdot\rho_{h_s}(x_{[s]},y_{[s]})^2,
\end{equation}
which finishes the first part of the proof. 
Next, we proceed to prove that
\begin{equation}\label{Lipschitz continuity-m 2.5}
\sum_{j=1}^m \frac{\|\nabla_{[j]} f(x) - \nabla_{[j]} f(y)\|^2}{\mu_j(\cX_j)} \leq 8e(3\ln(R)+2)\cdot \sL^2\kappa(\cX)^2 \sum_{s=1}^m \frac{\rho_{h_s}(x_{[s]},y_{[s]})^2}{\mu_s(\cX_s)}
\end{equation}
with $R:=\frac{\max_j\mu_j(\cX_j)}{\min_j\mu_j(\cX_j)}$. When $R=1$, it holds that $\mu_i(\cX_i) = \mu_j(\cX_j) =\mu(\cX)$ for all $i,j\in\intset{m}$, then
\[
\sum_{j=1}^m \frac{\|\nabla_{[j]} f(x) - \nabla_{[j]} f(y)\|^2}{\mu_j(\cX_j)} = \frac{1}{\mu(\cX)} \|\nabla f(x) - \nabla f(y)\|^2.
\]
Thus, the bound \eqref{Lipschitz continuity-m 2.5} is immediate according to \Cref{lem:Lipschitz continuity}.

In what follows, we focus on $R>1$. 
Our strategy is to group those blocks that are close in terms of $\mu_j(\cX_j)$ and form a new block. 
Setting $\tau=\min\{e^{1/3},R\}$, we define indices sets $T_k\subseteq\intset{m}$ as 
\[
T_k:=\Big\{t\in\intset{m}:\tau^{k-1}\leq \frac{\mu_t(\cX_t)}{\min_j \mu_j(\cX_j)} < \tau^k \Big\}\quad \text{for all}\; 1\leq k \leq m^\prime \quad \text{where}\quad m^\prime:=\Big\lfloor \frac{\ln(R)}{\ln(\tau)} \Big\rfloor + 1
\]
and the new blocked components of $h$ as
\[
h_{T_k}(x_{[T_k]}) := {\sum}_{i\in T_k} h_i(x_{[i]}) \quad \text{for all}\; 1\leq k \leq m^\prime. 
\] 
Let $j^*\in T_k$ be such that $j^*=\argmax_{j\in T_k} \cL_j(\cX_j)$, then, by the construction of $T_k$, we have \[\kappa_{T_k}(\cX_{T_k})=\frac{\cL_{j^*}(\cX_{j^*})}{\min_{j\in T_k}\mu_{j}(\cX_j)} = \frac{\cL_{j^*}(\cX_{j^*})}{\mu_{j^*}(\cX_{j^*})}   \cdot \frac{\mu_{j^*}(\cX_{j^*})}{\min_{j\in T_k}\mu_{j}(\cX_j)} \leq \tau\cdot \kappa_j(\cX_{j^*}) \leq \tau\cdot \kappa(\cX). \] Note also that $h(x)=\sum_{k=1}^{m^\prime}h_{T_k}(x_{[T_k]})$, by mirroring the derivation of \eqref{Lipschitz continuity-m 2}, we obtain 
\begin{equation}
    \label{Lipschitz continuity-m 3}
    \sum_{j=1}^{m^\prime}\; \frac{1}{\mu_{T_j}(\cX_{T_j})}\cdot\|\nabla_{[T_j]} f(x) - \nabla_{[T_j]} f(y)\|^2 \leq 8m^\prime \tau^2 \cdot \sL^2\kappa(\cX)^2 \sum_{j=1}^{m^\prime}\; \frac{1}{\mu_{T_j}(\cX_{T_j})}\cdot\rho_{h_{T_j}}(x_{[T_j]},y_{[T_j]})^2.
\end{equation}
Moreover, it holds that 
\begin{equation}
    \label{Lipschitz continuity-m 4}
    \begin{aligned}
&\hspace{5mm}\sum_{j=1}^m \frac{1}{\mu_j(\cX_j)}\cdot\|\nabla_{[j]} f(x) - \nabla_{[j]} f(y)\|^2
 = \sum_{j=1}^{m^\prime} \sum_{s\in T_j} \frac{1}{\mu_s(\cX_s)}\cdot\|\nabla_{[s]} f(x) - \nabla_{[s]} f(y)\|^2\\ 
& \leq \sum_{j=1}^{m^\prime} \frac{1}{\min_{i\in T_j}\mu_i(\cX_i)}\sum_{s\in T_j} \|\nabla_{[s]} f(x) - \nabla_{[s]} f(y)\|^2 =\sum_{j=1}^{m^\prime} \frac{1}{\mu_{T_j}(\cX_{T_j})}\cdot\|\nabla_{[T_j]} f(x) - \nabla_{[T_j]} f(y)\|^2.
\end{aligned}
\end{equation}
Again, by the construction of $T_k$, we have
\begin{align}
    \label{Lipschitz continuity-m 5}
    \sum_{j=1}^{m^\prime} \frac{1}{\mu_{T_j}(\cX_{T_j})}\cdot\rho_{h_{T_j}}(x_{[T_j]},y_{[T_j]})^2  &\leq \sum_{j=1}^{m^\prime} \sum_{s\in T_j} \frac{\tau}{\mu_s(\cX_s)}\cdot\rho_{h_s}(x_{[s]},y_{[s]})^2\\
    & =  \tau \sum_{j=1}^m \frac{1}{\mu_j(\cX_j)}\cdot\rho_{h_j}(x_{[j]},y_{[j]})^2.\nonumber    
\end{align} 
Combining \eqref{Lipschitz continuity-m 3}--\eqref{Lipschitz continuity-m 5}, we obtain 
\[
{\sum}_{j=1}^m \;{\mu_j(\cX_j)}\cdot\|\nabla_{[j]} f(x) - \nabla_{[j]} f(y)\|^2 \leq 8m^\prime \tau^3 \cdot \sL^2\kappa(\cX)^2 {\sum}_{j=1}^m \;\frac{1}{\mu_j(\cX_j)}\cdot\rho_{h_j}(x_{[j]},y_{[j]})^2.
\]
Since $\tau=\min\{e^{1/3},R\}$ and $m^\prime \leq \ln(R)/\ln(\tau) + 1 \leq 2+ 3\ln(R)$, we have $
m^\prime \tau^3 \leq [2+3\ln(R)]\cdot e,$
which, along with the above estimate, shows \eqref{Lipschitz continuity-m 2.5}.
\end{proof}
\subsection{Proof of \cref{lem:preliminary descent}}
\label{proof:lem:preliminary descent}
It follows from the extended descent \eqref{eq:extended descent} that
\begin{align}\label{eq:first-descent-1} 
    f(x^{k+1}) - f(x^k) &\leq   \langle \nabla f(x^k) , x^{k+1} - x^k \rangle + \sL \cD(x^{k+1},x^k) \\
    &= \frac1n{\sum}_{i=1}^n \!\big \langle \nabla\! f_{\pi^k_i}(x^k) \!-\! \nabla\! f_{\pi^k_i}(y^{k,i}) , x^{k+1} - x^k \big \rangle \nonumber \\ &\quad + \frac{1}{n\alpha_k}{\sum}_{i=1}^n\! \langle  \alpha_k  \nabla\! f_{\pi^k_i}(y^{k,i}) , x^{k+1} - x^k \rangle + \sL \cD(x^{k+1},x^k).\nonumber
\end{align} 
By the update of RRMD, we have $\alpha_k \nabla f_{\pi^k_i}(y^{k,i}) = \nabla h(y^{k,i}) - \nabla h(y^{k,i+1})$. Thus,
\begin{align}
\label{eq:first-descent-1.5}
\sum_{i=1}^n \langle  \alpha_k \nabla f_{\pi^k_i}(y^{k,i}) , x^{k+1} - x^k \rangle &=  \langle \nabla h(x^k) - \nabla h(x^{k+1}) , x^{k+1} - x^k \rangle\\
&= - \cD(x^{k+1},x^k) -\cD(x^k,x^{k+1}),\nonumber
\end{align} 
where the last equation is by \Cref{lem:three point}. Note that $x^k,x^{k+1},y^{k,1},\ldots,y^{k,n} \in \cX^k$ due to \Cref{lem:region}, then we have for all $i\in\intset{n}$ that 
\begin{align}\label{eq:first-descent-2.5}
    &\hspace{0.43cm}\big \langle \nabla\! f_{\pi^k_i}(x^k) \!-\! \nabla f_{\pi^k_i}(y^{k,i}), x^{k+1} \!-\! x^k  \big\rangle \nonumber\\[1mm]
    &\leq \frac{n\alpha_k\kappa_{\delta} }{2\mu(\cX^k)} \big\|\nabla\! f_{\pi^k_i}(x^k) \!-\! \nabla\! f_{\pi^k_i}(y^{k,i}) \big\|^2 \!+\! \frac{\mu(\cX^k)\|x^{k+1}\!-\!x^k\|^2}{2n\alpha_k\kappa_{\delta}}   \\[1mm] 
    &\overset{\text{(i)}}{\leq} \frac{n\alpha_k\kappa_{\delta}  }{2\mu(\cX^k)} \big\| \nabla\! f_{\pi^k_i}(x^k) \!-\! \nabla f_{\pi^k_i}(y^{k,i}) \big\|^2 \!\!+\! \frac{\cD(x^k\!,x^{k+1}) \!+\! \cD(x^{k+1}\!,x^k)}{2n\alpha_k}, \nonumber
\end{align}
where (i) uses $\mu(\cX^k)\|x^k-x^{k+1}\|^2\leq \kappa_{\delta}\cdot [\cD(x^{k+1},x^k)+\cD(x^k,x^{k+1})]$ inferred from \Cref{lem:important bounds}. Combining inequalities \eqref{eq:first-descent-1}--\eqref{eq:first-descent-2.5} and the fact that $\sL \leq 1/(2n\alpha_k)$ proves this lemma. \hfill $\Box$

\subsection{Proof of \Cref{lem:valid complexity}}
\label{proof of lem:valid complexity}
By the update formula \eqref{eqn:rrmd-update} of $\{x^k\}$ and the definition \eqref{eq:station measure} of $\hat x^k$, 
\begin{equation}\label{eq:update proxy}
\nabla h(x^{k+1}) + \alpha_k{\sum}_{i=1}^n \nabla f_{\pi_i^k}(y^{k,i}) = \nabla h(x^k) = \nabla h(\hat x^{k}) + \alpha_k{\sum}_{i=1}^n \nabla f_{\pi_i^k}(x^k).
\end{equation}
To quantify the region of the surrogate iterate $\hat x^k$, we first show that $\hat x^k \in \cX^k$. Recall that $\|\nabla f_i(x)\|\leq \sG$ for all $x\in\cZ$ and $i\in\intset{n}$, we have $\rho_h(\hat x^{k},x^k) \leq n\alpha_k \sG \leq \delta/2$, where the last inequality uses $\alpha_k \leq \delta/(2n\sG)$. By definition of $\cX^k$, it follows $\hat x^k \in \cX^k$.

Since $(n\alpha_k)^{2} \cdot  \cG(x^k) = \cD(x^k,\hat x^k)$, we concentrate on the upper bound of $\cD(x^k,\hat x^k)$. 
Utilizing three points identity (\Cref{lem:three point}), we have
\begin{align*}
&\hspace{5mm}\cD(x^k,\hat x^k) - \cD(x^k, x^{k+1}) - \cD(x^{k+1},\hat x^k) 
=   \langle x^k-x^{k+1},\nabla h(x^{k+1})- \nabla h(\hat x^k) \rangle\\[2mm]
& 
\overset{\text{(i)}}{\leq}  \frac{\cL(\cX^k) }{2\kappa_{\delta}} \|x^k-x^{k+1}\|^2 + \frac{\kappa_{\delta}\cdot \rho_h^2(x^{k+1},\hat x^k)}{2\cL(\cX^k)} \overset{\text{(ii)}}{\leq}  \kappa_{\delta} \cdot \cD(x^k, x^{k+1}) + \kappa_{\delta}\cdot \cD(x^{k+1}, \hat x^k),
\end{align*}    
where (i) uses $2\iprod{a}{b} \leq \cL(\cX^k)  \|a\|^2/\kappa_{\delta} + \kappa_{\delta}\|b\|^2/\cL(\cX^k)$, and (ii) follows from $\|x^k-x^{k+1}\|^2 \leq 2\kappa_{\delta}\cD(x^k,x^{k+1})/\mu(\cX^k)$ and $\rho_h^2(x^{k+1},\hat x^k)\leq 2\cL(\cX^k) \cdot \cD(x^{k+1}, \hat x^k)$ due to $x^k,x^{k+1},\hat x^k\in\cX^k$ and \Cref{lem:important bounds}. Rearranging the above estimate and noting $\kappa_{\delta}\geq 1$, we deduce 
\begin{equation}
    \label{eq:bregman bound 1}
    \cD(x^k, \hat x^{k}) \leq 2\kappa_{\delta} \cdot \cD(x^k, x^{k+1}) + 2\kappa_{\delta} \cdot\cD(x^{k+1}, \hat x^k).
\end{equation}
On the other hand, it follows from $2\mu(\cX^k) \cD(x^{k+1},\hat x^k) \leq \|\nabla h(x^{k+1}) - \nabla h(\hat x^k)\|^2$ and \eqref{eq:update proxy} that
\[\cD(x^{k+1}, \hat x^k) \leq \frac{n\alpha_k^2}{2\mu(\cX^k)} \cdot {\sum}_{i=1}^n\;  \|\nabla f_{\pi_i^k}(y^{k,i}) - \nabla f_{\pi_i^k}(x^k)\|^2 =\frac{n\alpha_k \cdot \err_k}{\kappa_{\delta}}.\]
Merging this bound into \eqref{eq:bregman bound 1} and substituting $\cG(x^k) =   \cD(x^k,\hat x^k)/(n\alpha_k)^{2}$, we obtain
\[
\cG(x^k) = \frac{\cD(x^k,\hat x^k)}{(n\alpha_k)^{2}} \leq 2\kappa(\cX^k) \cdot \frac{\cD(x^k, x^{k+1})}{(n\alpha_k)^{2}} + \frac{2\err_k}{n\alpha_k}.
\]
Note also that $(n\alpha_k)^2\|\nabla f(x^k)\|^2 = \|\nabla h(x^k) - \nabla h(\hat x^k)\|^2 \leq 2\cL(\cX^k) \cdot \cD(x^k,\hat x^k)$, then
\[
 \frac{\|\nabla f(x^k)\|^2}{\mu(\cX^k)} \leq \frac{2\kappa(\cX^k) }{(n\alpha_k)^2}\cdot \cD(x^k,\hat x^k) = 2\kappa(\cX^k) \cdot \cG(x^k).
\]
Together with $\kappa(\cX^k)\leq \kappa_\delta$, we complete the proof. \hfill $\Box$

\subsection{Proof of \Cref{lem:err bound}}\label{proof:lem:err bound} 
\noindent\textbf{Part I:} For arbitrary permutations $\{\pi^k\}_k$, as $x^k,y^{k,i}\in\cX^k$, \cref{lem:Lipschitz continuity,lem:region} indicate
\begin{align}\label{eq:err-est-1}  
\|\nabla f_{\pi_{i}^k}(y^{k,i}) - \nabla f_{\pi_{i}^k}(x^k) \|^2 & \leq  \kappa_\delta \sL^2  \cdot \rho_h(y^{k,i},x^k)^2 = \kappa_\delta \sL^2 \cdot \alpha_k^2   \Big\|{\sum}_{j=1}^{i-1} \nabla f_{\pi_j^k}(y^{k,j}) \Big\|^2\\
& \leq  \kappa_\delta \sL^2 \cdot \alpha_k^2   n{\sum}_{j=1}^{n}  \|\nabla f_{\pi_j^k}(y^{k,j})\|^2 \leq   n^2 \alpha_k^2  \sL^2 G^2 \kappa_\delta. \nonumber
\end{align}
By $\mu$-strong convexity of $h$, we prove
$\err_k \leq \frac{\kappa_\delta\cdot \alpha_k}{2\mu}{\sum}_{i=1}^n \| \nabla f_{\pi^k_i}(x^{k}) - \nabla f_{\pi^k_i}(y^{k,i}) \|^2 \leq \frac{\kappa_\delta^2 \sL^2 \sG^2}{2\mu} \cdot n^3 \alpha_k^3$.
    
\noindent\textbf{Part II:} Using the notation $\Exp_k[\cdot]:=\Exp[\cdot\mid\mathcal{F}_k]$, for permutation $\{\pi^k\}$ generated via uniform shuffling scheme, taking conditional expectation of the first row of \eqref{eq:err-est-1} gives 
\begin{equation}\label{eq:exp est 1}
     \begin{aligned}
     &\hspace{5mm}\Exp_k\big[\|\nabla f_{\pi_{i}^k}(x^k) - \nabla f_{\pi_{i}^k}(y^{k,i})\|^2 \big] \leq \kappa_\delta \sL^2\alpha_k^2 \cdot \Exp_k\Big[ \big\|{\sum}_{j=1}^{i-1} \nabla f_{\pi_j^k}(y^{k,j}) \big\|^2 \Big]\\[2mm]
     &= \kappa_\delta \sL^2 \alpha_k^2 \cdot  \Exp_k\Big[\big\|{\sum}_{j=1}^{i-1}
     \big[ \nabla f_{\pi_j^k}(y^{k,j}) - \nabla f_{\pi_j^k}(x^k) + \nabla f_{\pi_j^k}(x^k) - \nabla f(x^k)+ \nabla f(x^k) \big]\big\|^2\Big]\\[2mm]
     &\leq 2\kappa_\delta \sL^2\alpha_k^2 \cdot i\;{\sum}_{j=1}^n\Exp_k\big[\|\nabla f_{\pi_j^k}(y^{k,j}) - \nabla f_{\pi_j^k}(x^k) \|^2\big] + 4\kappa_\delta \sL^2\alpha_k^2 \cdot  i^2\|\nabla f(x^k)\|^2\\& \quad + 4\kappa_\delta \sL^2\alpha_k^2\cdot  \Exp_k\Big[\big\|{\sum}_{j=1}^{i-1} \nabla f_{\pi_j^k}(x^k) - \nabla f(x^k)\big\|^2 \Big] ,
 \end{aligned}
\end{equation}
where the last line simply utilizes $(a+b+c)^2\leq 2a^2+4b^2+4c^2$.
Leveraging the variance bound of sampling without replacement (\Cref{lem:sample-without-re}), we have 
\begin{equation}\label{eq:exp est 1.5}
\begin{aligned}
&\hspace{5mm} \Exp_k\Big[\big\|{\sum}_{j=1}^{i-1} \nabla f_{\pi_j^k}(x^k) - \nabla f(x^k)\big\|^2 \Big]  \\[2mm]
&= \frac{(i-1)(n-i+1)}{n-1} \cdot \frac1n\;{\sum}_{j=1}^n\|\nabla f_j(x^k) - \nabla f(x^k)\|^2 \\[2mm]
&\leq {\sum}_{j=1}^n\|\nabla f_j(x^k) - \nabla f(x^k)\|^2\\[2mm]
&={\sum}_{j=1}^n \|\nabla f_j(x^k)\|^2 - 2\Big\langle {\sum}_{j=1}^n \nabla f_j(x^k),\nabla f(x^k) \Big\rangle +n \|\nabla f(x^k)\|^2 \leq {\sum}_{j=1}^n \|\nabla f_j(x^k)\|^2, 
\end{aligned}
\end{equation}
where the last inequality is by $ {\sum}_{j=1}^n \!\nabla f_j(x) \!=\! n\nabla f(x)$. 
%
%
Inserting \eqref{eq:exp est 1.5} to \eqref{eq:exp est 1}, 
telescoping the resulting estimate for $i=1,\cdots,n$, and using $\sum_{i=1}^{n-1} i \leq n^2/2$, $\sum_{i=1}^{n-1}i^2 = n(n-1)(2n-1)/6 \leq n^3/2$ gives
\begin{align*}
    {\sum}_{i=1}^n\, \Exp_k\Big[\|\nabla f_{\pi_{i}^k}(x^k) - \nabla f_{\pi_{i}^k}(y^{k,i})\|^2 \Big]  
    &\leq \frac{2n\kappa_\delta \sL^2\alpha_k^2}{1 - \kappa_\delta \sL^2n^2\alpha_k^2}  \Big( n^2 \|\nabla f(x^k)\|^2 + 2 {\sum}_{j=1}^n \|\nabla f_j(x^k)\|^2\Big)\\
    &\leq 4n\kappa_\delta \sL^2\alpha_k^2\Big( n^2 \|\nabla f(x^k)\|^2 + 2 {\sum}_{j=1}^n \|\nabla f_j(x^k)\|^2\Big),
\end{align*} 
where the last line is by $\kappa_\delta \sL^2 n^2\alpha_k^2 \leq 1/2$.
In view of the definition of $\err_k$, we have 
\begin{align*}
\Exp_k[\err_k]&=\frac{\kappa_\delta \alpha_k}{2\mu(\cX^k)}{\sum}_{i=1}^n \Exp_k\Big[\| \nabla f_{\pi^k_i}(x^{k}) - \nabla f_{\pi^k_i}(y^{k,i}) \|^2 \Big]\\[1mm]
&\leq  2n\kappa_\delta^2 \sL^2\alpha_k^3\Big( n^2  \|\nabla f(x^k)\|^2/\mu(\cX^k) + [2/\mu(\cX^k)] {\sum}_{j=1}^n \|\nabla f_j(x^k)\|^2\Big)\\[1mm]
&\leq 4n\kappa_\delta^2\sL^2\alpha_k^3\Big( n^2\kappa_\delta \cdot \cG(x^k) + [\mu(\cX^k)]^{-1} {\sum}_{j=1}^n \|\nabla f_j(x^k)\|^2\Big),
\end{align*}
where the last line follows from $\|\nabla f(x^k)\|^2/\mu(\cX^k) \leq 2\kappa_\delta \cdot \cG(x^k)$ (see \Cref{lem:valid complexity}). 
The rest of proof is to substitute $[\mu(\cX^k)]^{-1}{\sum}_{j=1}^n \|\nabla f_j(x^k)\|^2 \leq n\sG^2/\mu$ into the above estimate.\hfill $\Box$

\subsection{Proof of \Cref{lem:err bound multi}}\label{proof:lem:err bound multi}
\begin{proof}
    Using \Cref{lem:Lipschitz continuity-m} and the fact $\kappa(\cX^k)\leq \kappa_\delta$ by \Cref{lem:region}, we obtain for all $i\in\intset{n}$ that
\begin{equation}\label{eq:multi block 1}
\begin{aligned}
{\sum}_{j=1}^m\frac{\|\nabla_{[j]} \;f_{\pi^k_i}(x^k) - \nabla_{[j]} f_{\pi^k_i}(y^{k,i})\|^2}{\mu_j(\cX^k_j)} &\leq \Gamma^2(\cX^k)\sL^2 \kappa_\delta^2{\sum}_{j=1}^m \frac{\rho_{h_j}^2(x^k_{[j]},y^{k,i}_{[j]})}{\mu_j(\cX^k_j)} \\ &\overset{\text{(i)}}{=} \Gamma^2(\cX^k)\sL^2 \kappa_\delta^2\alpha_k^2{\sum}_{j=1}^m \frac{1}{\mu_j(\cX^k_j)}\Big\| {\sum}_{t=1}^{i-1} \, \nabla_{[j]} \;f_{\pi^k_t}(y^{k,t})\Big\|^2,
\end{aligned}
\end{equation}
where (i) holds due to the update rule of RRMD. Utilizing the $\mu$-strong convexity of $h$ (hence $\mu_j(\cX_j^k) \geq \mu$) and the relation ${\sum}_{j=1}^m\|x_{[j]}\|^2=\|x\|^2$, we obtain
\[
{\sum}_{j=1}^m\frac{\|\nabla_{[j]} \;f_{\pi^k_i}(x^k) - \nabla_{[j]} \; f_{\pi^k_i}(y^{k,i})\|^2}{\mu_j(\cX^k_j)} \leq  \frac{\Gamma^2(\cX^k)\sL^2 \kappa_\delta^2 \alpha_k^2}{\mu} \cdot \Big\| {\sum}_{t=1}^{i-1} \, \nabla f_{\pi^k_t}(y^{k,t})\Big\|^2.
\]
By $\|\nabla f_i(x)\|\leq\sG$ for all $x\in\cZ$ and $i\in\intset{n}$, we show (a). By adding and subtracting, we rewrite \eqref{eq:multi block 1} as
\begin{align*}
&\hspace{3.5mm}\sum_{j=1}^m\frac{1}{\mu_j(\cX^k_j)} \|\nabla_{[j]} \;f_{\pi^k_i}(x^k) - \nabla_{[j]} \; f_{\pi^k_i}(y^{k,i})\|^2 \\
& \leq \Gamma^2(\cX^k)\sL^2 \kappa_\delta^2\alpha_k^2\sum_{j=1}^m \frac{1}{\mu_j(\cX^k_j)}\Big\| {\sum}_{t=1}^{i-1} \, \nabla_{[j]} \;f_{\pi^k_t}(y^{k,t})\Big\|^2 \\
&= \sum_{j=1}^m \frac{\Gamma^2(\cX^k)\sL^2 \kappa_\delta^2\alpha_k^2}{\mu_j(\cX^k_j)}\Big\| {\sum}_{t=1}^{i-1} \big[\, \nabla_{[j]} \;f_{\pi^k_t}(y^{k,t}) - \, \nabla_{[j]} \;f_{\pi^k_t}(x^k) + \, \nabla_{[j]} \;f_{\pi^k_t}(x^k) - \nabla_{[j]} \;f(x^k) + \nabla_{[j]} \;f(x^k) \big]\Big\|^2 \\
&\overset{\text{(i)}}{\leq} \Gamma^2(\cX^k)\sL^2 \kappa_\delta^2\alpha_k^2\sum_{j=1}^m \frac{1}{\mu_j(\cX^k_j)} \cdot \Big(2\Big\| {\sum}_{t=1}^{i-1} \, \nabla_{[j]} \;f_{\pi^k_t}(y^{k,t}) - \, \nabla_{[j]} \;f_{\pi^k_t}(x^k)\Big\|^2+ 4i^2\big\| \nabla_{[j]} \;f(x^k) \big\|^2\Big) \\ &\quad + \frac{4\Gamma^2(\cX^k)\sL^2 \kappa_\delta^2\alpha_k^2}{\mu} \Big\| {\sum}_{t=1}^{i-1} \, \nabla f_{\pi^k_t}(x^k) - \nabla f(x^k)\Big\|^2  \\
&\leq 2\Gamma^2(\cX^k) n\sL^2 \kappa_\delta^2\alpha_k^2 \sum_{t=1}^{n}  \sum_{j=1}^m \frac{1}{\mu_j(\cX^k_j)} \|\nabla_{[j]} \;f_{\pi^k_t}(y^{k,t}) - \, \nabla_{[j]} \;f_{\pi^k_t}(x^k)\|^2 \\ &\quad + 4i^2\cdot \Gamma^2(\cX^k) \sL^2 \kappa_\delta^2\alpha_k^2 \sum_{j=1}^m \frac{\big\| \nabla_{[j]} \;f(x^k) \big\|^2}{\mu_j(\cX^k_j)}  + \frac{4\Gamma^2(\cX^k)\sL^2 \kappa_\delta^2\alpha_k^2}{\mu}\cdot \Big\| {\sum}_{t=1}^{i-1} \, \nabla f_{\pi^k_t}(x^k) - \nabla f(x^k)\Big\|^2,
\end{align*}
where (i) uses $(a+b+c)^2 \leq 2a^2 + 4b^2 + 4c^2$, $\mu_j(\cX^k_j)\geq\mu$, and ${\sum}_{j=1}^m\|x_{[j]}\|^2=\|x\|^2$. Summing the above estimate over $i=1,\ldots,n$, rearranging, and using $\sum_{i=1}^{n-1}i^2\leq n^3/2$, then
\begin{equation}\label{eq:multi block 2}
    \begin{aligned}
    &\hspace{5mm}\big[1-2\Gamma^2(\cX^k)\sL^2 \kappa_\delta^2n^2\alpha_k^2\big]   {\sum}_{i=1}^{n}  {\sum}_{j=1}^m \frac{1}{\mu_j(\cX^k_j)} \|\nabla_{[j]} \;f_{\pi^k_i}(y^{k,i}) - \, \nabla_{[j]} \;f_{\pi^k_i}(x^k)\|^2 \\&\leq 4\Gamma^2(\cX^k)\sL^2 \kappa_\delta^2 \alpha_k^2\cdot \Big(\mu^{-1}{\sum}_{i=1}^n \Big\| {\sum}_{t=1}^{i-1} \, \nabla f_{\pi^k_t}(x^k) - \nabla f(x^k)\Big\|^2 + \frac{n^3}{2} {\sum}_{j=1}^m\frac{\big\| \nabla_{[j]} \;f(x^k) \big\|^2}{\mu_j(\cX^k_j)} \Big).
\end{aligned}
\end{equation}
Next, we take conditional expectation $\Exp_k[\cdot]$ of \eqref{eq:multi block 2} and note that $[1-2\Gamma^2(\cX^k)\sL^2 \kappa_\delta^2n^2\alpha_k^2]  \geq \frac12$, then
\begin{equation}\label{eq:multi block 3}
   \begin{aligned}
    &\hspace{5mm}  {\sum}_{i=1}^{n}  {\sum}_{j=1}^m \frac{1}{\mu_j(\cX^k_j)} \cdot \Exp_k\big[\|\nabla_{[j]} \;f_{\pi^k_i}(y^{k,i}) - \, \nabla_{[j]} \;f_{\pi^k_i}(x^k)\|^2\big] \\&\leq 8\Gamma^2(\cX^k)\sL^2 \kappa_\delta^2 \alpha_k^2 \cdot \Big(\mu^{-1}{\sum}_{i=1}^n \Exp_k\Big[\Big\| {\sum}_{t=1}^{i-1} \, \nabla f_{\pi^k_t}(x^k) - \nabla f(x^k)\Big\|^2 \Big]+ \frac{n^3}{2} {\sum}_{j=1}^m\frac{\big\| \nabla_{[j]} \;f(x^k) \big\|^2}{\mu_j(\cX^k_j)}\Big)\\
    &\leq \frac{8\Gamma^2(\cX^k)\kappa_\delta^2\sL^2\sG^2}{\mu} \cdot n^2\alpha_k^2 + 4\Gamma^2(\cX^k)\sL^2 \kappa_\delta^2 n^3\alpha_k^2 \;{\sum}_{j=1}^m\frac{\big\| \nabla_{[j]} \;f(x^k) \big\|^2}{\mu_j(\cX^k_j)},
\end{aligned} 
\end{equation}
where the last line uses \eqref{eq:exp est 1.5} and $\|\nabla f_i(x)\|\leq \sG$. 
According to the definition of stationarity measure $\cG(x^k)$ and $\hat x^k$ in \eqref{eq:station measure}, it holds that
\[
{\sum}_{j=1}^m\frac{\big\| \nabla_{[j]} \;f(x^k) \big\|^2}{\mu_j(\cX^k_j)} = {\sum}_{j=1}^m\frac{\rho_{h_j}(x^k_{[j]},\hat x^k_{[j]})^2}{\mu_j(\cX^k_j)\cdot (n\alpha_k)^2} \leq \frac{2\kappa_\delta}{(n\alpha_k)^2} {\sum}_{j=1}^m{\cal D}_{h_j}(x^k_{[j]},\hat x^k_{[j]}) = 2\kappa_\delta \cdot \cG(x^k),
\]
where the inequality follows from 
\Cref{lem:important bounds}. Inserting above relation into \eqref{eq:multi block 3} and recalling that 
\[
\Exp_k[\cE_k]=\frac{\kappa_\delta \cdot \alpha_k}{2} {\sum}_{i=1}^n {\sum}_{j=1}^m \frac{1}{\mu_j(\cX^k_j)} \cdot \Exp_k[\| \nabla_{[j]}\, f_{\pi^k_i}(x^{k}) - \nabla_{[j]}\, f_{\pi^k_i}(y^{k,i}) \|^2],
\]
we thus finalize the proof.
\end{proof}

\bibliographystyle{siam}
\bibliography{references}

\end{document}